\newcommand{\abs}[1]{| #1 |}
\newcommand{\norm}[1]{\| #1 \|}
\newcommand{\Abs}[1]{\left\lvert#1\right\rvert}
\newcommand{\ol}[1]{\overline{#1}}
\DeclareMathOperator{\Spec}{Spec}
\DeclareMathOperator{\vol}{vol}
\DeclareMathOperator{\ord}{ord}
\renewcommand{\and}{\quad \text{ and } \quad}
\newcommand{\codim}{{\operatorname{codim}}}
\newcommand{\h}{{\operatorname{h}}}
\renewcommand{\c}{{\operatorname{c}}}
\DeclareMathOperator{\e}{e}
\DeclareMathOperator{\K}{K}
\DeclareMathOperator{\an}{an}
\DeclareMathOperator{\tr}{tr}
\DeclareMathOperator{\Proj}{Proj}
\newcommand{\fM}{\mathfrak{M}}
\newcommand{\Gm}{\mathbb{G}_{\mathrm{m}}}
\newcommand{\Gal}{{\operatorname{Gal}}}
\let\div\relax
\DeclareMathOperator{\div}{div}
\newcommand{\NN}{{\mathbb{N}}}
\newcommand{\PP}{{\mathbb{P}}}
\newcommand{\QQ}{{\mathbb{Q}}}
\newcommand{\RR}{{\mathbb{R}}}
\newcommand{\ZZ}{{\mathbb{Z}}}
\def\fM {{\mathfrak M}}
\def\cO {{\mathcal O}}
\def\scrL {{\mathscr{L}}}
\def\scrM {{\mathscr{M}}}
\def\scrN {{\mathscr{N}}}
\def\scrO {{\mathscr{O}}}
\def\scrX {{\mathscr{X}}}
\def\scrY {{\mathscr{Y}}}
\newcommand{\bfzeta}{{\boldsymbol{\zeta}}}
\def\subsection{\@startsection{subsection}{2}%
  \z@{.5\linespacing\@plus.7\linespacing}{-.5em}%
   { \bfseries}}
\newcounter{thm}
\theoremstyle{definition}
\newtheorem{definition}[thm]{Definition}
\newtheorem{remark}[thm]{Remark}
\newtheorem{example}[thm]{Example}
\newtheorem{question}[thm]{Question}
\newtheorem{assumption}[thm]{Assumption}
\theoremstyle{plain}
\newtheorem{lemma}[thm]{Lemma}
\newtheorem{proposition}[thm]{Proposition}
\newtheorem{theorem}[thm]{Theorem}
\newtheorem{corollary}[thm]{Corollary}
\newtheorem{prop-def}[thm]{Proposition-Definition}
\begin{document}

\title{Higher dimensional essential minima and equidistribution of cycles}
\author[Gualdi]{Roberto~Gualdi}
\address{Fakult\"at f\"ur Mathematik, Universit\"at Regensburg, 93040 Regensburg,Germany.}
\email{roberto.gualdi@ur.de}
\urladdr{\url{https://homepages.uni-regensburg.de/~gur23971/}}
\author[Mart\'inez]{C\'esar Mart\'inez}
\address{Fakult\"at f\"ur Mathematik, Universit\"at Regensburg, 93040 Regensburg,Germany.}
\email{cesar.martinez@ur.de}
\urladdr{\url{https://cesar-martinez-math.weebly.com}}

\keywords{equidistribution of cycles, Arakelov geometry, heights, essential minimum}
\subjclass[2010]{Primary 14G40; Secondary 	11G35, 14C25}
\date{\today}
\thanks{This work was supported by the SFB 1085 ``Higher Invariants'' funded by the DFG. The first author was also partially supported by the ``Fundació Ferran Sunyer i Balaguer'' and by the ``Alexander von Humboldt Foundation''}

\begin{abstract}
The essential minimum and equidistribution of small points are two well-established interrelated subjects in arithmetic geometry.
However, there is lack of an analogue of essential minimum dealing with higher dimensional subvarieties, and the equidistribution of these is a far less explored topic.

In this paper, we introduce a new notion of higher dimensional essential minimum and use it to prove equidistribution of generic and small effective cycles.
The latter generalizes the previous higher dimensional equidistribution theorems
by considering cycles and by allowing more flexibility on the arithmetic datum.
\end{abstract}
\maketitle

\setcounter{tocdepth}{1}
\tableofcontents

\vspace*{-8mm}

%
%
%
%
%
%
%
%
%
%
%
%
%
%
%
%
%
%
%
%
%
%
%
%
%
%
%
%
%
%
%
%
%
%
%
%
%
%

\section*{Introduction}

The well known Faltings's theorem is one of the most celebrated examples of the motto according to which the geometry of a variety governs its arithmetic.
Equidistribution phenomena represent an instance of the converse influence and show how certain arithmetic properties of a sequence of subvarieties prescribe their limit geometrical behaviour.
In addition to its intrinsic beauty, equidistribution theory has proven to be a key ingredient in classical diophantine problems;
most notably the proof of Bogomolov's conjecture in~\cite{Ullmo:Bogomolov} and~\cite{Zhang:Bogomolov}, which has inspired function field analogues and  generalizations for number fields (see for instance \cite{Yamaki:survey} and \cite{Chambert-Loir:Bogomolov-survey}, and the reference therein)

The first appearance of equidistribution in Arakelov geometry is due to Szpiro, Ullmo and Zhang in their cornerstone paper \cite{SUZ}.
This work inspired a lot of progress in the following years: using totally different techniques, Bilu proved in~\cite{Bilu:1997} an analogous theorem for strict sequences of points in tori, while Chambert-Loir \cite{Chambert-Loir:mesures-equidistribution}, and Favre and Rivera-Letelier~\cite{Favre-Rivera:2006} extended the equidistribution result to non-archimedean places.

The most general form of the equidistribution theorem in varieties defined over number fields was given by Yuan in~\cite{Yuan:2008}.
The analogous result for varieties defined over function fields was later obtained by Gubler in \cite{Gubler:2008}.
To state them, let $X$ be a projective variety defined over a field $K$ as aforementioned,
and $\ol{L}$ be an ample line bundle on $X$ equipped with a semipositive metric.
This choice allows to define a suitable (normalized) height function $\widehat{\h}_{\ol{L}}$ on the set of algebraic cycles of~$X_{\ol{K}}$,
making use of Arakelov geometry, see~Subsection~1.B.
Similarly, it associates to every subvariety $Y$ of $X_{\ol{K}}$ a measure
\[
\c_1(\ol{L}_v)^{\wedge\dim(Y)}\wedge\delta_{Y_v^{\an}}
\]
on the analytification of $X$ at a place $v$ of~$K$.
If $(x_m)_m$ is a generic sequence of points in $X_{\ol{K}}$ such that
\begin{equation}\label{equation:intro-convergence}
\widehat{\h}_{\ol{L}}(x_m)\longrightarrow\widehat{\h}_{\ol{L}}(X),
\end{equation}
the equidistribution theorem of Yuan-Gubler asserts the weak convergence of probability measures on $X_v^{\an}$
\[
\frac{1}{\#O(x_m)}
\sum_{y\in O(x_m)}\delta_{y_{v}^{\an}}
\longrightarrow
\frac{1}{\deg_L(X)}\,\c_1(\ol{L}_v)^{\wedge\dim(X)}\wedge\delta_{X_v^{\an}}
\]
for every place $v$, where $O(x_m)$ denotes the Galois orbit of $x_m$ in~$X_{\ol{K}}$.

However, the existence of generic sequences of algebraic points satisfying \eqref{equation:intro-convergence} fails for general choices of $X$ and~$\ol{L}$;
in these cases the equidistribution statement is empty.
A convenient invariant to deal with this issue is the \emph{essential minimum} of~$X$, that is defined as
\begin{equation}\label{equation:classical-essential-minimum}
\e_1(X,\ol{L}):=\adjustlimits\sup_{H}\inf_{x\not\in H}\widehat{\h}_{\ol{L}}(x),
\end{equation}
where $H$ runs over all closed subsets of $X_{\ol{K}}$ of codimension~$1$.
It is the smallest limit value that the height of a generic net of points in $X_{\ol{K}}$ can attain, and it can be shown that
\begin{equation}\label{equation:intro-Zhang-inequality}
\e_1(X,\ol{L})\geq \widehat{\h}_{\ol{L}}(X),
\end{equation}
see for instance \cite[Theorem~5.2]{Zhang:1995}.
This is known as \emph{Zhang's inequality} and plays a significant role in equidistribution theory.
For instance, the statement of Yuan-Gubler's theorem is nonempty precisely when~\eqref{equation:intro-Zhang-inequality} is an equality.

However, even under a strict inequality in~\eqref{equation:intro-Zhang-inequality}, it may happen that a sequence of generic points whose height converges to the essential minimum equidistributes with respect to a certain relevant measure.

A first example of this behaviour is when $X$ is a toric variety and $\ol{L}$ is a toric metrized line bundle.
An exhaustive description of this situation was given by Burgos Gil, Philippon, Rivera-Letelier and Sombra in~\cite{Burgos-Philippon-Rivera-Sombra:2015}.
They showed that equidistribution holds for a large class of toric metrized line bundles for which~\eqref{equation:intro-Zhang-inequality} is not necessarily an equality,
and explicitly described the limit measure.

A second relevant case is the one of a semiabelian variety $X$ defined over a number field as studied by K\"uhne~\cite{Kuhne:2018}.
In this case, the essential minimum of $X$ vanishes, whereas $\widehat{\h}_{\ol{L}}(X)$ can be negative (if $X$ is non-split).

\vspace{\baselineskip}

Let us now consider the higher dimensional situation.
In~\cite{Yuan:2008} (and also implicitely in~\cite{Gubler:2008}) an extension of the equidistribution theorem for subvarieties is stated,
generalizing a previous result by Autissier~\cite{Autissier:2006} (see also~\cite{Baker-Ih:equidistribution} for the special case of the N\'eron-Tate height on abelian varieties).
Assume that the choice of the metric on $L$ satisfies the hypothesis
\begin{equation}\label{hypothesis *}
\widehat{\h}_{\ol{L}}(Y)\geq\widehat{\h}_{\ol{L}}(X)\ \text{ for every subvariety }Y \text{ of }X_{\overline{K}}.
\end{equation}
If $(Y_m)_m$ is a generic sequence of subvarieties of $X_{\overline{K}}$ of a fixed dimension such that
\begin{equation}\label{equation:intro-convergence-subvarieties}
\widehat{\h}_{\ol{L}}(Y_m)\longrightarrow\widehat{\h}_{\ol{L}}(X),
\end{equation}
the higher dimensional equidistribution theorem asserts that, for every place~$v$, the Galois average of the $v$-adic probability measures associated to $Y_m$ converges weakly to $\c_1(\ol{L})^{\wedge\dim(X)}\wedge\delta_{X_v^{\an}}/\deg_L(X)$.

The theorem relies on the fullfilment of \hyperref[hypothesis *]{hypothesis (\ref*{hypothesis *})}, which is an indispensable ingredient in the original proof of~\cite{Autissier:2006}.
Even if it holds in classical situations (such as the canonical height in toric varieties and the N\'eron-Tate height on abelian varieties), this assumption fails for general choices of $X$ and $\overline{L}$, see \hyperref[ex:failure of hypothesis *]{Example~\ref*{ex:failure of hypothesis *}} for an explicit construction.
Furthermore, as it happens for points, sequences of subvarieties satisfying \eqref{equation:intro-convergence-subvarieties} do not need to exist.
However, in contrast to the $0$-dimensional case, there is no appearance in the literature of a notion of essential minimum for higher dimensional subvarieties of~$X$.

The main goal in this paper is to deal with these two limitations.
In particular, give an equidistribution theorem that generalizes the one of Yuan to a situation where~\eqref{hypothesis *} is no longer needed,
and determine the cases in which this equidistribution theorem is nonempty by introducing a notion of higher dimensional essential minimum comparable to the classical one that suites this purpose.

\vspace{\baselineskip}

Let $K$ be a number field or the function field of a regular projective curve. Let $X$ be a projective variety defined over $K$ and $\ol{L}$ be a semipositive metrized line bundle $\ol{L}$ on $X$. We also fix $d=0,\ldots,\dim(X)$. 

We introduce in Definition~\ref{definition d-dimensional successive minima} a notion of higher essential minimum.
For simplicity in the introduction, let us assume that $L$ is ample.
In  this situation, \hyperref[prop:equivalent-def-ess-minimum]{Proposition~\ref*{prop:equivalent-def-ess-minimum}} allows the following equivalent definition.

\begin{definition}
The \emph{$d$-dimensional essential minimum of $X$ with respect to $\ol{L}$} is defined as
\[
\e_1^{(d)}(X,\ol{L})\,:=\;
\adjustlimits\sup_{H}
\inf_{Y\nsubseteq H}\,
\Bigg((d+1)\widehat{\h}_{\ol{L}}(Y)-
\inf_{\substack{s\in\mathrm{H}^0(X_{\ol{K}},L_{\ol{K}}^{\otimes n})\\ n\in\NN\setminus\lbrace 0\rbrace \\ Y\nsubseteq \abs{\div(s)}}}
d\,\widehat{\h}_{\ol{L}}(\div(s)\cdot Y)\Bigg).
\]
where $H$ runs over all closed subsets of $X_{\ol{K}}$ of codimension~$1$, and $Y$ over all $d$-dimensional subvarieties of~$X_{\ol{K}}$.
\end{definition}

The term in parenthesis in the above definition represents the highest gap between the height of $Y$ and the one of its divisors constructed from sections of powers of~$L$.
Then, by \hyperref[rmk:another equivalent definition]{Remark~\ref*{rmk:another equivalent definition}}, the $d$-dimensional essential minimum of $X$ can be seen as the minimal limit of such a highest ``height-gap'' for generic nets of $d$-dimensional subvarieties of~$X$.
When $d=0$, it agrees with the classical invariant defined in \eqref{equation:classical-essential-minimum}.

The explicit dependence of $\e_1^{(d)}(X,\ol{L})$ on the complete linear series of $L$ highlights the importance of understanding the arithmetic size of global sections of powers of~$L$.
In this perspective, we make use of the arguments of \cite{Yuan:2008} and \cite{Gubler:2008} to prove the existence of a global section of a tensor power of $L$ whose adelic norm is controlled by the height of~$X$. This allows us to deduce the following result, see \hyperref[cor:Zhangs-ineq]{Corollary \ref*{cor:Zhangs-ineq}}.

\begin{theorem}[Zhang's inequality]
We have~$\e_1^{(d)}(X,\ol{L})\geq\widehat{\h}_{\ol{L}}(X)$.
\end{theorem}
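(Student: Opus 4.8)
The plan is to read the inequality off the definition of $\e_1^{(d)}(X,\ol L)$, once we have at our disposal global sections of tensor powers of $L$ of controlled adelic norm, this last being exactly the input extracted from the arguments of \cite{Yuan:2008} and \cite{Gubler:2008}. Concretely, that input provides, for every $\varepsilon>0$, an integer $n\geq 1$ and a nonzero section $s\in\mathrm{H}^0(X_{\ol K},L_{\ol K}^{\otimes n})$ whose local sup-norms satisfy
\[
\frac{1}{n}\sum_v n_v\,\log\norm{s}_{v,\sup}\;\leq\;-\,\widehat{\h}_{\ol L}(X)+\varepsilon ,
\]
the sum running over the places $v$ of $K$ with their usual weights $n_v>0$. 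I first treat the case where $L$ is ample, so that the description of $\e_1^{(d)}(X,\ol L)$ recalled above applies; the general semipositive case then follows by a routine limiting argument, or directly from Definition~\ref{definition d-dimensional successive minima}.

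Fix $\varepsilon>0$ and such a section $s$, and set $H:=\abs{\div(s)}$, which is a closed subset of $X_{\ol K}$ of codimension~$1$ since $\deg_L(X)>0$. Let $Y$ be any $d$-dimensional subvariety of $X_{\ol K}$ with $Y\nsubseteq H$. Then $s$ is one of the sections admissible in the inner infimum appearing in the definition of $\e_1^{(d)}(X,\ol L)$, so the quantity in parentheses attached to $Y$ is at least $(d+1)\,\widehat{\h}_{\ol L}(Y)-d\,\widehat{\h}_{\ol L}(\div(s)\cdot Y)$. I would then use the arithmetic projection formula relating the heights of $Y$ and of $\div(s)\cdot Y$ --- equivalently, the recursion defining $\widehat{\h}_{\ol L}$ through local heights --- which, after dividing by $n\deg_L(Y)>0$, takes the form
\[
(d+1)\,\widehat{\h}_{\ol L}(Y)-d\,\widehat{\h}_{\ol L}(\div(s)\cdot Y)\;=\;-\,\frac{1}{n\,\deg_L(Y)}\sum_v n_v\int_{Y_v^{\an}}\log\norm{s}_v\;\c_1(\ol L_v)^{\wedge d}.
\]
Since $\ol L$ is semipositive, each $\c_1(\ol L_v)^{\wedge d}\wedge\delta_{Y_v^{\an}}$ is a positive measure of total mass $\deg_L(Y)$; bounding $\log\norm{s}_v$ pointwise by $\log\norm{s}_{v,\sup}$ and inserting the estimate on $s$ above yields
\[
(d+1)\,\widehat{\h}_{\ol L}(Y)-d\,\widehat{\h}_{\ol L}(\div(s)\cdot Y)\;\geq\;-\,\frac{1}{n}\sum_v n_v\log\norm{s}_{v,\sup}\;\geq\;\widehat{\h}_{\ol L}(X)-\varepsilon .
\]

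Combining the last two displays, the quantity in parentheses in the definition of $\e_1^{(d)}(X,\ol L)$ is at least $\widehat{\h}_{\ol L}(X)-\varepsilon$ for every $d$-dimensional $Y\nsubseteq H$; taking the infimum over all such $Y$, and then the supremum over all codimension-one closed subsets (of which $H$ is one), gives $\e_1^{(d)}(X,\ol L)\geq\widehat{\h}_{\ol L}(X)-\varepsilon$, and letting $\varepsilon\to 0$ settles the ample case. I do not expect a genuine difficulty in this deduction itself: the single substantive ingredient is the small-section statement, which is exactly where the Yuan--Gubler machinery --- arithmetic Hilbert--Samuel together with Minkowski's theorem, or its function-field counterpart --- is needed, and which I would establish beforehand. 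The only points demanding care in the write-up are matching normalizations (of the weights $n_v$, of $\widehat{\h}_{\ol L}$ and $\deg_L$, and of the Green currents) so that the displayed arithmetic projection formula holds exactly as stated, and checking that replacing a section defined over $K$ by its base change over $\ol K$ leaves the adelic norm estimate unchanged, by Galois invariance of the sup-norms.
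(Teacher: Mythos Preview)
Your proposal is correct and follows essentially the same route as the paper's proof of Corollary~\ref{cor:Zhangs-ineq}: both invoke the Key inequality (Theorem~\ref{thm:key-lemma}, with $\ol{\mathscr{O}_X}$ trivially metrized and $t$ small) to produce a section $s$ of controlled adelic sup-norm, bound $I_{\ol L}(Y,s)/\deg_L(Y)$ below by $\widehat{\h}_{\ol L}(X)-\varepsilon$ via the positivity of the Chambert-Loir measures, and conclude through the equivalent description of $\e_1^{(d)}(X,\ol L)$ in Proposition~\ref{prop:equivalent-def-ess-minimum}. Your write-up makes the measure-theoretic step more explicit than the paper does, but the argument is the same; your reduction to the ample case is unnecessary (Proposition~\ref{prop:equivalent-def-ess-minimum} already applies for $L$ big and nef, with the harmless proviso that one takes $H\cup H_0$ rather than just $H$), so the ``routine limiting argument'' you mention can be dropped.
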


This theorem is the precise reason why we could not take a naive definition of higher essential minimum, which only involves the infimum value of (normalized) heights of generic subvarieties of a fixed dimension.
Indeed, the inequality of this theorem may fail with this alternative definition, see for instance \hyperref[ex:failure of hypothesis *]{Example~\ref*{ex:failure of Zhang ineq}}.

Having established the definition of $d$-dimensional essential minimum, we study its connection with equidistribution phenomena.
For this, let $(Y_m)_m$ be a net of $d$-dimensional subvarieties of $X_{\ol{K}}$.

\begin{definition}\label{def introduction generic and small}
The net $(Y_m)_m$ is said to be \emph{generic} if for every closed subset $H$ of $X_{\overline{K}}$ of codimension~$1$,  $Y_m\nsubseteq H$ for all $m$ big enough.
It is called \emph{$\ol{L}$-small} if
\[
\lim_m\,
\Bigg((d+1)\widehat{\h}_{\ol{L}}(Y_m)-\inf_{\substack{s\in\mathrm{H}^0(X_{\ol{K}},L_{\ol{K}}^{\otimes n})\\Y_m\nsubseteq |\div(s)|}}d\,\widehat{\h}_{\ol{L}}(\div(s)\cdot Y_m)\Bigg)
=\e_1^{(d)}(X,\ol{L}).
\]
\end{definition}

The notion of smallness is novel, as it is related to the higher dimensional essential minimum defined above.
Loosely speaking, generic $\ol{L}$-small nets of subvarieties are the ones for which the highest ``height-gap'' of their members has the smallest possible asymptotic behaviour.
With these concepts, we can predict the geometric behaviour of $d$-dimensional subvarieties as follows.

\begin{theorem}[equidistribution of subvarieties]\label{introthm:equidistribution}
Assume that $\e_1^{(d)}(X,\ol{L})=\widehat{\h}_{\ol{L}}(X)$.
If $(Y_m)_m$ is a generic and $\ol{L}$-small net of $d$-dimensional subvarieties of $X_{\ol{K}}$,
the weak convergence of probability measures on $X_v^{\an}$
\[
\frac{1}{\#O(Y_m)\deg_L(Y_m)}
\sum_{Y_m^{\sigma}\in O(Y_m)}\c_1(\ol{L}_v)^{\wedge d}\wedge\delta_{Y_{m,v}^{\sigma,\an}}
\longrightarrow
\frac{1}{\deg_L(X)}\,\c_1(\ol{L}_v)^{\wedge\dim(X)}\wedge\delta_{X_v^{\an}}
\]
holds for any place $v$, where $O(Y_m)$ denotes the set of Galois conjugates of $Y_m$ in~$X_{\ol{K}}$.
\end{theorem}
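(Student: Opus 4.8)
\emph{Plan of proof.} The argument is the variational principle of Szpiro--Ullmo--Zhang and Yuan, carried out for the height-gap functional instead of for the height; I describe it assuming $L$ ample, using the equivalent description of $\e_1^{(d)}$ recalled above (the general semipositive case being similar). For a metrized line bundle $\ol M$ with underlying $L$ write $g_{\ol M}(Y,s):=(d+1)\widehat{\h}_{\ol M}(Y)-d\,\widehat{\h}_{\ol M}(\div(s)\cdot Y)$ and $\phi_{\ol M}(Y):=\sup_s g_{\ol M}(Y,s)$, the supremum over $s\in\mathrm H^0(X_{\ol K},L_{\ol K}^{\otimes n})$, $n\geq 1$, with $Y\nsubseteq|\div(s)|$, so that $\e_1^{(d)}(X,\ol M)=\sup_H\inf_{Y\nsubseteq H}\phi_{\ol M}(Y)$ and $\ol L$-smallness of $(Y_m)_m$ reads $\phi_{\ol L}(Y_m)\to\e_1^{(d)}(X,\ol L)=\widehat{\h}_{\ol L}(X)$. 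As the measures involved are probability measures on the compact space $X_v^{\an}$, it suffices to prove the asserted convergence after integrating against each $f$ in a dense subspace of $C^0(X_v^{\an})$ --- smooth functions if $v$ is archimedean, model functions if not. Fix such an $f$; for $t\in\RR$ small let $\ol L(tf)$ agree with $\ol L$ at every place $w\neq v$ and have metric $\mathrm e^{-tf}$ times that of $\ol L$ at~$v$. Ampleness of $L$ keeps $\ol L(tf)$ semipositive for $|t|$ small (or one bypasses this with arithmetic volumes, as in~\cite{Yuan:2008}).

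The heart of the proof is a two-sided bound on $\e_1^{(d)}(X,\ol L(tf))$. From above, genericity of $(Y_m)_m$ is metric-independent, so $\e_1^{(d)}(X,\ol L(tf))\leq\liminf_m\phi_{\ol L(tf)}(Y_m)$; from below, the Zhang inequality established above, applied to $\ol L(tf)$, gives $\e_1^{(d)}(X,\ol L(tf))\geq\widehat{\h}_{\ol L(tf)}(X)$, and, since $t\mapsto\widehat{\h}_{\ol L(tf)}(X)$ is polynomial in $t$ with linear coefficient $\langle f,\mu\rangle$, where $\mu:=\deg_L(X)^{-1}\,\c_1(\ol L)^{\wedge\dim(X)}$ is the target measure,
\[
\widehat{\h}_{\ol L(tf)}(X)=\widehat{\h}_{\ol L}(X)+t\,\langle f,\mu\rangle+O(t^2).
\]
To connect the two sides one expands $g$ under the twist: combining the induction formula for heights with the Poincar\'e--Lelong equation (relating $\c_1(\ol L)^{\wedge(d-1)}\wedge\delta_{(\div(s)\cdot Y)_v^{\an}}$ to $\c_1(\ol L)^{\wedge d}\wedge\delta_{Y_v^{\an}}$ through $dd^c\log\|s\|_v$) yields, for every admissible pair $(Y,s)$ and with $O(t^2)$ uniform,
\[
g_{\ol L(tf)}(Y,s)=g_{\ol L}(Y,s)+(d+1)\,t\,\langle f,\nu_Y\rangle-d\,t\,\langle f,\nu_{Y,s}'\rangle+O(t^2),
\]
where $\nu_Y:=\deg_L(Y)^{-1}\,\c_1(\ol L)^{\wedge d}\wedge\delta_{Y_v^{\an}}$ and $\nu_{Y,s}':=\deg_L(\div(s)\cdot Y)^{-1}\,\c_1(\ol L)^{\wedge(d-1)}\wedge\delta_{(\div(s)\cdot Y)_v^{\an}}$ are probability measures on $X_v^{\an}$.

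Taking the supremum over $s$ and using $\ol L$-smallness, this reduces the theorem to controlling, for near-optimal sections $s_m$ attached to $Y_m$, the behaviour of $\langle f,\nu_{Y_m,s_m}'\rangle$ (and checking it does not disturb the supremum, which is where the induction enters). Granting $\langle f,\nu_{Y_m,s_m}'\rangle\to\langle f,\mu\rangle$, then along a subsequence with $\nu_{Y_m}\to\nu$ weakly one gets $\phi_{\ol L(tf)}(Y_m)\to\widehat{\h}_{\ol L}(X)+(d+1)t\langle f,\nu\rangle-dt\langle f,\mu\rangle+O(t^2)$, and the two-sided bound becomes $t\langle f,\mu\rangle+O(t^2)\leq(d+1)t\langle f,\nu\rangle-dt\langle f,\mu\rangle+O(t^2)$, i.e.\ $(d+1)\langle f,\mu\rangle\leq(d+1)\langle f,\nu\rangle+O(t)$; letting $t\to0^+$ gives $\langle f,\nu\rangle\geq\langle f,\mu\rangle$, and the same run with $-f$ gives equality, so $\nu=\mu$ for every weak limit point $\nu$ of $(\nu_{Y_m})_m$, i.e.\ $\nu_{Y_m}\to\mu$ weakly; averaging over Galois conjugates is the assertion. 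The missing input --- that $\langle f,\nu_{Y_m,s_m}'\rangle\to\langle f,\mu\rangle$ --- is precisely the $(d-1)$-dimensional case: the effective cycles $\div(s_m)\cdot Y_m$ form a generic, $\ol L$-small net of $(d-1)$-cycles (smallness because near-optimality of $s_m$ forces their height-gaps to tend to $\e_1^{(d-1)}(X,\ol L)$, which equals $\widehat{\h}_{\ol L}(X)$ by our hypothesis, the Zhang inequality, and a comparison of successive minima; genericity by arranging, through the choice of $s_m$, that no fixed effective divisor contains $\div(s_m)\cdot Y_m$ for infinitely many $m$), and one concludes by induction on $d$ with base case $d=0$ the equidistribution theorem of Yuan and Gubler.

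The hard part is this induction. It requires the comparison $\e_1^{(d)}(X,\ol L)\geq\e_1^{(d-1)}(X,\ol L)$ of consecutive higher essential minima, so that the auxiliary $(d-1)$-cycles inherit $\ol L$-smallness; and --- more delicately --- it requires selecting near-optimal sections $s_m$ so that the cycles $\div(s_m)\cdot Y_m$ form a genuinely generic net of $(d-1)$-cycles, robustly enough to survive the two successive limits $m\to\infty$ and $t\to0^+$. I expect most of the technical effort to go here, probably by proving a strengthened statement that simultaneously controls the near-optimal divisorial cycles so that the induction closes on itself; the remainder is routine variational bookkeeping together with the standard arithmetic intersection identities (induction formula, Poincar\'e--Lelong, multilinearity of the arithmetic intersection product).
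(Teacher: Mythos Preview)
Your variational setup is right, but the inductive strategy you propose is both unnecessary and unworkable as stated. The paper's argument avoids induction on $d$ altogether, and the step you flag as ``the hard part'' is precisely where your approach breaks.

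\textbf{What the paper does instead.} Rather than comparing $\e_1^{(d)}(X,\ol L(tf))$ with $\liminf_m\phi_{\ol L(tf)}(Y_m)$ (which forces you to take a supremum over $s$ for the \emph{perturbed} functional and hence drags in the $(d-1)$-dimensional measure $\nu'_{Y,s}$), the paper bounds $\phi_{\ol L}(Y_m)$ from below by evaluating at a \emph{single} section $s_t$ produced by the Key Inequality (Theorem~\ref{thm:key-lemma}): this $s_t$ has controlled sup-norm with respect to the perturbed metric, namely $\sum_w n_w\log\sup\|s_t\|_{\ol L(tf),w}\le n(-\widehat{\h}_{\ol L(tf)}(X)+O(t^2))$. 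The point is that one can now bound $I_{\ol L}(Y_m,s_t)$ directly: at the distinguished place one writes $-\log\|s_t\|_{\ol L}= -\log\|s_t\|_{\ol L(tf)}-tf$, integrates against the \emph{unperturbed} positive measure $\c_1(\ol L_v)^{\wedge d}\wedge\delta_{Y_{m,v}^{\an}}$, and bounds each local term by the sup-norm. The outcome is
\[
\frac{I_{\ol L}(Y_m,s_t)}{\deg_L(Y_m)}\;\ge\;\widehat{\h}_{\ol L(tf)}(X)-O(t^2)\;-\;n_{w_0}\,t\,\langle f,\nu_{Y_m}\rangle,
\]
which involves only the $d$-dimensional probability measure $\nu_{Y_m}$ --- no $(d-1)$-dimensional term appears. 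Combining with $\ol L$-smallness, $L$-genericity (to discard the $Y_m$ contained in $|\div(s_t)|$), and the expansion of $\widehat{\h}_{\ol L(tf)}(X)$ finishes the variational step in a single stroke, for every $d$ simultaneously.

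\textbf{Why your induction does not close.} Your scheme needs $\e_1^{(d-1)}(X,\ol L)=\widehat{\h}_{\ol L}(X)$ in order to apply the inductive hypothesis to the cycles $\div(s_m)\cdot Y_m$. You propose to get this from a comparison $\e_1^{(d-1)}\le\e_1^{(d)}$, but no such monotonicity is proved in the paper, nor is it clear it should hold in general: the definitions of the $\e_1^{(d)}$ for different $d$ involve suprema over entirely different spaces of sections cutting different subvarieties, and there is no obvious mechanism forcing them to be ordered. Without this, the only hypothesis you have is on $\e_1^{(d)}$, and the $(d-1)$-dimensional auxiliary net cannot be certified $\ol L$-small. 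The genericity of $\div(s_m)\cdot Y_m$ is a second, independent difficulty: for a single near-optimal $s_m$ per $Y_m$ there is no reason the resulting $(d-1)$-cycles should escape every fixed hypersurface, and trying to arrange this by varying $s_m$ conflicts with near-optimality. The paper sidesteps both issues by never needing the inductive hypothesis at all.
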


As in the case of points, the nonemptyness of this statement is ensured by the condition on the $d$-dimensional essential minimum, namely that Zhang's inequality is an equality. Moreover, the above convergence does not require the extra assumption~\eqref{hypothesis *}, and contains Yuan's equidistribution theorem in the higher dimensional situation (see Proposition~\ref{prop:autissier} and the comment that follows it).

In Theorem~\ref{equidistribution theorem} we prove a more general version of the equidistribution result, dealing with nets of effective cycles which are not necessarily Galois orbits.
In the case of points, a result of this kind can be deduced using a diagonal extraction argument, see for instance \cite[Corollary 8.6]{Dujardin} in the dynamical case.
Notice that our extension requires a generalization of the definitions of genericity and smallness for nets of effective cycles, which are given in \hyperref[definition genericity]{Definition \ref*{definition genericity}} and \hyperref[definition smallness]{Definition \ref*{definition smallness}} respectively.

\vspace{\baselineskip}

The point of view adopted in this paper opens interesting questions analogue to the case of points.
More precisely, it would be interesting to explore if, as in the case of points, the use of the higher dimensional essential minima has applications beyond Theorem~\ref{def introduction generic and small}.
For instance, can small and generic nets of $d$-dimensional cycles equidistribute even when~$\e_1^{(d)}(X,\ol{L})>\widehat{\h}_{\ol{L}}(X)$?
Under which conditions on the metrized line bundle does this happen?
Can the limit measure be described explicitly in these cases?
As for the $0$-dimensional situation, we hope that the toric and the semiabelian worlds may offer new insight and testing grounds for such questions.

\vspace{\baselineskip}

The paper is organized as follows.
In Section~1, we recall some preliminary material on arithmetic geometry and height theory.
In Section~2, we introduce the notion of higher essential minima and deduce their basic properties.
Section~3 is devoted to the proof of a key inequality (Theorem~\ref{thm:key-lemma}) for the equidistribution theorem, and from which we deduce our analogue of Zhang's inequality (Corollary~\ref{cor:Zhangs-ineq}).
In Section~4, we prove the equidistribution theorem in its general form (Theorem~\ref{equidistribution theorem}).
Finally, in Section~5, we first compare our result with the ones already present in literature and then explore its applications for heights arising from dynamical systems.

\vspace{\baselineskip}

\noindent\textbf{Acknowledgements.}
The authors would like to thank Walter Gubler and Mart\'{\i}n Sombra for many precious and fruitful discussions, as well as the anonymous referee for valuable remarks and suggestions.
They are also grateful to the Universities of Barcelona, Bordeaux, Caen and Regensburg for their hospitality while this research was carried on.

%
%
%
%
%
%
%
%
%
%
%
%
%
%
%
%
%
%
%
%
%
%
%
%
%
%
%
%
%
%
%
%
%
%
%
%
%
%

\vspace{\baselineskip}

\begin{center}\textbf{Terminology and conventions}\end{center}
By a \emph{variety} over a field $K$ we mean a reduced and irreducible separated scheme of finite type over~$\Spec K$.
If $X$ is a variety over $K$, $L$ is a line bundle on $X$ and $K'$ a field extension of~$K$, we write $X_{K'}$ and $L_{K'}$ for the base change of $X$ and $L$ to $K'$.
If $X$ is a variety over a field, a \emph{subvariety} $Y$ of $X$ is a closed integral subscheme of~$X$.
We simply write~$Y\subseteq X$.
A \emph{$d$-cycle} or a \emph{cycle of pure dimension $d$} in $X$ is a formal finite sum of $d$-dimensional subvarieties of $X$.

For any place $v$ of a field $K$, we denote by $K_v$ the completion of $K$ with respect to the topology given by~$v$.
The algebraic closure of $K_v$ is equipped with a unique extension of $\abs{\cdot}_v$, and its completion with respect to such an absolute value is denoted by~$\mathbb{C}_v$.
If $X$ is variety over $K$ and $L$ a line bundle on~$X$, the notations $X_v^{\an}$ and $L_v^{\an}$ stand for the Berkovich analytifications of the base change of $X$ and $L$ over~$\mathbb{C}_v$.
The analytic space $X_v^{\an}$ comes with an action of the Galois group~$\Gal(\ol{K_v}/K_v)$.

%
%
%
%
%
%
%
%
%
%
%
%
%
%
%
%
%
%
%
%
%
%
%
%
%
%
%
%
%
%
%
%
%
%
%
%
%
%

\numberwithin{equation}{section}
\numberwithin{thm}{section}
\section{Preliminaries in height theory}

We collect in this section the definitions and results in algebraic geometry and adelic Arakelov theory that are used throughout all the paper. In particular, we recall the usual adelic structure on number fields and function fields, the definition of local and global heights on varieties defined over such fields, and some geometrical and arithmetical notions of positivity of line bundles. We also introduce elementary perturbations of metrized line bundles and study their influence on heights of cycles.

\subsection{Number fields and function fields}

Throughout this paper, $K$ denotes either a number field or the function field of a regular projective curve defined over any field.
In both cases, $K$ can be given the structure of an 
\emph{adelic field}
in the sense of \cite[Definition 1.5.1]{Burgos-Philippon-Sombra:2014} by specifying a collection of places $\fM_K$ on~$K$, which we identify with a choice, for each~$v\in\fM_K$, of an absolute value $\abs{\cdot}_v$ on $K$ representing $v$ and a positive real weight~$n_v$.

\begin{definition}\label{def:places}
The adelic structure of the field $K$ is defined by the following choices.
\begin{enumerate}
\item If $K=\QQ$, the set $\fM_\QQ$ consists of the archimedean and the $p$-adic places of~$\QQ$, with corresponding absolute values normalized in the standard way, see \cite[\S1.2]{Bombieri-Gubler:2006}, and all weights equal to~$1$.
\item If $K=k(C)$, with $C$ a regular projective curve defined over a field $k$, the set $\fM_{k(C)}$ consists of all closed points of~$C$.
We associate to every $v\in\fM_{k(C)}$ the absolute value and weight given by
\[
\abs{\cdot}_v=c_k^{-\ord_v(\cdot)}
\quad\mbox{and}\quad
n_v=[k(v):k],
\]
where $\ord_v$ denotes the order of vanishing at $v$ and
\[
c_k:=
\begin{cases}
\#k &\mbox{if }\#k<\infty,
\\
\mathrm{e} & \mbox{otherwise.}
\end{cases}
\]
\item
If $K$ is a finite field extension of~$F$, where $F=\QQ$ or~$F=k(C)$ as in (2),
the set $\fM_K$ consists of all the places of $K$ which restrict to a place in $\fM_{F}$.
We associate to all $w\in\fM_K$ the unique absolute value $\abs{\cdot}_w$ on $K$ in $w$ restricting on $F$ to ~$\abs{\cdot}_v$ for some $v\in\fM_{F}$ and the weight
\[
n_w=\frac{\dim_{F_v}(E_w)}{[K:F]}n_v,
\]
where the $E_w$'s are the local Artinian $F_v$-algebra that appear in the decomposition of $K\otimes_{F}F_v$ and are in one-to-one correspondence with absolute values on $K$ over~$\abs{\cdot}_v$. We refer to \cite[Definition~3.5]{Martinez-Sombra:BKK} or \cite[Remark~2.5]{Gubler:Mfields} for more details about this construction. 
\end{enumerate}
\end{definition}

\begin{remark}
The definitions of the adelic structure of $K$ in (2) and (3) are compatible.
This means that if we have a finite map $C\rightarrow C'$, the adelic structure on $k(C)$ agrees with the one coming from~$k(C')$ by field extension.
On the other hand, any finite field extension of $k(C)$ can be seen as a function field of a finite cover of~$C$.
See~\cite[Example~3.9]{Martinez-Sombra:BKK} for details.
\end{remark}

Whenever it is clear from the context, the set of places of $K$ will be simply denoted by $\fM$. By construction, the adelic fields $(K,\fM)$ introduced in Definition~\ref{def:places} satisfy the \emph{product formula}, that is
\[
\sum_{v\in\fM}n_v\,\log\abs{\alpha}_v=0\quad\mbox{for every } \alpha\in\K^\times,
\]
see \cite[Proposition 1.4.4 and Proposition 1.4.7]{Bombieri-Gubler:2006}.

\subsection{Local and global heights}

Let $X$ be a projective variety over $K$, $L$ a line bundle on $X$, and $v\in\fM$ a fixed place of $K$.
A (\emph{continuous}) \emph{$v$-adic metric} on $L$ is the datum of a map $\norm{\cdot}_v\colon L_v^{\an}(U)\rightarrow Cont(U,\RR_{\geq 0})$ for each open subset~$U\subseteq X^{\an}_v$, satisfying the properties in \cite[\S1.1.1]{Chambert-Loir:2011}, with the additional requirement that it is invariant with respect to the action of $\Gal(\ol{K_v}/K_v)$ on~$X_v^{\an}$.
Whenever we want to stress the invariance under the given Galois group, we say that the metric is \emph{defined over~$K$}.
A line bundle $L$ with a continuous $v$-adic metric $\|\cdot\|_v$ is called a \emph{$v$-adic metrized line bundle} and it is denoted by $(L,\|\cdot\|_v)$ or, for short, by~$\overline{L}_v$. 

It is possible to define pull-backs, tensor products and inverses of $v$-adic metrized line bundles. This gives the set of isometry classes of $v$-adic metrized line bundles over $X$ the structure of an abelian group, see \cite[\S1.1.2]{Chambert-Loir:2011}; the neutral element is the class of $(\mathscr{O}_X,\|\cdot\|_{v,\tr})$, with $\|1\|_{v,\tr}=1$ defining the \emph{$v$-adic trivial metric} on $\mathscr{O}_X$.
If $\ol{L}_v=(L,\norm{\cdot}_v)$ is a $v$-adic metrized line bundle, we denote by $\norm{\cdot}_v^{\otimes n}$ the metric of~$\ol{L}_v^{\otimes n}$.
Also, if $(K',\fM^\prime)$ is an adelic finite field extension of $(K,\fM)$ and $w\in\fM^\prime$ is such that $w\mid v$, a continuous $v$-adic metric on $L$ defines a continuous $w$-adic metric on the extension of $L$ to $K^\prime$, as $\Gal(\ol{K^\prime_w}/K^\prime_w)\subseteq\Gal(\ol{K_v}/K_v)$.

For two continuous $v$-adic metrics $\|\cdot\|_{1,v}$ and $\|\cdot\|_{2,v}$ on $L$, their \emph{distance} is defined to be
\begin{equation}\label{definition distance of metrics}
\mathrm{d}_v(\norm{\cdot}_{1,v},\norm{\cdot}_{2,v}):=\sup_{p\in X_v^{\an}\setminus\div(s)}\big|\log (\norm{s(p)}_{1,v}/\norm{s(p)}_{2,v})\big|
\end{equation}
for any choice of a nonzero rational section $s$ of $L$.

When $v$ is an archimedean place of $K$, a continuous $v$-adic metric on $L^{\an}_v$ is said to be \emph{semipositive} if its associated first Chern current $\c_1(\ol{L}_v)$ is semipositive, see~\cite[\S1.2.8]{Chambert-Loir:2011} for more details.

When $v$ is a non-archimedean place of $K$, an \emph{algebraic} $v$-adic metric on $L$ is a metric $\norm{\cdot}_v$ on~$L^{\an}_v$ such that there is a nonzero $e\in\NN$ for which $\norm{\cdot}_v^{\otimes e}$ is induced by an algebraic $K_v^{\circ}$-model $(\mathscr{X},\mathscr{L})$ of $(X,L^{\otimes e})$ in the sense of~\cite[Definition~2.5 and Remark~2.6]{Gubler-Martin}.
Notice that this notion agrees with the one of formal metrics introduced in~\cite[\S7]{Gubler:localheights}, see~\cite[Proposition~8.13]{Gubler-Kuennemann:2017}.
The algebraic $v$-adic metric on $L$ induced by $\mathscr{L}$ is said to be \emph{semipositive} if $\mathscr{L}\cdot C\geq 0$ for every closed integral vertical curve $C$ in~$\mathscr{X}$. 
We refer to~\cite[Theorem~0.1]{Gubler-Kuennemann:positivity} for equivalent definitions of semipositivity of formal metrics using the language of forms and currents on Berkovich spaces.
More generally, a $v$-adic metric $\|\cdot\|_v$ on $L$ is said to be \emph{semipositive} if there exists a sequence of semipositive algebraic $v$-adic metrics on $L$ converging to $\|\cdot\|_v$ with respect to the distance defined in~\eqref{definition distance of metrics}.
For algebraic $v$-adic metrics, this agrees with the previous definition, see~\cite[Proposition~7.2]{Gubler-Kuennemann:positivity}.

Finally, a metrized line bundle $\overline{L}_v$ is said to be \emph{DSP}, short for difference of semipositive, if there exist semipositive metrized line bundles $\overline{M}_v$ and $\overline{N}_v$ such that
\[
\overline{L}_v\simeq\overline{M}_v\otimes\overline{N}_v^{-1}.
\]

For a $d$-dimensional subvariety $Y$ of~$X$ and the choice of a $d$-tuple of semipositive $v$-adic metrized line bundles $\overline{L}_{0,v},\ldots,\overline{L}_{d-1,v}$ on~$X$, one can construct a regular Borel measure
$\c_1(\overline{L}_{0,v})\wedge\ldots\wedge \c_1(\overline{L}_{d-1,v})\wedge\delta_{Y^{\an}_v}$
on $X_v^{\an}$, which is supported on $Y_v^{\an}$. In the archimedean case, it can be defined by Bedford-Taylor theory, see for instance \cite[Corollary 2.3]{Demailly:LelongNumbers}, while in the non-archimedean case it was first introduced in \cite[D\'efinition~2.4 and Proposition~2.7~b)]{Chambert-Loir:mesures-equidistribution} and later in~\cite[3.8]{Gubler:tropical} under relaxed assumptions.
Furthermore, this measure can be extended by multilinearity to a $d$-cycle $Z$  of~$X$, and we denote it by
\begin{equation}\label{eq: Chambert-Loir measure}
\c_1(\overline{L}_{0,v})\wedge\ldots\wedge \c_1(\overline{L}_{d-1,v})\wedge\delta_{Z^{\an}_v}.
\end{equation}
When all the $v$-adic metrized line bundles coincide with $\overline{L}_v$, one may simply write $\c_1(\overline{L}_v)^{\wedge d}\wedge\delta_{Z^{\an}_v}$.


\begin{proposition}\label{prop:measure-properties}
With the above hypotheses and notations, the measure in \eqref{eq: Chambert-Loir measure} satisfies the following properties:
\begin{enumerate}
\item it is a measure on $X_v^{\an}$ of total mass $\deg_{L_0,\ldots,L_{d-1}}(Z)$, and positive if $Z$ is effective;
\item it is symmetric and multilinear in the choice of $\overline{L}_{0,v},\ldots,\overline{L}_{d-1,v}$;
\item given, for all $i=0,\ldots,d-1$, a sequence $(\norm{\cdot}_{i,v,\ell})_\ell$ of continuous semipositive $v$-adic metrics on $L_i$ converging to $\norm{\cdot}_{i,v}$ with respect to the distance in~\eqref{definition distance of metrics}, then there is a weak convergence of the corresponding measures.
\end{enumerate}
\end{proposition}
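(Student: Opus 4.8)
The plan is to reduce to the case of a single subvariety, then to the top-dimensional situation, and afterwards to treat the archimedean and non-archimedean places separately, in each case invoking the foundational continuity results and bootstrapping from smooth (resp.\ algebraic) metrics to arbitrary semipositive ones. Since the measure in \eqref{eq: Chambert-Loir measure} is obtained by extending by multilinearity from subvarieties, and since all three assertions are preserved under finite positive linear combinations, I would first reduce to the case where $Z=Y$ is an irreducible $d$-dimensional subvariety. Using the closed immersion $\iota\colon Y_v^{\an}\hookrightarrow X_v^{\an}$ and functoriality of the construction one has
\[
\c_1(\ol{L}_{0,v})\wedge\dots\wedge\c_1(\ol{L}_{d-1,v})\wedge\delta_{Y_v^{\an}}
=\iota_*\big(\c_1(\ol{L}_{0,v}|_Y)\wedge\dots\wedge\c_1(\ol{L}_{d-1,v}|_Y)\big),
\]
so we may assume $X=Y$ and $d=\dim X$, reducing everything to the Monge--Ampère measure of a $d$-tuple of semipositive metrized line bundles on the $d$-dimensional projective variety $X$.

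For an archimedean place $v$ I would begin with smooth semipositive metrics, for which $\c_1(\ol{L}_{i,v})$ is a smooth semipositive $(1,1)$-form representing $c_1(L_i)$ in cohomology; hence $\c_1(\ol{L}_{0,v})\wedge\dots\wedge\c_1(\ol{L}_{d-1,v})$ is a positive smooth measure of total mass $\int_X c_1(L_0)\cdots c_1(L_{d-1})=\deg_{L_0,\dots,L_{d-1}}(X)$, and symmetry and multilinearity follow from the corresponding properties of the wedge product together with $\c_1(\ol{L}\otimes\ol{M})=\c_1(\ol{L})+\c_1(\ol{M})$. For general continuous semipositive metrics the measure is defined through Bedford--Taylor theory: the total mass is unchanged, being the cohomological intersection number; positivity passes to weak limits; and the weak convergence in (3) under $\mathrm{d}_v$-convergence of metrics (equivalently, uniform convergence of the local potentials) is the classical continuity theorem for the Monge--Ampère operator on bounded plurisubharmonic functions, see \cite[Corollary~2.3]{Demailly:LelongNumbers}. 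Symmetry and multilinearity then descend to the limit.

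For a non-archimedean place $v$ I would proceed analogously, starting with algebraic semipositive metrics induced by a common $K_v^{\circ}$-model $\mathscr{X}$ of $X$: by Chambert-Loir's construction the measure $\c_1(\ol{L}_{0,v})\wedge\dots\wedge\c_1(\ol{L}_{d-1,v})\wedge\delta_{X_v^{\an}}$ is a finite sum of Dirac masses at the divisorial points attached to the irreducible components of the special fibre of $\mathscr{X}$, weighted by intersection numbers of the model line bundles with those components; these weights are nonnegative by the semipositivity of the models, giving positivity, their total is $\deg_{L_0,\dots,L_{d-1}}(X)$, and symmetry and multilinearity come from the intersection pairing on $\mathscr{X}$ (see \cite[D\'efinition~2.4 and Proposition~2.7~b)]{Chambert-Loir:mesures-equidistribution} and \cite[3.8]{Gubler:tropical}). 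For arbitrary semipositive metrics one approximates by algebraic ones; the crucial input is the estimate bounding the variation of such a measure, evaluated against a fixed continuous function, linearly in the $\mathrm{d}_v$-distances of the metrics. This simultaneously makes the limit measure well defined, yields statement (3), and lets symmetry and multilinearity descend, with total mass and positivity preserved throughout (\cite[3.8]{Gubler:tropical}, \cite[Proposition~7.2]{Gubler-Kuennemann:positivity}).

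The main obstacle is this last bootstrapping step: proving (3) and, with it, the independence of the limit measure from the approximating sequence --- which in the non-archimedean case rests on a quantitative continuity estimate for intersection numbers on models, and in the archimedean case on the Bedford--Taylor continuity theorem for uniformly convergent sequences of bounded plurisubharmonic functions. Once that continuity is in hand, everything else --- additivity over cycles, total mass, positivity, symmetry, multilinearity --- is formal.
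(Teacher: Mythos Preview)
Your proposal is correct and follows essentially the same approach as the paper: reduce to a prime cycle, split into the archimedean and non-archimedean cases, and invoke the foundational references (Demailly for Bedford--Taylor theory, Chambert-Loir and Gubler for the non-archimedean Monge--Amp\`ere measure). The paper's own proof is even terser---after reducing to a prime cycle it simply cites \cite[Proposition~1.2, Corollary~1.10]{Demailly:LelongNumbers} together with Wirtinger's theorem for the archimedean case, and \cite[Corollary~3.9(a), Proposition~3.12]{Gubler:tropical} for the non-archimedean one---whereas you unpack the bootstrapping from smooth/algebraic metrics to general semipositive ones and add the extra reduction to the top-dimensional case via pushforward along $\iota$, which is a helpful clarification but not a different strategy.
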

\begin{proof}
We can assume that $Z$ is a prime cycle.
In the archimedean case, the claims are a consequence of the definition of the first Chern current and of the measure in \eqref{eq: Chambert-Loir measure}, of \cite[Proposition 1.2 and Corollary 1.10]{Demailly:LelongNumbers} and of the classical Wirtinger theorem.
In the non-archimedean case, these properties are proven in~\cite[Corollary~3.9~(a) and Proposition~3.12]{Gubler:tropical}.
\end{proof}

The measure in~\eqref{eq: Chambert-Loir measure} allows the definition of the \emph{local height} of a $d$-cycle $Z$ of $X$ with respect to the choice of pairs $(\ol{L}_{i,v},s_i)$, $i=0,\ldots,d$, consisting of a semipositive $v$-adic metrized line bundle on $X$ and a rational section $s_i$ of $L_i$,
such that $s_0,\ldots,s_d$ intersect $Z$ properly.
We define $\h(\emptyset):=0$ and, for $d\geq 0$, we follow the recursive formula
\begin{multline}\label{definition of local height}
\h_{(\ol{L}_{0,v},s_0),\ldots,(\ol{L}_{d,v},s_d)}(Z):=
\h_{(\ol{L}_{0,v},s_0),\ldots,(\ol{L}_{d-1,v},s_{d-1})}(\div(s_d)\cdot Z)
\\
-\int_{X_v^{\an}}\log\norm{s_d}_{d,v}\ \c_1(\ol{L}_{0,v})\wedge\cdots\wedge \c_1(\ol{L}_{d-1,v})\wedge\delta_{Z_v^{\an}}.
\end{multline}
It is symmetric and multilinear in the choice of $\ol{L}_{0,v},\ldots,\ol{L}_{d,v}$.
Moreover, we can extend this definition to DSP $v$-adic metrized line bundles.

\begin{remark}\label{rmk:continuity-of-height}
Let $Z$ be a $d$-cycle, and $(L_0,s_0),\ldots,(L_d,s_d)$ fixed line bundles on $X$ equipped with rational sections intersecting $Z$ properly. By~\eqref{definition of local height} and \hyperref[prop:measure-properties]{Proposition~\ref*{prop:measure-properties}}, the function
\[
(\norm{\cdot}_0,\ldots,\norm{\cdot}_d)\longmapsto\h_{((L_0,\norm{\cdot}_0),s_0),\ldots,((L_d,\norm{\cdot}_d),s_d)}(Z)
\]
is Lipschitz continuous on the set of $(d+1)$-tuples of DSP $v$-adic metrics on $L_0,\dots,L_d$ respectively.
\end{remark}

Next, we deal with the adelic structure of $(K,\fM)$.
For this we combine the local pieces of information introduced above with some coherence condition.
A \emph{metrized line bundle} $\ol{L}:=(L,(\norm{\cdot}_v))$ is a line bundle $L$ together with a $v$-adic metric for each place~$v\in\fM$. It is called \emph{semipositive} (respectively \emph{DSP}) if the $v$-adic metric $\|\cdot\|_v$ is semipositive (respectively DSP) for all $v\in\mathfrak{M}$.

A metrized line bundle $\overline{L}$ is said to be \emph{quasi-algebraic} if there exists a finite set $S\subseteq\fM$ containing all archimedean places, a nonzero $e\in\NN$ and an algebraic $K^{\circ}_S$-model $(\scrX,\scrL)$ of $(X,L^{\otimes e})$, such that for each $v\not\in S$ the metric $\norm{\cdot}^{\otimes e}_v$ is induced by localizing the model at~$v$.
A quasi-algebraic metrized line bundle is called~\emph{algebraic} if $S$ coincides precisely with the set of archimedean places.
Pull-backs, tensor products and inverses of quasi-algebraic line bundles are again such.

\begin{proposition}\label{prop:panico}
Let $\ol{L}=(L,(\|\cdot\|_v))$ be a quasi-algebraic metrized line bundle on~$X$, defined over~$K$ and such that $\|\cdot\|_v$ is an algebraic $v$-adic metric on $L$ for all non-archimedean places $v$ of~$K$.
Then, there exists a finite field extension $K'$ of $K$ such that the base change of $\ol{L}$ to $K'$ is algebraic.
\end{proposition}
\begin{proof}
By considering a suitable positive integer tensor power of $L$, we can assume that every non-archimedean $v$-adic metric is given by a $K^\circ_v$-model of $(X,L)$.
In such a case, this follows from \cite[Lemma~3.5]{Yuan:2008} and \cite[Proposition~3.4]{Gubler:2008}.
Notice that the equivalence between $v$-adic formal and algebraic metrics is proven in~\cite[Theorem~1.1]{Gubler-Martin}.
\end{proof}

Recall that in our setting, saying that $\ol{L}$ is defined over~$K$ is solely involved with the $\Gal(\ol{K}_v/K_v)$-invariance of the $v$-adic metrics.

\begin{remark}
Given a DSP quasi-algebraic metrized line bundle  $\ol{L}=(L,(\norm{\cdot}_v))$,
for every closed point $p$ of $X$ and every rational section $s$ of $L$ such that $p\not\in |\div(s)|$,
we have
\[
\norm{s(p^{\an}_v)}_v=1,
\]
for almost every place $v\in\fM$.
This is enough to show that, given a $d$-cycle $Z$ of $X$ and a family of DSP quasi-algebraic metrized line bundles together with rational sections $(\ol{L}_i,s_i)$, $i=0,\ldots,d$, such that $s_0,\ldots,s_d$ intersect $Z$ properly, 
\[
\h_{((L_0,\norm{\cdot}_{0,v}),s_0),\ldots,((L_d,\norm{\cdot}_{d,v}),s_d)}(Z)=0,
\]
for almost every~$v\in\fM$.
We refer to~\cite[Propositions~1.5.8 and~1.5.14]{Burgos-Philippon-Sombra:2014} for more details about these statements.
\end{remark}

The previous remark allows to define global heights as finite weighted sums of local heights.
More precisely, given a family of DSP quasi-algebraic metrized line bundles $\ol{L}_0,\ldots,\ol{L}_d$, the \emph{global height} of a $d$-cycle $Z$ of $X$ is set to be
\[
\h_{\ol{L}_0,\ldots,\ol{L}_d}(Z):=\sum_{v\in\fM} n_v\, \h_{((L_0,\norm{\cdot}_{0,v}),s_0),\ldots,((L_d,\norm{\cdot}_{d,v}),s_d)}(Z),
\]
where $s_i$ is a rational section of $L_i$, $i=0,\ldots,d$, such that $s_0,\ldots,s_d$ intersect $Z$ properly.
Whenever all metrized line bundles coincide, we write $\h_{\ol{L}}(Z)$ for short.

This definition does not depend on the choice of the sections, by combining~\cite[Corollary~3.8]{Gubler:Mfields} and the fact that the product formula holds.
Moreover, it is symmetric and multilinear in the choice of $\ol{L}_0,\ldots,\ol{L}_d$, and invariant by finite field extensions.
In particular, one obtains a well-defined height function on the $d$-dimensional cycles of $X$ over~$\ol{K}$ by considering any finite field extensions of $K$ over which a cycle is defined and equipping it with the the structure given in \hyperref[def:places]{Definition~\ref{def:places}}.

The height can be seen as the arithmetic analogue of the notion of the \emph{degree} of a cycle with respect to a line bundle~$L$.
Indeed, recall that for a closed point $p$ of $X$ one sets~$\deg_L(p):=[K(p):K]$, which extends by linearity to $0$-cycles of~$X$.
Furthermore, for any $d$-cycle $Z$ of~$X$, its degree is defined recursively by $\deg_L(Z):=\deg_L(\div(s)\cdot Z)$ for an arbitrary rational section $s$ of $L$ intersecting $Z$ properly.

\begin{remark}\label{rmk:height=degree}
When $K$ is the function field of a curve and $\ol{L}$ is a semipositive algebraic metrized line bundle with metrics given by an algebraic model $(\scrX,\scrL)$ of $(X,L^{\otimes e})$, the height $\h_{\overline{L}}(Y)$ of a subvariety $Y$ of $X$ equals
$\deg_{\scrL}(\scrY)$,
where $\scrY$ is the closure of $Y$ in~$\scrX$.
\end{remark}

If $Y$ is a  subvariety of $X$ with~$\deg_L(Y)\neq0$, we set the \emph{normalized height of $Y$ with respect to a DSP quasi-algebraic metrized line bundle $\ol{L}$} to be
\begin{equation}\label{normalized height}
\widehat{\h}_{\ol{L}}(Y):=\frac{\h_{\ol{L}}(Y)}{(\dim(Y)+1)\deg_{L}(Y)}.
\end{equation}

\begin{example}
When~$X=\mathbb{P}_{\mathbb{Q}}^n$, there exists a choice of a canonical metric on the line bundle $\mathscr{O}(1)$ for which the associated height agrees with the classical one introduced by Weil and Northcott for algebraic numbers, see~\cite[Examples 1.3.11 and~1.4.4]{Burgos-Philippon-Sombra:2014} for the precise definition.
In such a case, for instance, the closed point $p$ of $\mathbb{P}^1_{\mathbb{Q}}=\Proj\mathbb{Q}[t_0,t_1]$ given by the homogeneous polynomial~$t_1^k-2t_0^k$ satisfies
\[
\h_{\ol{\mathscr{O}(1)}}(p)=\log 2 \quad\text{and}\quad \widehat{\h}_{\ol{\mathscr{O}(1)}}(p)=\frac{\log 2}{k},
\]
for all~$k\geq1$.
\end{example}

For any two collections $(\norm{\cdot}_{1,v})$ and $(\norm{\cdot}_{2,v})$ of $v$-adic metrics on $L$ such that $\|\cdot\|_{1,v}=\|\cdot\|_{2,v}$ for all but finitely many $v\in\fM$, we define their \emph{distance} as
\[
\mathrm{d}\big((\norm{\cdot}_{1,v}),(\norm{\cdot}_{2,v})\big)
:=
\sum_{v\in\fM}n_v\mathrm{d}_v(\norm{\cdot}_{1,v},\norm{\cdot}_{2,v}).
\]
The $v$-adic data of two quasi-algebraic metrics on a line bundle~$L$ coincide in all but a finite number of places. 

\begin{lemma}\label{lemma:Lipschitz}
For every fixed $d$-cycle $Z$ of $X$, the function
\[
((\norm{\cdot}_{0,v}),\ldots,(\norm{\cdot}_{d,v}))\longmapsto\h_{(L_0,(\norm{\cdot}_{0,v})),\ldots,(L_d,(\norm{\cdot}_{d,v}))}(Z)
\]
is Lipschitz continuous on the set of $(d+1)$-tuples of DSP quasi-algebraic collections of $v$-adic metrics on $L_0,\dots,L_d$ respectively.
\end{lemma}

\begin{proof}
This follows from \hyperref[rmk:continuity-of-height]{Remark~\ref*{rmk:continuity-of-height}} and the multilinearity of heights with respect to DSP metrized line bundles.
\end{proof}

\subsection{Elementary perturbations of metrized line bundles}
Let $X$ be a projective variety over~$K$.
We here introduce a relevant class of continuous functions on~$X_v^{\an}$, for some $v\in\fM$, that play a central role in the proof of the equidistribution theorem in Section~4.

For the convenience of presentation, we unify two well-known archimedean and non-archimedean notions under a common name.

\begin{definition}
Let $v\in\mathfrak{M}$.
A real-valued function $f$ on $X_v^{\an}$ is said to be a \emph{$v$-adic elementary function} if
\begin{enumerate}
\item when $v$ is archimedean, $f$ is smooth;
\item when $v$ is non archimedean, $f$ is piecewise $\mathbb{Q}$-linear in the sense of~\cite[Definition~2.11]{Gubler-Martin}.
\end{enumerate}
\end{definition}

The following proposition relates $v$-adic elementary functions with metrics on $\mathscr{O}_X$ defined on a suitable finite field extension of $K$.

\begin{proposition}\label{prop:elementary functions and metrized trivial line bundle}
Let $v\in\mathfrak{M}$, and let $f$ be a $v$-adic elementary function on $X$.
There exists a finite field extension $K^\prime$ of $K$, and a place $w$ of $K^\prime$ over $v$ such that the choice $-\log\|1\|:=f$ determines a $w$-adic metric on $\mathscr{O}_X$ which is defined over $K^\prime$.
Moreover, when $v$ is non-archimedean, the so-defined $w$-adic metric is algebraic.
\end{proposition}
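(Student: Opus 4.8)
The plan is to treat the archimedean and non-archimedean cases separately, in each case reducing the statement to a standard structure theorem for the relevant class of functions. In both cases the strategy is the same: a $v$-adic elementary function on $X$ is ``locally'' of the form $-\log$ of the norm of a rational section of a line bundle on some model or chart, and by patching these local data over a finite extension one produces a $w$-adic metric on $\mathscr{O}_X$. First I would recall that a choice $-\log\|1\| := f$ with $f$ continuous automatically gives a continuous $w$-adic metric on $\mathscr{O}_X$; the only subtle points are (i) that the metric be \emph{defined over} $K'$, i.e.\ $\Gal(\ol{K'_w}/K'_w)$-invariant, and (ii) in the non-archimedean case that the resulting metric be algebraic, which requires $f$ to be realized by an honest model.

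In the archimedean case, $f$ is by hypothesis a smooth real-valued function on $X_v^{\an}$. Here there is essentially nothing to realize: $-\log\|1\|_w := f$ defines a continuous (indeed smooth) metric on $\mathscr{O}_X$ directly, and I would take $K' = K$ (or any finite extension; for a complex place one may already have $K'_w = \CC$). The Galois-invariance condition is the statement that $f$ descends to $X_v^{\an}$, which is automatic since $f$ was given as a function on $X_v^{\an}$ — that is precisely the convention in the definition of a $v$-adic metric recalled in Subsection~1.B. So this case is immediate once one unwinds the definitions.

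The non-archimedean case is the substantive one. Here $f$ is piecewise $\QQ$-linear in the sense of \cite[Definition~2.11]{Gubler-Martin}, and the key point I would invoke is the correspondence — also from \cite{Gubler-Martin} — between piecewise $\QQ$-linear functions on $X_v^{\an}$ and algebraic metrics on $\mathscr{O}_X$: a function is piecewise $\QQ$-linear precisely when, after passing to a suitable finite extension and clearing a denominator $e$, $ef$ is of the form $-\log$ of the absolute value of an invertible rational function on a vertical Cartier divisor in an algebraic model, equivalently $e f$ defines an algebraic metric on $\mathscr{O}_X^{\otimes e}$ given by a model $(\mathscr{X}, \mathscr{L})$ with $\mathscr{L}$ a line bundle on $\mathscr{X}$ restricting to $\mathscr{O}_X$ on the generic fibre. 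Dividing by $e$ then produces the algebraic $w$-adic metric on $\mathscr{O}_X$ itself. The finite extension $K'$ and the place $w\mid v$ enter exactly because the polyhedral structure witnessing piecewise-linearity, and the model realizing it, may only be definable after such a base change; the $\Gal(\ol{K'_w}/K'_w)$-invariance is then built into the fact that the metric comes from a model over $K'^{\circ}_w$.

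The main obstacle is bookkeeping rather than conceptual: one must make sure the \emph{same} finite extension $K'$ and place $w$ can be chosen so that simultaneously (a) the piecewise-$\QQ$-linear structure of $f$ is defined over $K'_w$, (b) the denominator-clearing integer $e$ is absorbed, and (c) the resulting metric is genuinely Galois-invariant over $K'$; but each of these is a routine consequence of the cited results in \cite{Gubler-Martin}, since finite extensions can always be enlarged to accommodate finitely many such requirements at once. I would therefore present the proof as: reduce to the two cases; dispatch the archimedean case by the definition of a metric; and in the non-archimedean case quote \cite[Definition~2.11 and the surrounding discussion]{Gubler-Martin} to obtain, after a finite extension, an algebraic model realizing $ef$, then divide by $e$ to conclude.
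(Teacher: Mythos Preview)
Your overall strategy matches the paper's, but two points deserve correction.

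In the archimedean case, your claim that Galois-invariance ``is the statement that $f$ descends to $X_v^{\an}$, which is automatic'' misreads the conventions. In this paper $X_v^{\an}$ is the analytification over $\CC_v$, and a metric is \emph{defined over $K$} precisely when it is invariant under the action of $\Gal(\ol{K_v}/K_v)$ on that space; this is a genuine condition, not a tautology. When $v$ is a real place, a smooth function on $X(\CC)$ need not be conjugation-invariant, so $K'=K$ will not do in general. The fix is the one your parenthetical nearly states: pass to a finite extension $K'$ admitting a complex place $w\mid v$, so that $\Gal(\ol{K'_w}/K'_w)$ is trivial and invariance becomes vacuous.

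In the non-archimedean case, the structure results you cite from Gubler--Martin produce a finite extension $F$ of the \emph{completion} $K_v$, not of $K$: one obtains $\Gal(\ol{K_v}/F)$-invariance and a model over $F^\circ$. To get the global $K'$ and the place $w$ required by the statement you must realize (the separable part of) $F$ as a completion $K'_w$ for some finite $K'/K$. The paper handles this explicitly: first replace $F$ by $F\cap K_v^{\mathrm{sep}}$, which leaves $\Gal(\ol{K_v}/F)$ unchanged by elementary Galois theory, and then invoke the standard fact (the paper cites Serre, \emph{Local Fields}, p.~30, Exercise~2) that every finite separable extension of $K_v$ arises as some $K'_w$. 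This local-to-global passage is routine but not mere bookkeeping, and your outline silently assumes it when you write ``a model over $K'^{\circ}_w$''. Once $K'$ and $w$ are in hand, the paper concludes algebraicity exactly as you suggest, via the equivalence between piecewise $\QQ$-linear metrics and model metrics.
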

\begin{proof}
First, notice that $v$-adic elementary functions are continuous on $X_v^{\an}$, see \cite[Proposition~2.12(a)]{Gubler-Martin} for the non-archimedean case.
Then, the first claim follows from the fact that there exists a finite field extension $K^\prime$ of $K$, and a place $w$ of $K^\prime$, with~$w\mid v$, for which the function $f$ is invariant under the action of $\Gal(\ol{K^\prime_w}/K^\prime_w)$ on $X_v^{\an}$.
This statement is clear in the archimedean case.
If $v$ is non-archimedean, \cite[Proposition 2.18(b)]{Gubler-Martin} implies that there exists a finite field extension $F$ of $K_v$ such that $f$ is $\Gal(\ol{K_v}/F)$-invariant.
Since the extension $F/(F\cap K_v^{\mathrm{sep}})$ is purely inseparable, such a group of automorphisms coincides with~$\Gal(\ol{K_v}/F\cap K_v^{\mathrm{sep}})$, and the conclusion follows from~\cite[Exercise~2 page~30]{Serre:local-fields}.

For the last claim, notice that the metric is piecewise $\mathbb{Q}$-linear in the sense of~\cite[Definition~2.11]{Gubler-Martin}.
This is equivalent, using the compactness of $X_v^{\an}$, to the existence of a positive integer $e$ such that $\|\cdot\|^e$ is a piecewise linear metric on $L^{\otimes e}$ according to \cite[Definition~2.8]{Gubler-Martin}.
It suffices to apply \cite[Propositions~8.11 and~8.13]{Gubler-Kuennemann:2017} (see also \cite[Theorem~1.1]{Gubler-Martin}) to conclude that $\|\cdot\|^{\otimes e}$ is induced by an algebraic $(K_w^\prime)^\circ$-model of $L_{K^\prime}^{\otimes e}$.
\end{proof}

\begin{remark}
The proof of the previous proposition shows that non-archimedean elementary functions coincide with the model functions of \cite[Definition 3.4]{Yuan:2008}.
\end{remark}

Elementary functions are dense in the set $\mathcal{C}(X_v^{\an},\mathbb{R})$ of real-valued continuous functions on $X_v^{\an}$.

\begin{theorem}\label{density of brick functions}
Let $v\in\fM$. The set of $v$-adic elementary functions is a $\mathbb{Q}$-vector subspace of $\mathcal{C}(X_v^{\an},\mathbb{R})$.
Moreover, it is dense in $\mathcal{C}(X_v^{\an},\mathbb{R})$ with respect to the uniform convergence topology.
\end{theorem}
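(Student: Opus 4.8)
The plan is to split the statement into two parts: first the $\mathbb{Q}$-vector space structure, then the density. The vector space claim is essentially formal. In the archimedean case, $v$-adic elementary functions are by definition smooth functions on the complex manifold $X_v^{\an}$, and smooth functions manifestly form a $\mathbb{Q}$-vector space (indeed an $\mathbb{R}$-algebra). In the non-archimedean case, one invokes the stability properties of piecewise $\mathbb{Q}$-linear functions recorded in \cite[\S2]{Gubler-Martin}: the sum of two piecewise $\mathbb{Q}$-linear functions and a rational scalar multiple of one are again piecewise $\mathbb{Q}$-linear (this can also be seen through \hyperref[prop:elementary functions and metrized trivial line bundle]{Proposition~\ref*{prop:elementary functions and metrized trivial line bundle}}, since such functions correspond, after passing to a finite extension and a tensor power, to algebraic metrics on $\mathscr{O}_X$, and the group of metrized line bundles — hence the associated $-\log\|1\|$ functions — is closed under tensor product and inverse).

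For density, the natural tool is the Stone--Weierstrass theorem applied to the closure $A$ of the space of $v$-adic elementary functions inside $\mathcal{C}(X_v^{\an},\mathbb{R})$, which is a closed $\mathbb{R}$-subalgebra containing the constants (constants are elementary in both cases). It remains to check that elementary functions separate points of $X_v^{\an}$. In the archimedean case this is immediate: smooth functions separate points of a compact manifold. In the non-archimedean case, one uses that $X_v^{\an}$ is the Berkovich analytification of a projective variety, so it embeds into some $\mathbb{P}^{N,\an}_{\CC_v}$, and the model functions associated to suitable algebraic models of $\mathscr{O}(1)$ — equivalently, functions of the form $x\mapsto \log\max_i|f_i(x)|$ built from global sections — separate points; more concretely, for two distinct points $p\neq q$ one produces a rational function $\phi$ (or a ratio of sections of an ample line bundle) with $|\phi(p)|_v\neq|\phi(q)|_v$, and then $\log|\phi|$ (suitably truncated/combined to stay piecewise $\mathbb{Q}$-linear) is elementary and separates them. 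One must also address that these logarithms are a priori only defined away from zeros/poles; this is handled by the standard trick of replacing $\log|\phi|$ by $\log\max(|\phi|,|c|)$ or by working with the difference $\log\|s\|_1 - \log\|s\|_2$ of two algebraic metrics on the same ample line bundle, which is globally defined, piecewise $\mathbb{Q}$-linear, and still separates the two points for an appropriate choice.

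The main obstacle is the point-separation in the non-archimedean setting, where one has to stay within the class of piecewise $\mathbb{Q}$-linear functions rather than merely continuous functions — so the argument cannot just quote "differences of $\log$-absolute-values of rational functions" but must realize these as honest model/algebraic-metric functions on a projective model. Once separation is in hand, Stone--Weierstrass closes the argument. An alternative to Stone--Weierstrass in the non-archimedean case, which avoids discussing the algebra structure carefully, is to cite directly the density of model functions in $\mathcal{C}(X_v^{\an},\mathbb{R})$ from \cite[Lemma~3.5]{Yuan:2008} or \cite[Proposition~3.4]{Gubler:2008}, together with the identification of non-archimedean elementary functions with model functions noted in the remark following \hyperref[prop:elementary functions and metrized trivial line bundle]{Proposition~\ref*{prop:elementary functions and metrized trivial line bundle}}; in the archimedean case density of smooth functions in $\mathcal{C}(X_v^{\an},\mathbb{R})$ for $X_v^{\an}$ a compact real manifold is classical (mollification, or Stone--Weierstrass with coordinate functions). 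I would likely present the proof this way, reducing both cases to citations and spending the bulk of the write-up on the elementary vector-space verification and the bookkeeping of passing between the two normalizations of "elementary".
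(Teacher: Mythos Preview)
Your proposal is correct and follows essentially the same route as the paper: the $\mathbb{Q}$-vector space structure is immediate (the paper cites \cite[Proposition~2.12(b)]{Gubler-Martin} for the non-archimedean sum), and density is obtained via Stone--Weierstrass, with the non-archimedean case delegated to the literature. The paper's precise citations for the non-archimedean density are \cite[Theorem~7.12]{Gubler:localheights} and \cite[Proposition~2.15]{Gubler-Martin} rather than \cite[Lemma~3.5]{Yuan:2008} or \cite[Proposition~3.4]{Gubler:2008} (which concern a different statement), but the strategy is identical.
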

\begin{proof}
When $v$ is non-archimedean, the sum of two $v$-adic elementary functions is again such because of \cite[Proposition~2.12(b)]{Gubler-Martin}.
The other properties can be checked directly from the definitions.

For the second claim, both the archimedean and non-archimedean case are proved using the Stone-Weierstrass theorem.
In particular, the non-achimedean situation is shown in \cite[Theorem~7.12]{Gubler:localheights}, see also \cite[Proposition~2.15]{Gubler-Martin}.
\end{proof}

One of the main techniques in the proof of the equidistribution theorem is to consider slight perturbations of a given metrized line bundle by means of analytic functions.

\begin{definition}\label{perturbed line bundle}
Let $\overline{L}=(L,(\norm{\cdot}_v))$ be a metrized line bundle over $X$, $v_0\in\mathfrak{M}$, $f$ a continuous real-valued $\Gal(\ol{K}_{v_0}/K_{v_0})$-invariant function on $X_{v_0}^{\an}$, and $t\in\mathbb{Q}$.
The \emph{$(v_0,f,t)$-perturbation} of $\overline{L}$ is the line bundle $L$ together with the metric defined by
\begin{equation*}
\|s \|'_{v}:=
\begin{cases}
\norm{s}_v\;e^{-tf}&\text{if }v=v_0,\\
\norm{s}_v&\text{otherwise}
\end{cases}
\end{equation*}
for all local section $s$ of~$L$.
We denote this metrized line bundle by~$\overline{L}(v_0,f,t)$.
\end{definition}

\begin{remark}\label{rmk:elementary functions give perturbation after finite field extension}
Let $\ol{L}$ be a metrized line bundle over $X$, $v_0\in\mathfrak{M}$ and $f$ a $v_0$-adic elementary function.
It follows from the proof of \hyperref[prop:elementary functions and metrized trivial line bundle]{Proposition \ref*{prop:elementary functions and metrized trivial line bundle}} that $f$ is $\Gal(\ol{K^\prime}_{w_0}/K^\prime_{w_0})$-invariant for a certain finite field extension $K^\prime$ of $K$ and~$w_0\mid v_0$.
Then, it determines a $(w_0,f,t)$-perturbation of $\ol{L}$ defined over $K^\prime$ for all~$t\in\mathbb{Q}$.
\end{remark}

Perturbations via elementary functions satisfy the following favorable property.

\begin{lemma}\label{perturbation of DSP qa is DSP qa after a finite field extension}
Let $v_0\in\mathfrak{M}$, $f$ a $v_0$-adic elementary function, and $t\in\QQ$.
Let also $K^\prime$ and $w_0$ be as in \hyperref[rmk:elementary functions give perturbation after finite field extension]{Remark~\ref*{rmk:elementary functions give perturbation after finite field extension}}.
If $\overline{L}$ is a DSP quasi-algebraic metrized line bundle on~$X$, then $\overline{L}(w_0,f, t)$ is a DSP quasi-algebraic metrized line bundle on~$X_{K^\prime}$.
\end{lemma}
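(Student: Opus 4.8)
The strategy is to reduce the statement to the case where $\ol{L}$ is semipositive, and then to the case where $\ol{L}$ is algebraic, and finally to absorb the perturbation into the model. First I would write $\ol{L} \simeq \ol{M} \otimes \ol{N}^{-1}$ with $\ol{M},\ol{N}$ semipositive quasi-algebraic, and observe that $\ol{L}(w_0,f,t) \simeq \ol{M}(w_0,f,t) \otimes \ol{N}^{-1}$ (the perturbation only touches the $w_0$-metric and is additive in $-\log\|\cdot\|$), with $\ol{N}^{-1}$ still DSP quasi-algebraic after base change to $K'$. So it suffices to treat a semipositive quasi-algebraic $\ol{L}$, and moreover, after splitting $f = f^+ - f^-$ is \emph{not} available in general for elementary functions, but we can instead split according to the sign of $t$: if $t \geq 0$ we need $\ol{L}(w_0,f,t)$ DSP; writing it as $\ol{L}\otimes \ol{\scrO}_X(w_0,f,t)$ where $\ol{\scrO}_X(w_0,f,t)$ denotes the trivial bundle with the $(w_0,f,t)$-metric, it is enough to show that $\ol{\scrO}_X(w_0,f,t)$ is DSP quasi-algebraic over $K'$, since the tensor product of DSP quasi-algebraic bundles is DSP quasi-algebraic and $\ol{L}$ remains such after base change.

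So the crux is: the trivial line bundle $\scrO_X$ equipped over $K'$ with the metric $-\log\|1\|_{w_0} = tf$ (and the trivial metric elsewhere) is DSP and quasi-algebraic. By \hyperref[prop:elementary functions and metrized trivial line bundle]{Proposition~\ref*{prop:elementary functions and metrized trivial line bundle}}, after possibly enlarging $K'$ (which is harmless, as $w_0$ was already chosen so that $f$ is $\Gal(\ol{K'}_{w_0}/K'_{w_0})$-invariant by \hyperref[rmk:elementary functions give perturbation after finite field extension]{Remark~\ref*{rmk:elementary functions give perturbation after finite field extension}}), the choice $-\log\|1\| = tf$ determines a genuine $w_0$-adic metric on $\scrO_X$ defined over $K'$; and when $v_0$ is non-archimedean this metric is algebraic, i.e.\ induced by an algebraic $(K'_{w_0})^\circ$-model of $\scrO_X^{\otimes e}$ for some $e \geq 1$. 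Since the metrics at all other places are trivial (hence induced by the trivial model), the resulting metrized line bundle is quasi-algebraic: take $S$ to be the archimedean places of $K'$ together with $w_0$ if $v_0$ is archimedean — wait, more carefully, if $v_0$ is non-archimedean we take $S$ to be the archimedean places, and the model at $w_0$ together with the trivial models elsewhere glue to a global $(K'_S)^\circ$-model; if $v_0$ is archimedean there is nothing to check away from the archimedean places. For the DSP property in the non-archimedean case, an algebraic metric need not itself be semipositive, but any algebraic $v$-adic metric is DSP: by \cite[Theorem~1.1]{Gubler-Martin} and standard facts on formal/model metrics, one can write the model line bundle as a difference of two relatively ample (hence semipositive after passing to a power) model line bundles, so $\ol{\scrO}_X(w_0,f,t)$ is DSP at $w_0$; at all other places the trivial metric is semipositive. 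In the archimedean case, a smooth metric on a line bundle over a projective variety is always DSP (write $f$ as a difference of two smooth functions with positive-definite complex Hessian after adding a large multiple of a fixed smooth strictly psh function coming from an ample line bundle, then subtract).

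The main obstacle I anticipate is precisely this last point: elementary functions do not admit an obvious decomposition as a difference of two functions each giving a \emph{semipositive} metric, so the DSP conclusion cannot be read off directly. In the non-archimedean setting this is handled by the structure theory of model metrics (every algebraic metric is a difference of semipositive algebraic ones, using that any model line bundle is the difference of two relatively ample ones — cf.\ the references to \cite{Gubler-Martin} and \cite{Gubler-Kuennemann:2017} already invoked in \hyperref[prop:elementary functions and metrized trivial line bundle]{Proposition~\ref*{prop:elementary functions and metrized trivial line bundle}}); in the archimedean setting by the classical fact that any smooth function is locally a difference of strictly plurisubharmonic ones and one globalizes using an ample line bundle on $X$. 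Once the DSP-ness of $\ol{\scrO}_X(w_0,f,t)$ over $K'$ is in hand, the quasi-algebraicity is immediate from the construction (algebraic model at $w_0$, trivial models elsewhere, metrics unchanged at archimedean and other finite places), and tensoring with the base change of $\ol{L}$ finishes the proof.
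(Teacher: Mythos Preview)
Your proposal is correct and follows essentially the same route as the paper. Both arguments reduce to showing that the trivially-bundled perturbation $\ol{\scrO}_X(w_0,f,t)$ is DSP and quasi-algebraic over $K'$, and both handle the DSP property at $w_0$ by writing the model line bundle as a difference of ample ones in the non-archimedean case and by absorbing the smooth function into a sufficiently positive metric in the archimedean case.

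Two minor remarks. First, your opening detour---decomposing $\ol{L}$ as $\ol{M}\otimes\ol{N}^{-1}$ and then mentioning a split by the sign of $t$---is unnecessary: once you observe $\ol{L}(w_0,f,t)=\ol{L}_{K'}\otimes\ol{\scrO}_X(w_0,f,t)$ and that tensor products of DSP quasi-algebraic bundles are DSP quasi-algebraic, the reduction is immediate regardless of the sign of $t$ or any decomposition of $\ol{L}$. Second, the paper's argument for quasi-algebraicity of $\ol{L}(w_0,f,t)$ is slightly more direct than yours: rather than gluing models for $\ol{\scrO}_X(w_0,f,t)$, it simply notes that $\ol{L}(w_0,f,t)$ agrees with the quasi-algebraic $\ol{L}_{K'}$ at all but one place, so quasi-algebraicity is automatic (one enlarges the exceptional set $S$ by $\{w_0\}$). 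Your version is fine too, just a touch more elaborate.
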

\begin{proof}
Finite base changes of quasi-algebraic metrized line bundles are again such.
Hence, the quasi-algebricity of $\overline{L}(w_0,f, t)$ follows from the fact that its metric coincides with the one of the extension of $\ol{L}$ to $K^\prime$ for all except one place.

Denote by $\ol{N}$ the line bundle $\mathscr{O}_X$ defined over $K^\prime$, equipped with the $w_0$-adic metric satisfying $-\log\|1\|_{w_0}=tf$, and the trivial metric at all other places of $K^\prime$.
Since
\[
\ol{L}(w_0,f,t)=\ol{L}_{K^\prime}\otimes\ol{N}
\]
and the tensor product of DSP metrized line bundles is again such, we can restrict to prove that $\ol{N}$ is DSP.

The trivial metric is semipositive; this follows from definition at archimedean places, and from~\cite[Proposition~6.4(b)]{Gubler-Kuennemann:positivity} otherwise.
So it is left to show that $\norm{\cdot}_{w_0}$ is DSP.

For the case when $w_0$ is non-archimedean, by \hyperref[prop:elementary functions and metrized trivial line bundle]{Proposition~\ref*{prop:elementary functions and metrized trivial line bundle}} and the closure of elementary functions under rational multiplication, there exists $e\in\mathbb{N}$ for which $\ol{N}^{\otimes e}$ comes from an algebraic line bundle $\scrN$ on a model $\scrX$ over~$(K_{w_0}^\prime)^\circ$.
Then, writing $\scrN$ as a difference of two ample line bundles on $\scrX$ gives that $\ol{N}^{\otimes e}$ is DSP, then the result.
When $w_0$ is archimedean, the smoothness of $f$ implies that $\ol{N}_{w_0}$ is DSP by tensoring by sufficiently positive metrized line bundle.
\end{proof}

The next lemma concerns the variation of the height of a cycle under this kind of perturbations.

\begin{lemma}\label{lemma about perturbed height}
Let $\overline{L}$ be a semipositive quasi-algebraic metrized line bundle over~$X$.
Let $v_0\in\mathfrak{M}$, $f$ a $v_0$-adic elementary function, $t\in\QQ$, and $K^\prime$ and $w_0$ as in \hyperref[rmk:elementary functions give perturbation after finite field extension]{Remark~\ref*{rmk:elementary functions give perturbation after finite field extension}}.
For every $d$-cycle $Z$ of~$X$,
\[
\h_{\overline{L}(w_0,f,t)}(Z)=
\h_{\overline{L}}(Z)+t\,(d+1)\,n_{w_0}\int_{X_{v_0}^{\an}}f\ \c_1(\overline{L}_{v_0})^{\wedge d}\wedge\delta_{Z^{\an}_{v_0}}+t^2P(t),
\]
with $P$ a polynomial with real coefficients and degree $d-1$, depending on~$\ol{L}$, $Z$ and~$f$.
\end{lemma}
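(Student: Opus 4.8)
The plan is to expand the global height of $Z$ with respect to $\ol{L}(w_0,f,t)$ by the multilinearity of heights and the explicit shape of the perturbed metric. Writing $\ol{N}$ for the metrized line bundle $\mathscr{O}_X$ over $K'$ with $-\log\|1\|_{w_0}=f$ and trivial metric elsewhere (so that $\ol{L}(w_0,f,t)=\ol{L}_{K'}\otimes\ol{N}^{\otimes t}$, at least formally; more honestly one works with $\ol{L}(w_0,f,t)^{\otimes q}=\ol{L}_{K'}^{\otimes q}\otimes\ol{N}^{\otimes p}$ when $t=p/q$ and divides back by $q^{d+1}$), the multilinearity of the global height in $d+1$ arguments gives
\[
\h_{\ol{L}(w_0,f,t)}(Z)=\sum_{j=0}^{d+1}\binom{d+1}{j}t^{j}\,\h_{\ol{L}_{K'},\ldots,\ol{L}_{K'},\ol{N},\ldots,\ol{N}}(Z),
\]
with $j$ copies of $\ol{N}$. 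The $j=0$ term is $\h_{\ol{L}}(Z)$ (heights are invariant under finite field extension). The $j\ge 2$ terms collect into $t^2 P(t)$ with $P$ of degree $d-1$; there is no $t^{d+2}$ term since a $(d+1)$-fold intersection product is all that is available. It remains to identify the coefficient of $t^1$, namely $(d+1)\,\h_{\ol{L}_{K'},\ldots,\ol{L}_{K'},\ol{N}}(Z)$.

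For that coefficient I would compute the global height with one copy of $\ol{N}$ using the recursive definition~\eqref{definition of local height} of local heights, choosing the rational section $s_d=1$ of $\mathscr{O}_X$. Since $\div(1)=0$, the first term of~\eqref{definition of local height} vanishes (the height of the empty cycle is $0$), and at each place $v$ one is left with $-\int_{X_v^{\an}}\log\|1\|_{v}\,\c_1(\ol{L}_{v})^{\wedge d}\wedge\delta_{Z_v^{\an}}$. By construction of $\ol{N}$ this integral is zero for every place except $w_0$, where $-\log\|1\|_{w_0}=f$; thus
\[
\h_{\ol{L}_{K'},\ldots,\ol{L}_{K'},\ol{N}}(Z)=n_{w_0}\int_{X_{v_0}^{\an}}f\ \c_1(\ol{L}_{v_0})^{\wedge d}\wedge\delta_{Z_{v_0}^{\an}},
\]
where I have used that the $w_0$-adic analytification of $X_{K'}$ and the $v_0$-adic metric on $L$ agree with $X_{v_0}^{\an}$ and $\ol{L}_{v_0}$ after the finite base change, and that $n_{w_0}$ summed over the $w_0\mid v_0$ recovers the weight $n_{v_0}$ (or, keeping a single chosen $w_0$, that the construction of $\ol{N}$ over $K'$ and Remark~\ref{rmk:elementary functions give perturbation after finite field extension} make the Galois-invariance hold so that the base-changed height is unchanged). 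Multiplying by $\binom{d+1}{1}=d+1$ and by $t$ gives the stated linear term.

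The only genuine subtlety, and the step I expect to be the main obstacle, is the bookkeeping around the finite field extension $K'$ and the rational exponent $t=p/q$: one must check that passing to $K_{K'}'$ does not change the value of any of the global heights appearing (invariance under finite extensions, already recorded after~\eqref{normalized height}), that $\ol{N}$ — hence all the mixed terms — are DSP quasi-algebraic so that their global heights are defined (this is exactly Lemma~\ref{perturbation of DSP qa is DSP qa after a finite field extension} together with its proof), and that the weight normalization $n_{w_0}$ matches the claimed formula. Once these compatibilities are in place, the expansion is purely formal: the degree of $P$ is $d-1$ because each additional factor of $\ol{N}$ lowers the number of $\ol{L}$-factors by one and the top mixed term corresponds to $j=d+1$, i.e.\ to $t^{d+1}$, which sits inside $t^2P(t)$ precisely when $d+1\ge 2$, consistent with $\deg P=d-1$. $\hfill\qed$
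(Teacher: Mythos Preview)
Your proposal is correct and follows essentially the same route as the paper: define $\ol{N}$ as the $(w_0,f,1)$-perturbation of the trivially metrized $\mathscr{O}_X$, expand $\h_{\ol{L}\otimes\ol{N}^{\otimes t}}(Z)$ binomially by multilinearity, and identify the constant, linear, and higher-order terms exactly as you do.

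The one place where the paper is more careful than your write-up is the computation of the linear coefficient $\h_{\ol{L},\ldots,\ol{L},\ol{N}}(Z)$. You apply the recursive formula~\eqref{definition of local height} directly with $s_d=1$ as a section of $\ol{N}$. Strictly speaking, that formula is only stated for \emph{semipositive} metrized line bundles, and $\ol{N}$ is merely DSP; the extension to DSP is by multilinearity, not by decreeing that~\eqref{definition of local height} continues to hold verbatim. The paper therefore writes $\ol{N}\simeq\ol{M}_1\otimes\ol{M}_2^{-1}$ with $\ol{M}_1,\ol{M}_2$ semipositive, picks a single rational section $s$ of $M_1$ (hence also of $M_2$), applies~\eqref{definition of local height} to each, and observes that the $\div(s)\cdot Z$ terms cancel while the integrals combine to $-\int\log\|1\|_{\ol{N},w_0}\,\c_1(\ol{L}_{w_0})^{\wedge d}\wedge\delta_{Z_{w_0}^{\an}}$. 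This is precisely the justification your shortcut needs, and once you insert it your argument and the paper's coincide. Your remarks on the $p/q$ bookkeeping, the invariance under the finite extension $K'/K$, and the identification of the $w_0$- and $v_0$-integrals match the paper's treatment.
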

\begin{proof}
Denote by $\ol{N}$ the $(w_0,f,1)$-perturbation of the trivial metrized line bundle, 
so that $\ol{L}(w_0,f,t)=\ol{L}\otimes \ol{N}^{\otimes t}$.
By \hyperref[perturbation of DSP qa is DSP qa after a finite field extension]{Lemma~\ref*{perturbation of DSP qa is DSP qa after a finite field extension}}, $\ol{N}$ is DSP and quasi-algebraic on~$X_{K^\prime}$, hence we can write $\ol{N}\simeq \ol{M}_1\otimes \ol{M}_2^{-1}$, where $\ol{M}_1$ and $\ol{M}_2$ are semipositive quasi-algebraic metrized line bundles defined over~$K^\prime$.

Let $s$ be a rational section of $M_1$ that intersects $Z$ properly; 
it is also a rational section of $M_2$.
Then, by multilinearity on $\ol{N}$ and the inductive definition of height,
\begin{equation*}
\begin{split}
\h_{\ol{L},\ldots,\ol{L},\ol{N}}(Z)
&=\h_{\ol{L},\ldots,\ol{L},\ol{M}_1}(Z)-\h_{\ol{L},\ldots,\ol{L},\ol{M}_2}(Z)\\
&=-\sum_{w\in\fM_{K^\prime}}n_{w} \int_{X_{w}^{\an}}\log\frac{\norm{s}_{M_1,w}}{\norm{s}_{M_2,w}}\;\c_1(\ol{L}_w)^{\wedge d}\wedge\delta_{Z_{w}^{\an}}\\
&=- n_{w_0}\int_{X_{w_0}^{\an}}\log\norm{1}_{\ol{N},w_0}\;\c_1(\ol{L}_{w_0})^{\wedge d}\wedge\delta_{Z_{w_0}^{\an}}\\
&=n_{w_0}\int_{X_{w_0}^{\an}}f\ \c_1(\overline{L}_{w_0})^{\wedge d}\wedge\delta_{Z^{\an}_{w_0}}.
\end{split}
\end{equation*}

Since the global height of $d$-cycles is symmetric and multilinear in the metrized line bundles, the above equality yields
\begin{equation*}
\begin{split}
\h_{\overline{L}(w_0,f,t)}(Z)
&=\h_{\overline{L}\otimes\ol{N}^{\otimes t}}(Z)
=\sum_{\ell=0}^{d+1}\binom{d+1}{\ell}\;\h_{\overline{L},\dots,\overline{L},\underbrace{\scriptstyle\overline{N}^{\otimes t},\dots,\overline{N}^{\otimes t}}_\ell}(Z)\\
&=\sum_{\ell=0}^{d+1}\binom{d+1}{\ell}\;t^{\ell}\;\h_{\overline{L},\dots,\overline{L},\underbrace{\scriptstyle\overline{N},\dots,\overline{N}}_\ell}(Z)\\
&=\h_{\overline{L}}(Z)+t\,(d+1)\,n_{w_0}\int_{X_{w_0}^{\an}}f\, \c_1(\overline{L}_{w_0})^{\wedge d}\wedge\delta_{Z_{w_0}^{\an}}+t^2P(t).
\end{split}
\end{equation*}
To conclude the proof, notice that since $w_0\mid v_0$ and both $L$ and $Z$ are defined over~$K$, the integral coincides with the one over~$X_{v_0}^{\an}$.
\end{proof}

\subsection{Positivity in arithmetic geometry}

Let $X$ be a projective variety over a field~$K$. In this subsection, we recall different notions of positivity in algebraic geometry and their arithmetic counterparts.

A line bundle $L$ on $X$ is said to be \emph{nef} if $\deg_L(C)\geq 0$ for every curve $C$ in~$X$. By Kleiman's theorem, see \cite[Theorem 1.4.9]{Lazarsfeld:positivityI}, this is equivalent to the fact that $\deg_L(Y)\geq0$ for all subvarieties $Y$ of $X$. Proper pull-backs, tensor products and positive powers of nef line bundles are again nef.
A line bundle $L$ is said to be \emph{semiample} if $L^{\otimes n}$ is globally generated for some $n>0$.
Notice that semiample line bundles are nef.

The \emph{volume} of a line bundle $L$ is defined as the nonnegative real number
\[
\vol(L):=\limsup_{n\rightarrow \infty}\frac{\dim\mathrm{H}^0(X,L^{\otimes n})}{n^{\dim(X)}/\dim(X)!};
\]
and $L$ is said to be \emph{big} if $\vol(L)>0$.
If $L$ is nef, \cite[Corollary 1.4.41]{Lazarsfeld:positivityI} asserts that $\vol(L)=\deg_L(X)$. In particular, if $L$ is big and nef, then $\deg_L(X)>0$.
\\Finally, big and nef line bundles rejoice the useful property that the degree of generic subvarieties is strictly positive, in the sense of the following proposition.

\begin{proposition}\label{prop:generic-big}
Let $X$ be a projective variety over a field $K$ and $L$ a big and nef line bundle on $X$. Then, there exists a Zariski closed subset $H_0\subseteq X_{\overline{K}}$ of codimension~$1$,
such that for every subvariety $Y$ of $X_{\overline{K}}$ that is not contained in $H_0$ one has~$\deg_L(Y)>0$.
\end{proposition}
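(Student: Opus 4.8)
The plan is to reduce to the case where $L$ is globally generated by passing to a suitable power and using a Fujita-type approximation, and then to exploit the fact that a generic member of a base-point-free linear system is a prime divisor on which the degree drops by a factor equal to the leading coefficient. First I would recall that, since $L$ is big, by Fujita's approximation theorem (or more elementarily, by Kodaira's lemma) one can write $L^{\otimes n} \simeq A \otimes E$ for some $n>0$, with $A$ ample and $E$ effective; let $H_0$ be the support of the divisor $E$. This $H_0$ is a proper Zariski closed subset because $L$ is big, hence not a subsheaf of a torsion sheaf, so $E$ cannot be all of $X$. Then for any subvariety $Y \nsubseteq H_0$, the divisor $E$ intersects $Y$ properly, and $\deg_L(Y) = \tfrac{1}{n^{\dim Y}}\deg_{L^{\otimes n}}(Y) = \tfrac{1}{n^{\dim Y}}\bigl(\deg_A(Y) + \deg_E(Y)\bigr)$ using multilinearity of degrees, where I interpret $\deg_E(Y)$ via intersection with the effective cycle $\div$ of the section cutting $E$. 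Since $A$ is ample, $\deg_A(Y) > 0$ by Nakai–Moishezon, and $\deg_E(Y) \geq 0$ because $E$ is effective and meets $Y$ properly; hence $\deg_L(Y) > 0$.

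An alternative, perhaps cleaner, route avoids Fujita approximation: since $L$ is big and nef, $\vol(L) = \deg_L(X) > 0$ by \cite[Corollary 1.4.41]{Lazarsfeld:positivityI} as already noted in the excerpt; one then inducts on $\dim X$. For the base case $\dim X = 0$ the statement is vacuous. For the inductive step, choose $n$ large enough that $h^0(X, L^{\otimes n})$ is large, write $L^{\otimes n} \simeq A \otimes \mathcal{O}_X(E)$ with $A$ ample and $E$ effective as above, set $H_0 = \supp(E)$, and argue exactly as in the previous paragraph; no genuine induction is even needed once the decomposition is in hand. The role of nef-ness here is mostly to guarantee $\deg_L(X) > 0$ so that the statement is not vacuous for $Y = X$; the strict positivity for lower-dimensional $Y$ comes entirely from bigness via the ample-plus-effective splitting.

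The main obstacle is purely one of bookkeeping: making sure that the effective part $E$ in the decomposition $L^{\otimes n} \simeq A \otimes \mathcal{O}_X(E)$ meets every $Y \nsubseteq \supp(E)$ properly, so that $\deg_E(Y) = \deg_{A,\ldots,A}(\div(s_E)\cdot Y)$ is a well-defined nonnegative intersection number and the multilinearity $\deg_{L^{\otimes n}}(Y) = \deg_{A \otimes \mathcal{O}_X(E)}(Y) = \sum_k \binom{\dim Y}{k}\deg_{A,\ldots,A,E,\ldots,E}(Y)$ applies term by term. This is automatic from the definition of $H_0$, since $Y \nsubseteq H_0 = \supp(E)$ forces $E \cap Y$ to have codimension at least one in $Y$. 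A secondary point to verify is that $\deg_A(Y) > 0$ genuinely holds for \emph{all} subvarieties $Y$, including $Y$ of dimension $0$ (where it equals $\deg_A(\mathrm{pt}) > 0$ trivially) and $Y = X$; this is just Nakai–Moishezon, or one may cite \cite[Theorem 1.4.9]{Lazarsfeld:positivityI} together with the observation that ample implies the degree of every subvariety is positive. Putting $H_0 := \supp(E)$ then finishes the proof.
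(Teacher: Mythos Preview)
Your approach via Kodaira's lemma—writing $L^{\otimes n} \simeq A \otimes \mathcal{O}_X(E)$ with $A$ ample, $E$ effective, and $H_0 = \supp(E)$—is the right starting point and is in fact what underlies the reference the paper cites. But there is a real gap, and your assessment that nefness only serves to handle $Y=X$ is wrong. The problem is the claim that every mixed term $\deg_{A,\ldots,A,E,\ldots,E}(Y)$ in the binomial expansion of $(A+E)^{\dim Y}\cdot Y$ is nonnegative. For a single copy of $E$ this is fine: $E\cdot Y$ is an effective cycle since $Y\nsubseteq\supp(E)$, and intersecting further with the ample $A$ stays $\geq 0$. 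But with two or more copies of $E$ the argument fails: the components of $E\cdot Y$ may well lie inside $\supp(E)$, and since $\mathcal{O}_X(E)$ is merely effective, not nef, further intersection with it can be negative (e.g.\ $E|_Y$ a $(-1)$-curve on a surface $Y$, giving $(E|_Y)^2<0$). Bigness alone does not rescue this: there exist big line bundles on surfaces with negative top self-intersection.

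The fix uses nefness essentially. Since $Y\nsubseteq\supp(E)$, the restriction $E|_Y$ is an effective divisor on $Y$, so $L^{\otimes n}|_Y = A|_Y \otimes \mathcal{O}_Y(E|_Y)$ is ample plus effective and hence $L|_Y$ is big. Now invoke that $L$, and therefore $L|_Y$, is nef, so $\deg_L(Y) = (L|_Y)^{\dim Y} = \vol(L|_Y) > 0$. This is precisely \cite[Corollary~2.2.11]{Lazarsfeld:positivityI} combined with the big-and-nef volume formula, which is the paper's one-line proof.
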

\begin{proof}
As $L$ is defined over $K$ and the degree is invariant under base field extension and Galois action, the restriction of $L_{\overline{K}}$ to each irreducible component of $X_{\overline{K}}$ is again big and nef.
Then, the statement follows from the fact that the generic restriction of a big line bundle on an irreducible projective variety is again big, see~\cite[Corollary~2.2.11]{Lazarsfeld:positivityI}.
\end{proof}

\begin{remark}
Notice that when $L$ is ample, the strict positivity of the degree holds for every subvariety $Y$ of $X$ by the Nakai-Moishezon criterion, see~\cite[Theorem~1.2.23]{Lazarsfeld:positivityI}.
\end{remark}

There exist analogous notions for nefness and bigness  in the arithmetic case.
Let $\ol{L}$ be a semipositive quasi-algebraic metrized line bundle on~$X$.
We say that $\ol{L}$ is \emph{(arithmetically) nef} 
if $L$ is nef and $\h_{\ol{L}}(p)\geq  0$ for every closed point $p$ in~$X$.

A global section $s\in \mathrm{H}^0(X,L)$ is called \emph{$\ol{L}$-small} if
\begin{equation}
\log\sup\norm{s}_v\leq 0\quad\mbox{for every }v\in\fM.
\end{equation}
In the context of Arakelov geometry, such sections are the arithmetic analogue of global sections in the geometric case.
This analogy is strengthened by the following observation, which characterizes the small sections of an algebraically metrized line bundle on a variety defined over a function field.

\begin{remark}\label{remark:small sections over function fields}
When $K$ is a function field, and $\ol{L}$ is endowed with the algebraic metric coming from a model $\mathscr{L}$, $\ol{L}$-small sections are identified with global sections of $\mathscr{L}$, see the proof of \cite[Proposition 2.2]{Chambert-Loir-Thuillier:equidistribution-logarithmique}.
\end{remark}

Moriwaki introduced in~\cite{Moriwaki:Continuity} the notion of \emph{arithmetic volume} of a quasi-algebraic metrized line bundle~$\ol{L}$, which is defined as
\[
\widehat{\vol}(\ol{L}):=\limsup_{n\rightarrow \infty}\frac{\log\#\big\{s\in\mathrm{H}^0\big(X,{L}^{\otimes n}\big)\,\vert\; s\mbox{ is $\ol{L}^{\otimes n}$-small}\big\}}{n^{\dim(X)+1}/(\dim(X)+1)!}
\]
in analogy with the geometric situation.

This is especially useful when $K$ is a number field, where small sections are more delicate to control than in the setting of Remark~\ref{remark:small sections over function fields}.
In this case, assuming that $\ol{L}$ is algebraically metrized, Chen  showed that the arithmetic volume is in fact a limit, see~\cite[Theorem 5.2]{Chen:Positive-degree}.

\begin{remark}\label{arithmetic volume over function fields}
Let $K$ be the function field of a smooth projective curve defined over a finite field $k$,
and $\ol{L}$ an algebraically metrized line bundle on $X$ defined by the model $(\mathscr{X},\mathscr{L})$.
Then $\widehat{\vol}(\ol{L})=\log(\#k) \vol(\mathscr{L})$.
\end{remark}

The relevance of the existence of small sections for some integer power of a metrized line bundle leads to the definition of arithmetic bigness.
Following \cite[Definition~2.1]{Yuan:2008}, a quasi-algebraic metrized line bundle $\ol{L}$ is said to be \emph{(arithmetically) big} if $\widehat{\vol}(\ol{L})>0$.
The reader is referred to \cite[\S2]{Moriwaki:Finitely-generated}, \cite[\S2.2]{Yuan:2008} and \cite{Moriwaki:Continuity} for equivalent definitions and properties regarding this notion.

\begin{remark}
If $K$ is a number field, and $\ol{L}$ is a big algebraic metrized line bundle on $X$, then $L$ is geometrically big; see~\cite[Proposition~5.1]{Chen:Positive-degree}.
\end{remark}

The following is a consequence to the generalized Hodge index theorem of Moriwaki, see~\cite[Corollary~6.14]{Moriwaki:Arakelov}, together with the continuity property of the arithmetic volume given by \cite[Theorem~5.1]{Moriwaki:Continuity2} and of the height function proven in Lemma~\ref{lemma:Lipschitz}.

\begin{lemma}\label{lemma:Moriwaki}
Let $K$ be a number field, and $\ol{L}$ be a semipositive algebraic  metrized line bundle on~$X$.
Then
\[
\widehat{\vol}(\ol{L})\geq \h_{\ol{L}}(X).
\]
In particular, if $\h_{\ol{L}}(X)>0$, then $\ol{L}$ is big.
\end{lemma}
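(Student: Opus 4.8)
The plan is to reduce the inequality $\widehat{\vol}(\ol{L})\geq \h_{\ol{L}}(X)$ to Moriwaki's arithmetic Hodge index theorem \cite[Corollary~6.12]{Moriwaki:Arakelov}. Since $K$ is a number field and $\ol{L}$ is a semipositive algebraic metrized line bundle, the generalized Hodge index inequality gives a lower bound for the arithmetic self-intersection number of $\ol{L}$, i.e.\ for $\h_{\ol{L}}(X)=\ol{L}^{\cdot(\dim X+1)}$ up to the normalizing factors hidden in our height conventions, in terms of an arithmetic volume-type quantity. More precisely, Moriwaki's result compares the top self-intersection of a nef adelically metrized line bundle with the geometric volume of its ``positive part'' and, after the appropriate bookkeeping, yields $\widehat{\vol}(\ol{L})\geq \h_{\ol{L}}(X)$ directly. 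The first step is therefore to match the normalizations: our $\h_{\ol{L}}(X)$ is defined in Subsection~1.B via the recursive local heights, and one has to check that under the algebraic hypothesis this coincides (up to the factor $(\dim X+1)!$ absorbed into the definition of $\widehat{\vol}$) with the arithmetic intersection number appearing in \cite{Moriwaki:Arakelov}.

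Next I would invoke Chen's theorem \cite[Theorem~5.2]{Chen:Positive-degree}, already recalled in the excerpt, so that $\widehat{\vol}(\ol{L})$ is an honest limit rather than merely a $\limsup$; this is not strictly needed for the inequality but clarifies the comparison. Then the core step is purely a citation: apply \cite[Corollary~6.12]{Moriwaki:Arakelov} to $\ol{L}$, using that semipositivity of the algebraic metric at every place (archimedean and non-archimedean) is exactly the hypothesis under which the arithmetic Hodge index theorem applies. The ``in particular'' clause is then immediate: if $\h_{\ol{L}}(X)>0$, then $\widehat{\vol}(\ol{L})>0$, which is the definition of arithmetic bigness recalled just above the lemma.

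The main obstacle I anticipate is purely notational: reconciling the sign and normalization conventions between the height $\h_{\ol{L}}$ as set up in Subsection~1.B (via Chambert-Loir measures and the recursive formula \eqref{definition of local height}, with weights $n_v$) and the intersection-theoretic formalism in Moriwaki's paper, where heights of arithmetic varieties over $\Spec\mathcal{O}_K$ are computed via arithmetic Chow groups. One must verify that both compute the same real number for $X$ with respect to a semipositive algebraic $\ol{L}$; this is standard (it is the comparison between the adelic and the classical Arakelov-theoretic height, cf.\ the references in Subsection~1.B), but it is the only place where genuine care is required. Once that dictionary is fixed, the lemma follows formally.
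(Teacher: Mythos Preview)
Your proposal is correct and matches the paper's approach exactly: the paper does not give a proof beyond stating that the lemma is ``a consequence to the generalized Hodge index theorem of Moriwaki, see \cite[Corollary~6.12]{Moriwaki:Arakelov},'' which is precisely the citation you identify as the core step. Your additional remarks about reconciling normalizations and the optional use of Chen's theorem are reasonable elaborations that the paper omits.
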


Analogously, one proves the following extension of the translation made by Moriwaki in~\cite[Remark~6.5]{Moriwaki:Arakelov} of Yuan's version of Siu's theorem~\cite{Yuan:2008}.

\begin{lemma}\label{lemma:Siu-aritmetico}
Let $K$ be a number field, and $\ol{L}_1$, $\ol{L}_2$ be nef semipositive algebraic metrized line bundles on~$X$.
Then
\[
\widehat{\vol}(\ol{L}_1\otimes\ol{L}_2^{-1})\geq \h_{\ol{L}_1}(X)-(\dim(X)+1)\,\h_{\ol{L}_1,\ldots,\ol{L}_1,\ol{L}_2}(X).
\]
\end{lemma}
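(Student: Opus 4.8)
The plan is to read off this bound from Yuan's arithmetic analogue of Siu's theorem, after translating arithmetic intersection numbers into the global heights of Subsection~1.B and performing a routine reduction from arithmetic ampleness to the nef semipositive hypothesis of the statement. Concretely, I would first invoke \cite[Theorem~2.2]{Yuan:2008}: for \emph{arithmetically ample} metrized line bundles $\ol{M}_1,\ol{M}_2$ on $X$, with $n:=\dim(X)$,
\[
\widehat{\vol}(\ol{M}_1\otimes\ol{M}_2^{-1})\ \geq\ \widehat{\deg}\big(\widehat{\c}_1(\ol{M}_1)^{\,n+1}\big)-(n+1)\,\widehat{\deg}\big(\widehat{\c}_1(\ol{M}_1)^{\,n}\,\widehat{\c}_1(\ol{M}_2)\big).
\]
For semipositive algebraic metrized line bundles the arithmetic intersection number $\widehat{\deg}\big(\widehat{\c}_1(\ol{N}_0)\cdots\widehat{\c}_1(\ol{N}_n)\big)$ on an arithmetic model of $X$ agrees with the global height $\h_{\ol{N}_0,\ldots,\ol{N}_n}(X)$ of $X$ seen as an $n$-cycle (the compatibility underlying the formalism of Subsection~1.B; compare Remark~\ref{rmk:height=degree} for the function field side), so Yuan's inequality becomes
\[
\widehat{\vol}(\ol{M}_1\otimes\ol{M}_2^{-1})\ \geq\ \h_{\ol{M}_1}(X)-(n+1)\,\h_{\ol{M}_1,\ldots,\ol{M}_1,\ol{M}_2}(X),
\]
with $n$ copies of $\ol{M}_1$ on the right. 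This is exactly the claimed estimate in the arithmetically ample case.

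Next I would remove the ampleness assumption by a perturbation. Fix an arithmetically ample algebraic metrized line bundle $\ol{A}$ on $X$, which exists because $X$ is projective over the number field~$K$, and for rational $\varepsilon>0$ put $\ol{L}_i(\varepsilon):=\ol{L}_i\otimes\ol{A}^{\otimes\varepsilon}$ (after clearing denominators, an honest tensor power). As an arithmetically nef metrized line bundle twisted by an arithmetically ample one, each $\ol{L}_i(\varepsilon)$ is arithmetically ample, so the ample case applied to $\big(\ol{L}_1(\varepsilon),\ol{L}_2(\varepsilon)\big)$, together with the identity $\ol{L}_1(\varepsilon)\otimes\ol{L}_2(\varepsilon)^{-1}=\ol{L}_1\otimes\ol{L}_2^{-1}$ (so that the left-hand side is independent of $\varepsilon$), gives
\[
\widehat{\vol}(\ol{L}_1\otimes\ol{L}_2^{-1})\ \geq\ \h_{\ol{L}_1(\varepsilon)}(X)-(n+1)\,\h_{\ol{L}_1(\varepsilon),\ldots,\ol{L}_1(\varepsilon),\ol{L}_2(\varepsilon)}(X)
\]
for all such $\varepsilon$. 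By multilinearity of global heights in the metrized line bundles, the right-hand side is a polynomial in $\varepsilon$ whose constant term is $\h_{\ol{L}_1}(X)-(n+1)\,\h_{\ol{L}_1,\ldots,\ol{L}_1,\ol{L}_2}(X)$; letting $\varepsilon\to0^+$ along the rationals then yields the lemma. Alternatively, one may cite \cite[Remark~6.5]{Moriwaki:Arakelov} directly, where this limiting step is carried out.

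The essential difficulty is not in this bookkeeping but is contained in the cited input, namely Yuan's theorem itself, which rests on his arithmetic Siu inequality and on Chen's theorem \cite{Chen:Positive-degree} that $\widehat{\vol}$ is a limit. In the reduction, the only points requiring care are checking that the twists $\ol{L}_i(\varepsilon)$ are indeed arithmetically ample and that the arithmetic-intersection/height dictionary is applied with the correct normalization of the mixed term $\widehat{\c}_1(\ol{M}_1)^{n}\widehat{\c}_1(\ol{M}_2)$; in particular no continuity property of $\widehat{\vol}$ is needed here, since the perturbing twist cancels on the left-hand side.
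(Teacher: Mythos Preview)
Your proposal is correct and follows exactly the route the paper indicates: the paper does not give an independent proof but simply cites \cite[Remark~6.5]{Moriwaki:Arakelov}, where Moriwaki carries out precisely the translation of Yuan's arithmetic Siu inequality \cite[Theorem~2.2]{Yuan:2008} into this language, including the passage from arithmetically ample to nef by the perturbation argument you describe. Your write-up is in fact more detailed than what the paper provides.
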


%
%
%
%
%
%
%
%
%
%
%
%
%
%
%
%
%
%
%
%
%
%
%
%
%
%
%
%
%
%
%
%
%
%
%
%
%
%

\section{Higher dimensional essential minima}

Let $X$ be a projective variety over~$K$, and $\ol{L}=(L,(\norm{\cdot}_v))$ a semipositive quasi-algebraic  metrized line bundle on~$X$. In this section we define the successive minima of $X$ with respect to $\ol{L}$ for arbitrary dimensional subvarieties and we prove some basic properties of them, mainly focusing on the essential minimum. We work analytically with algebraically closed complete fields, reason for which we refer to the treatment of~\cite{Gubler-Hertel}.

\subsection{Correcting integrals}

Let $Y$ be a  subvariety of $X_{\ol{K}}$ of dimension $d$ and $s$ a nonzero rational section of an integer power $L_{\ol{K}}^{\otimes n}$ of the line bundle~$L$, satisfying $Y\nsubseteq|\div(s)|$.
After choosing a finite field extension $K^\prime$ of $K$ over which $Y$ and $s$ are defined, we set, for every~$w\in\fM_{K^\prime}$,
\begin{equation}\label{definition of local integrals}
I_{\ol{L}_w}(Y,s):=\frac{1}{n}\int_{X_w^{\an}}-\log\|s\|_w^{\otimes n}\ \c_1(\ol{L}_w)^{\wedge d}\wedge\delta_{Y^{\an}_w}.
\end{equation}
It is a well defined real number because of~\cite[Theorem~I]{Gubler-Hertel}.
Moreover, it is invariant under the tensor powering application $s\mapsto s^{\otimes m}$ from the set of rational sections of $L_{K^\prime}^{\otimes n}$ to the ones of $L_{K^\prime}^{\otimes nm}$, for all $n,m\in\mathbb{N}_{>0}$.
As $\overline{L}$ is quasi-algebraic, \cite[Theorem~3.1.13]{Gubler-Hertel} ensures that the function $w\mapsto |I_{\ol{L}_w}(Y,s)|$ is summable on $\mathfrak{M}_{K^\prime}$, so one can define the real number
\begin{equation}\label{definition of sums of integrals}
I_{\ol{L}}(Y,s):=\sum_{w\in\mathfrak{M}_{K^\prime}}n_w\,I_{\ol{L}_w}(Y,s).
\end{equation}

\begin{remark}\label{correcting integrals as height jumps}
It follows from the global induction formula of~\cite[Theorem~3.1.13]{Gubler-Hertel} and the multilinearity of the height with respect to the choice of metrized line bundles that
\[
I_{\overline{L}}(Y,s)=\h_{\ol{L}}(Y)-\frac{\h_{\ol{L}}(\div(s)\cdot Y)}{n},
\]
for every rational section $s$ of $L_{\ol{K}}^{\otimes n}$ with $Y\nsubseteq|\div(s)|$.
In particular, $I_{\ol{L}}(Y,s)$ is independent on the choice of~$K^\prime$.
\end{remark}

Consider a $d$-cycle $Z$ of~$X_{\ol{K}}$ and a section $s$ of $L_{\ol{K}}^{\otimes n}$ intersecting $Z$ properly, that is, no summand of the base change of $Z$ to $\ol{K}$ is contained in $|\div(s)|$.
Then, \eqref{definition of sums of integrals} extends linearly to define~$I_{\ol{L}}(Z,s)$.

We can readily compute the influence of the perturbation of a metric as in Subsection~1.C on the correcting integrals.

\begin{lemma}\label{correcting integrals for perturbed metrics}
Let $\ol{L}$ be a semipositive quasi-algebraic metrized line bundle on~$X$, and $Z$ a $d$-cycle of~$X$.
Let $v_0\in\mathfrak{M}$, $f$ a $v_0$-adic elementary function, $t\in\QQ$, and $K^\prime$ and $w_0$ as in \hyperref[rmk:elementary functions give perturbation after finite field extension]{Remark~\ref*{rmk:elementary functions give perturbation after finite field extension}}.
Then,
\[
I_{\ol{L}}(Z,s)
=
I_{\ol{L}(w_0,f,t)}(Z,s)
-n_{w_0}\, t\,\int_{X_{v_0}^{\an}}f\ \c_1(\ol{L}_{v_0})^{\wedge d}\wedge\delta_{Z_{v_0}^{\an}}
\]
for every rational section $s$ of $L_{\ol{K}}^{\otimes n}$ intersecting~$Z$ properly.
\end{lemma}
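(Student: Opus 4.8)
The plan is to reduce the claim to the corresponding statement for $I_{\overline{L}_w}(Y,s)$ at each place and then sum. By linearity in $Z$, I may assume that $Z=Y$ is a $d$-dimensional subvariety of $X_{\overline{K}}$, and I fix a finite field extension $K'$ of $K$ over which $Y$ and $s$ are defined, enlarging it if necessary so that it also contains the field $K'$ of \hyperref[rmk:elementary functions give perturbation after finite field extension]{Remark~\ref*{rmk:elementary functions give perturbation after finite field extension}}; this way the $(w_0,f,t)$-perturbation is defined over $K'$ and $f$ is $\Gal(\overline{K'_{w_0}}/K'_{w_0})$-invariant. Both $I_{\overline{L}}(Y,s)$ and $I_{\overline{L}(w_0,f,t)}(Y,s)$ are then genuine sums over $w\in\fM_{K'}$ of the local integrals, as in \eqref{definition of sums of integrals}.

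Next I would carry out the local computation. By \hyperref[definition perturbed line bundle]{Definition~\ref*{perturbed line bundle}}, the perturbation changes only the $w_0$-adic metric and leaves all other metrics untouched. In particular, for $w\neq w_0$ the local integral $I_{\overline{L}_w}(Y,s)$ is unaffected. For $w=w_0$, the perturbed metric satisfies $\|s\|'_{w_0}=\|s\|_{w_0}\,e^{-tf}$ on sections of $L$, hence $\|s\|'^{\otimes n}_{w_0}=\|s\|^{\otimes n}_{w_0}\,e^{-ntf}$ on sections of $L^{\otimes n}$, so that $-\log\|s\|'^{\otimes n}_{w_0}=-\log\|s\|^{\otimes n}_{w_0}+ntf$. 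Moreover the first Chern measure $\c_1(\overline{L}_{w_0})^{\wedge d}\wedge\delta_{Y^{\an}_{w_0}}$ does not change under the perturbation, since perturbing by $e^{-tf}$ only alters the metric on $\scrO_X$ and $\c_1$ of the metrized trivial bundle vanishes; alternatively, it follows from the multilinearity in \hyperref[prop:measure-properties]{Proposition~\ref*{prop:measure-properties}(2)} that the measure depends only on $L$ and the isometry class of the metric up to the trivial bundle, which is unchanged here. Dividing by $n$ as in \eqref{definition of local integrals}, I get
\[
I_{\overline{L}(w_0,f,t)_{w_0}}(Y,s)=I_{\overline{L}_{w_0}}(Y,s)+t\int_{X_{w_0}^{\an}}f\ \c_1(\overline{L}_{w_0})^{\wedge d}\wedge\delta_{Y^{\an}_{w_0}}.
\]

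Finally I would sum over $w\in\fM_{K'}$ with the weights $n_w$. Only the $w_0$-term changes, giving
\[
I_{\overline{L}(w_0,f,t)}(Y,s)=I_{\overline{L}}(Y,s)+n_{w_0}\,t\int_{X_{w_0}^{\an}}f\ \c_1(\overline{L}_{w_0})^{\wedge d}\wedge\delta_{Y^{\an}_{w_0}},
\]
which rearranges to the stated formula; the passage from the integral over $X_{w_0}^{\an}$ to the one over $X_{v_0}^{\an}$ is justified exactly as at the end of the proof of \hyperref[lemma about perturbed height]{Lemma~\ref*{lemma about perturbed height}}, using that $w_0\mid v_0$ and that $L$, $Y$ and $f$ descend to $K$. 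The only point needing genuine care is the invariance of the Chambert--Loir measure under the perturbation, but this is immediate since the perturbation differs from $\overline{L}$ by a tensor factor that is $\scrO_X$ with a (trivial-at-all-but-one-place) metric, whose first Chern form is zero; alternatively, one may observe that this lemma is really the first-order specialization of \hyperref[lemma about perturbed height]{Lemma~\ref*{lemma about perturbed height}} combined with \hyperref[correcting integrals as height jumps]{Remark~\ref*{correcting integrals as height jumps}}, applied to $Y$ and to $\div(s)\cdot Y$, which provides an alternative route avoiding any new estimate.
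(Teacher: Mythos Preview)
Your argument follows the paper's own (very terse) proof almost exactly: pass to a common finite extension, observe that only the $w_0$-term changes, compute that the integrand shifts by $ntf$, and sum. The paper says essentially no more than this.

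The one point where your justification is wrong is the claim that ``$\c_1$ of the metrized trivial bundle vanishes''. It does not: at an archimedean place $\c_1(\scrO_X,e^{-tf})$ is the current $t\,dd^cf$, and at a non-archimedean place a nontrivial model of $\scrO_X$ generally contributes nonzero intersection numbers on the special fibre. Hence $\c_1(\overline{L}(w_0,f,t)_{w_0})^{\wedge d}\neq\c_1(\overline{L}_{w_0})^{\wedge d}$ in general. Your suggested alternative route via \hyperref[lemma about perturbed height]{Lemma~\ref*{lemma about perturbed height}} and \hyperref[correcting integrals as height jumps]{Remark~\ref*{correcting integrals as height jumps}} does not rescue this either: applying them to $Y$ and to $\div(s)\cdot Y$ yields
\[
t\,n_{w_0}\Big((d+1)\!\int f\,\c_1(\overline{L})^{\wedge d}\wedge\delta_Y-\tfrac{d}{n}\!\int f\,\c_1(\overline{L})^{\wedge(d-1)}\wedge\delta_{\div(s)\cdot Y}\Big)+O(t^2),
\]
which is not the stated identity.

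What makes the lemma correct is a convention rather than a vanishing: the symbol $I_{\overline{L}(w_0,f,t)}(Y,s)$ here is to be read with the \emph{unchanged} semipositive measure $\c_1(\overline{L}_w)^{\wedge d}\wedge\delta_{Y_w^{\an}}$ and only the integrand $-\log\|s\|$ replaced by its perturbed version; note that \eqref{definition of local integrals} was only given for semipositive $\overline{L}$, so some choice is required for the DSP perturbation anyway. This reading is forced by the application in \eqref{supersmall section in the proof of equidistribution}, where positivity and total mass $\deg_L(Y)$ of the measure are needed to pass from the sup-norm bound \eqref{defining property of the supersmall section in the proof of equidistribution} to a lower bound on $I_{\overline{L}(w_0,f,t)}(Y,s_t)$. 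With this interpretation your local computation and summation are correct; you should simply replace the mistaken ``$\c_1$ vanishes'' sentence by this observation.
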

\begin{proof}
The fact that both $Z$ and $\ol{L}$ are defined over $K$ ensures that the integrals of $f$ over $X_{w_0}^{\an}$ and $X_{v_0}^{\an}$ coincides.
With this observation, the claim follows by the definition of the metric of $\ol{L}(w_0,f,t)$ and by comparing correcting integrals over a finite extension of $K^\prime$ over which $s$ is defined.
\end{proof}

\subsection{Successive minima}
The goal of this subsection is to give a tool to control the values of the correcting integrals in~\eqref{definition of local integrals} for generic subvarieties of $X$ of a fixed dimension.
To do so, consider first, for any $d=0,\ldots,\dim(X)$ and $\eta\in\RR$, the closed subset of $X_{\ol{K}}$
\[
X^{(d)}(\eta,\ol{L}):=\ol{\bigcup Y}^{\mathrm{Zar}},
\]
where the union ranges over all $d$-dimensional subvarieties $Y$ of $X_{\ol{K}}$ that satisfy the inequality
\begin{equation}\label{eq:essentialminimum-ineq}
\sup\Big\{I_{\ol{L}}(Y,s)\, \mid\, s\in\mathrm{H}^0\big(X_{\ol{K}},L_{\ol{K}}^{\otimes n}\big), n\in\mathbb{N}\setminus\{0\},Y\nsubseteq|\div(s)|\Big\}\leq\eta\deg_L(Y),
\end{equation}
with the convention that the supremum of the empty set is~$-\infty$.

\begin{remark}\label{equivalence of condition for inequality defining minima}
As a consequence to \hyperref[correcting integrals as height jumps]{Remark \ref*{correcting integrals as height jumps}} we can rewrite condition~\eqref{eq:essentialminimum-ineq} as
\[
\h_{\ol{L}}(Y)-\inf_{\substack{s\in\mathrm{H}^0(X_{\ol{K}},L_{\ol{K}}^{\otimes n})\\n\in\NN\setminus\{0\}\\ Y\nsubseteq\abs{\div(s)}}}
\frac{\h_{\ol{L}}(\div(s)\cdot Y)}{n}
\leq\eta\deg_L(Y),
\]
expressing it in terms of ``height-gaps'' with respect to its Cartier divisors.
\end{remark}

\begin{definition}\label{definition d-dimensional successive minima}
For $d=0,\ldots,\dim(X)$ and $j=1,\ldots,\dim(X)+1-d$, we define the \emph{$j$-th $d$-dimensional successive minimum} of $X$ with respect to $\ol{L}$ as
\[
\e_j^{(d)}(X,\ol{L}):=\inf\big\{\eta\in\RR\,\mid\; \dim\big(X^{(d)}(\eta,\ol{L})\big)\geq \dim(X)+1-j\big\}\in\mathbb{R}\cup\{\pm\infty\}.
\]

The first $d$-dimensional successive minimum is referred to as the \emph{$d$-dimensional essential minimum} of $X$ with respect to~$\ol{L}$.
In particular, when $X$ is geometrically irreducible one has
\[
\e_1^{(d)}(X,\ol{L})=\inf\lbrace\eta\in\RR\,\mid\; X^{(d)}(\eta,\ol{L})=X_{\ol{K}}\rbrace.
\]
\end{definition}

Roughly speaking, the $d$-dimensional essential minimum encodes the generic highest jump that can be realized in the first step of the inductive definition of the height of $d$-dimensional subvarieties. 

\begin{remark}\label{rmk:ess-min-0}
Since by definition $\h_{\ol{L}}(\emptyset)=0$ and $\deg_L(p)=1$ for every point $p$ of~$X_{\overline{K}}$, the set $X^{(0)}(\eta,\ol{L})$ is the Zariski closure in $X_{\overline{K}}$ of the set of points whose height is upper bounded by~$\eta$.
In particular, for each~$j=1,\ldots,\dim(X)+1$, the invariant $\e_j^{(0)}(X,\overline{L})$ coincides with the classical notion of $j$-th successive minimum of $X$ with respect to~$\ol{L}$, see for instance~\cite[\S5]{Zhang:1995}.
\end{remark}

\begin{example}
Let $\ol{L}=(\mathscr{O}_X,(\norm{\cdot}_{v,\mathrm{tr}}))$.
The degree of any subvariety $Y$ of dimension at least $1$ with respect to $\mathscr{O}_X$ is zero, as well as the quantity $I_{\ol{L}}(Y,s)$ for all nonzero global section $s$ of $\mathscr{O}_X$, because of the product formula.
Hence
\[
\e_j^{(d)}(X,\ol{L})=-\infty,
\]
for all $d=1,\dots,\dim(X) $ and $j=1,\dots,\dim(X)+1-d$.
\end{example}

The situation is better behaved under some geometrical assumptions on the line bundle $L$. Recall that $L$ is said to have infinite Iitaka dimension if $\mathrm{H}^0(X,L^{\otimes n})=\{0\}$ for all $n>0$; see \cite[\S2.1]{Lazarsfeld:positivityI} for a more general definition and treatment in the case of normal varieties.

\begin{lemma}\label{first properties of successive minima}
Let  $d=0,\dots,\dim(X)$. Then
\begin{enumerate}
\item with the usual order relation on $\mathbb{R}\cup\{\pm\infty\}$,
\[\e_{\dim(X)+1-d}^{(d)}(X,\ol{L})\leq\ldots\leq\e_2^{(d)}(X,\ol{L})\leq\e_1^{(d)}(X,\ol{L});\]
\item if $L$ has infinite Itaka dimension, then all the $d$-dimensional successive minima equal $-\infty$;
\item if $L$ is nef, for all $\eta_1,\eta_2\in\mathbb{R}$,\[\eta_1\leq\eta_2\ \Longrightarrow\ X^{(d)}(\eta_1,\ol{L})\subseteq X^{(d)}(\eta_2,\ol{L});\]
\item if $L$ is big and nef, the $d$-dimensional essential minimum differs from $-\infty$.
\end{enumerate}
\end{lemma}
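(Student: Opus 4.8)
The statement splits into three formal parts and part~(4), which carries the content. \textbf{Part~(1)} is immediate from Definition~\ref{definition d-dimensional successive minima}: the set $X^{(d)}(\eta,\ol{L})$ does not depend on $j$, and the condition $\dim X^{(d)}(\eta,\ol{L})\ge\dim(X)+1-j$ gets weaker as $j$ increases, so the set of $\eta$ over which the infimum in Definition~\ref{definition d-dimensional successive minima} is taken grows with $j$ and its infimum can only decrease. \textbf{Part~(3)}: if $L$ is nef, Kleiman's theorem gives $\deg_L(Y)\ge 0$ for every subvariety $Y$, hence $\eta_1\le\eta_2$ forces $\eta_1\deg_L(Y)\le\eta_2\deg_L(Y)$; so every $Y$ satisfying \eqref{eq:essentialminimum-ineq} with $\eta_1$ also satisfies it with $\eta_2$, and passing to the union and its Zariski closure gives the inclusion. \textbf{Part~(2)}: if $\mathrm{H}^0(X,L^{\otimes n})=0$ for every $n>0$, then by flat base change $\mathrm{H}^0(X_{\ol{K}},L_{\ol{K}}^{\otimes n})=\mathrm{H}^0(X,L^{\otimes n})\otimes_K\ol{K}=0$, so the supremum in \eqref{eq:essentialminimum-ineq} is over the empty set, equals $-\infty$, and is $\le\eta\deg_L(Y)$ for \emph{every} $\eta\in\RR$ and every $d$-dimensional $Y$; hence $X^{(d)}(\eta,\ol{L})$ is the Zariski closure of the union of all $d$-dimensional subvarieties of $X_{\ol{K}}$, which is $X_{\ol{K}}$ (this union contains every closed point). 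Thus $\dim X^{(d)}(\eta,\ol{L})=\dim(X)$ for all $\eta$, and every successive minimum equals $\inf\RR=-\infty$.

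For \textbf{part~(4)} the plan is the following. By part~(3) (valid since $L$ is nef), $\eta\mapsto X^{(d)}(\eta,\ol{L})$ is non-decreasing, so it is enough to produce one $\eta_0\in\RR$ with $X^{(d)}(\eta_0,\ol{L})\ne X_{\ol{K}}$; this already gives $\e_1^{(d)}(X,\ol{L})\ge\eta_0>-\infty$. Since $L$ is big, I would pick $n>0$ and a nonzero $s\in\mathrm{H}^0(X_{\ol{K}},L_{\ol{K}}^{\otimes n})$ coming from a section defined over $K$; bigness also forces $\div(s)\ne0$, so $|\div(s)|$ is a proper Zariski closed subset of $X_{\ol{K}}$. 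Let $H_0$ be the proper closed subset furnished by Proposition~\ref{prop:generic-big}, so that $\deg_L(Y)>0$ whenever $Y\nsubseteq H_0$. The goal is to show that, for $\eta_0$ sufficiently negative, no $d$-dimensional subvariety $Y$ with $Y\nsubseteq|\div(s)|\cup H_0$ satisfies \eqref{eq:essentialminimum-ineq} with $\eta=\eta_0$; this confines $X^{(d)}(\eta_0,\ol{L})$ inside $|\div(s)|\cup H_0\subsetneq X_{\ol{K}}$.

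To get there I would bound $I_{\ol{L}}(Y,s)$ from below, uniformly in $Y$. For each place $v$, $M_v:=\sup_{X_v^{\an}}\norm{s}_v^{\otimes n}$ is finite and positive by continuity of the metric and compactness of $X_v^{\an}$, and, $\ol{L}$ being quasi-algebraic, $M_v=1$ for all but finitely many $v$; hence $C:=\tfrac{1}{n}\sum_{v}n_v\log M_v$ is a finite real number. For $Y\nsubseteq|\div(s)|$ the measure $\c_1(\ol{L}_v)^{\wedge d}\wedge\delta_{Y_v^{\an}}$ is positive of total mass $\deg_L(Y)$ and $-\log\norm{s}_v^{\otimes n}\ge-\log M_v$ pointwise, so \eqref{definition of local integrals} gives $I_{\ol{L}_v}(Y,s)\ge-\tfrac{1}{n}(\log M_v)\deg_L(Y)$; summing over $v$ against the weights $n_v$ yields $I_{\ol{L}}(Y,s)\ge-C\deg_L(Y)$ (equivalently, via Remark~\ref{correcting integrals as height jumps}, a lower bound for the height-gap $\h_{\ol{L}}(Y)-\h_{\ol{L}}(\div(s)\cdot Y)/n$). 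Consequently, for $Y\nsubseteq|\div(s)|\cup H_0$, where $\deg_L(Y)>0$, the supremum in \eqref{eq:essentialminimum-ineq} is at least $I_{\ol{L}}(Y,s)\ge-C\deg_L(Y)>\eta_0\deg_L(Y)$ as soon as $\eta_0<-C$; taking $\eta_0:=-C-1$ finishes the argument. I expect the real obstacle to be precisely this uniform lower bound: one has to run the estimate place by place and check that the resulting constant is summable over $\fM$, which is exactly where quasi-algebraicity is used. A secondary subtlety is that $\ol{L}$ is only assumed semipositive, not arithmetically big, so one cannot shortcut the estimate by invoking a small section (for which the local integrands would be nonnegative outright).
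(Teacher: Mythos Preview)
Your proof is correct and follows essentially the same route as the paper's: parts~(1)--(3) are handled identically (straight from the definition, with Kleiman's theorem supplying $\deg_L(Y)\ge 0$ in~(3) and the empty-supremum convention in~(2)), and for~(4) you pick a global section $s$, use compactness and quasi-algebraicity to bound $\sup\norm{s}_v^{\otimes n}$ summably, deduce $I_{\ol L}(Y,s)\ge -C\deg_L(Y)$, and conclude that $X^{(d)}(\eta_0,\ol L)\subseteq H_0\cup|\div(s)|$ for $\eta_0=-C-1$. The only cosmetic differences are that you include the weights $n_v$ in the definition of $C$ (which is in fact the right thing to do) and explicitly invoke part~(3) to pass from a single $\eta_0$ to the lower bound on $\e_1^{(d)}$.
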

\begin{proof}
The first and third statements follow directly from the definition.
To prove~(2), notice that if $L$ has infinite Iitaka dimension, condition~\eqref{eq:essentialminimum-ineq} is satisfied for all subvarieties $Y$ of $X_{\ol{K}}$ and for all $\eta\in\mathbb{R}$.

Finally, assume that $L$ is big and nef.
Let $H_0$ be the Zariski closed subset of $X_{\ol{K}}$ given by~\hyperref[prop:generic-big]{Proposition~\ref*{prop:generic-big}}.
By bigness, there exists a nonzero global section $s$ of $L_{\ol{K}}^{\otimes n}$ for some $n>0$.
For every $v\in\mathfrak{M}$, the compactness of $X_v^{\an}$ and the continuity of the metric imply that $\|s\|^{\otimes n}_v$ is upper bounded by a strictly positive real constant $C_v$ on $X_v^{\an}$. Moreover, one can take $C_v=1$ for almost all $v\in\mathfrak{M}$ because of the quasi-algebricity of the metric.
Write $C:=-\sum_{v\in\mathfrak{M}}n_v\log C_v\in\mathbb{R}$ and set
\[\eta_s:=-\frac{|C|}{n}-1.\]
Notice that if $Y$ is a $d$-dimensional subvariety of $X_{\ol{K}}$ satisfying condition~\eqref{eq:essentialminimum-ineq} for such~$\eta_s$, then $Y$ has to be contained in $H_0\cup|\div(s)|$.
Indeed, if not, one would have
\[\eta_s\deg_L(Y)\geq I_{\ol{L}}(Y,s)\geq \frac{C}{n}\deg_L(Y)>\eta_s\deg_L(Y),\]
which is a contradiction.
Then,
\[X^{(d)}(\eta_s,\ol{L})\subseteq H_0\cup|\div(s)|.\]
Together with point (3), this implies that $\dim(X^{(d)}(\eta,\ol{L}))\leq\dim(X)-1$ for all~$\eta\leq\eta_s$, which in turn yields~$\e_1^{(d)}(X,\ol{L})\geq\eta_s$, concluding the proof.
\end{proof}

\begin{remark}
It is easy to adapt the proof of the fourth bullet of \hyperref[first properties of successive minima]{Lemma \ref*{first properties of successive minima}} to prove that $-|C|/n$ is in fact a lower bound for the $d$-dimensional essential minimum. More generally, the idea of controlling the size of global sections of $L$ to deduce lower bounds on the $d$-dimensional essential minimum is a central strategy in this paper and is exploited in the next section to relate such an arithmetic invariant with the (normalized) height of the ambient variety.
\end{remark}

Under the assumption that $L$ is semiample, a stronger conclusion than the one of \hyperref[first properties of successive minima]{Lemma~\ref*{first properties of successive minima}(4)} can be obtained.
We phrase it allowing fexibility in the choice of the base field, as follows.

\begin{lemma}\label{lemma about lower absolute constant}
Let $K^\prime$ be an algebraic extension of $K$. If $L$ is semiample, there exists an absolute real constant $C_{\ol{L},K^\prime}$ such that
\[
\sup_{\substack{s\in\mathrm{H}^0(X_{\ol{K}},L_{\ol{K}}^{\otimes n})\\n\in\NN\setminus\{0\}\\s\text{\emph{ intersects }}Y\text{\emph{ properly}}}}I_{\ol{L}}(Y,s)
\geq C_{\ol{L},K^\prime}\,\deg_L(Y)
\]
for every subvariety $Y$ of~$X_{K^\prime}$.
\end{lemma}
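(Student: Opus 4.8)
The plan is to exploit semiampleness to replace the supremum over all global sections by a single, well-chosen section of a fixed power of $L$, for which a lower bound on the correcting integral is easy to produce, and to track constants carefully so that the bound is uniform in $Y$.

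Since $L$ is semiample, I would fix $n_0\in\NN\setminus\{0\}$ with $L^{\otimes n_0}$ globally generated and let $s_0,\dots,s_N$ be a $K$-basis of the finite-dimensional space $\mathrm{H}^0(X,L^{\otimes n_0})$. Global generation means $\bigcap_j|\div(s_j)|=\emptyset$, a condition preserved under the base change to $K^\prime$; hence for every subvariety $Y$ of $X_{K^\prime}$ there is an index $j=j(Y)$ with $Y\nsubseteq|\div(s_j)|$, so that $s_j$ is an admissible choice in the supremum and $I_{\ol L}(Y,s_j)$ is defined. It therefore suffices to bound $I_{\ol L}(Y,s_{j(Y)})$ from below by $C_{\ol L,K^\prime}\deg_L(Y)$ with $C_{\ol L,K^\prime}$ independent of $Y$.

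To this end, choose a finite subextension $K^{\prime\prime}$ of $K^\prime/K$ over which $Y$ is defined (the $s_j$ already being defined over $K$); by \hyperref[correcting integrals as height jumps]{Remark~\ref*{correcting integrals as height jumps}} the value $I_{\ol L}(Y,s_j)$ is independent of this choice. For each $v\in\fM_K$, compactness of $X_v^{\an}$ and continuity of the metric give $\sup_{X_v^{\an}}\norm{s_j}_v^{\otimes n_0}\le C_v$ for some $C_v\in\RR_{>0}$, and, exactly as in the proof of \hyperref[first properties of successive minima]{Lemma~\ref*{first properties of successive minima}(4)}, quasi-algebraicity of $\ol L$ lets one take $C_v=1$ for all $v$ outside a finite set; replacing $C_v$ by $\max_j\sup_{X_v^{\an}}\norm{s_j}_v^{\otimes n_0}$ and enlarging the exceptional set to a finite $S_0\subseteq\fM_K$ makes $S_0$ and the $C_v$ depend only on $\ol L$ and $X$. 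For $w\in\fM_{K^{\prime\prime}}$ above $v$ one has $\mathbb{C}_w=\mathbb{C}_v$, so $X_w^{\an}$ is identified with $X_v^{\an}$ compatibly with the metric and $-\log\norm{s_j}_w^{\otimes n_0}\geq-\log C_v$ pointwise, this bound being $\geq 0$ when $v\notin S_0$. As $\c_1(\ol L_w)^{\wedge d}\wedge\delta_{Y_w^{\an}}$ is a positive measure of total mass $\deg_L(Y)$ by \hyperref[prop:measure-properties]{Proposition~\ref*{prop:measure-properties}(1)}, integrating the defining formula \eqref{definition of local integrals} for $I_{\ol L_w}(Y,s_j)$ gives $I_{\ol L_w}(Y,s_j)\geq-\frac{\log C_v}{n_0}\deg_L(Y)$ if $v\in S_0$ and $I_{\ol L_w}(Y,s_j)\geq0$ otherwise. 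Summing over all $w$ with weights $n_w$ and using $\sum_{w\mid v}n_w=n_v$ yields
\[
I_{\ol L}(Y,s_{j(Y)})\ \geq\ -\frac{1}{n_0}\Big(\sum_{v\in S_0}n_v\log C_v\Big)\deg_L(Y),
\]
and the constant on the right depends only on $\ol L$ and $X$, hence is of the required form.

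The step demanding the most care is the uniformity over $Y$: one must ensure that the finite exceptional set $S_0$ and the constants $C_v$ can be fixed once and for all, although the field of definition $K^{\prime\prime}$ of $Y$ varies inside $K^\prime$. This is where the compatibility of the adelic structures of $K^{\prime\prime}$ and $K$ recalled in Section~1, the identification $X_w^{\an}=X_v^{\an}$ for $w\mid v$, and the weight normalization $\sum_{w\mid v}n_w=n_v$ enter; the remaining ingredients — positivity of the Chambert-Loir measure and the sup-norm bound for global sections of a quasi-algebraic metric — are already available.
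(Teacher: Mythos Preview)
Your proof is correct and follows essentially the same approach as the paper's: use semiampleness to produce finitely many global sections $s_0,\dots,s_N$ whose common zero locus is empty, pick one avoiding $Y$, and bound the correcting integral from below via the pointwise inequality $-\log\|s_j\|_v^{\otimes n_0}\geq -\log\sup_{X_v^{\an}}\|s_j\|_v^{\otimes n_0}$ integrated against the positive Chambert--Loir measure. The only difference is that you work with sections defined over $K$ and pass to a finite subextension $K''\subseteq K'$ over which $Y$ is defined, using $\sum_{w\mid v}n_w=n_v$ to descend the estimate; this makes the argument cleaner when $K'$ is infinite over $K$ and in fact shows that the constant can be taken independent of $K'$, which is slightly sharper than stated.
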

\begin{proof}
The base change $L_{K^\prime}$ is semiample. Then, there exists a finite family of nonzero global sections $s_0,\dots,s_r$ of $L_{K^\prime}^{\otimes n}$, for some~$n>0$,
such that for every subvariety $Y$ of~$X_{K^\prime}$, there is an $i=0,\dots,r$ for which $Y\nsubseteq|\div(s_i)|$.
By continuity of the metric of~$\ol{L}$, the compactness of the analytifications of $X$ and the quasi-algebricity of~$\ol{L}$, the quantity
\[
C_{\ol{L},K^\prime}:=\min_{i = 0,\dots,r}\Bigg(-\sum_{w\in\mathfrak{M}_{K^\prime}}n_w\frac{1}{n}\log\sup \|s_i\|_w\Bigg)
\]
is a real number.
Then, \hyperref[prop:measure-properties]{Proposition \ref*{prop:measure-properties} (1)} gives
\[
\sup_{\substack{s\in H^0(X_{\ol{K}},L_{\ol{K}}^{\otimes n})\\n\in\mathbb{N}\setminus\{0\}\\s\text{ intersects }Y\text{ properly}}}I_{\ol{L}}(Y,s)\geq C_{\ol{L},K^\prime}\deg_L(Y),
\]
concluding the proof.
\end{proof}

For the remaining of the paper, we focus on the notion of $d$-dimensional essential minimum and its applications to equidistribution theory.
An explicit form of this invariant in the case when $d=0$ is described in Remark~\ref{rmk:ess-min-0};
the other extreme case, $d=\dim(X)$, is illustrated in the following example.

\begin{example}\label{essential minimum of full dimension}
Assume that $X$ is geometrically irreducible.
If $L$ is big and nef, $\deg_L(X)>0$ and thus
\[
\e_1^{(\dim(X))}(X,\ol{L})=\sup\bigg\{\frac{I_{\ol{L}}(X,s)}{\deg_L(X)}\ \Big\vert \ s\in\mathrm{H}^0(X_{\ol{K}},L_{\ol{K}}^{\otimes n})\setminus\{0\},\ n\in\mathbb{N}\setminus\{0\}\bigg\}.
\]
Using \hyperref[correcting integrals as height jumps]{Remark \ref*{correcting integrals as height jumps}},
such an arithmetic invariant is completely determined by the knowledge of the height of $X$ and of all $1$-codimensional subvarieties of~$X_{\ol{K}}$. 
\end{example}

This example generalizes to the following alternative definition of the $d$-dimensional essential minimum.

\begin{proposition}\label{prop:equivalent-def-ess-minimum}
Let $L$ be a big and nef line bundle on $X$ and let $H_0$ denote the Zariski closed subset of $X_{\ol{K}}$ given by \hyperref[prop:generic-big]{Proposition \ref*{prop:generic-big}}.
For every $d=0,\ldots,\dim(X)$, we have
\begin{equation}\label{eq:essentialminimum}
\e_1^{(d)}(X,\ol{L})=
\sup_{\substack{H\text{\emph{ closed subset of }}X_{\ol{K}}\\\codim(H)=1}}\;
\inf_{\substack{Y\subseteq X_{\ol{K}}\\\dim(Y)=d\\ Y\nsubseteq H\cup H_0}}\;
\sup_{\substack{s\in\mathrm{H}^0(X_{\ol{K}},L_{\ol{K}}^{\otimes n})\\n\in\NN\setminus\{0\}\\Y\nsubseteq \abs{\div(s)}}}
\frac{I_{\ol{L}}(Y,s)}{\deg_{L}(Y)}.
\end{equation}
\end{proposition}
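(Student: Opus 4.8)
The plan is to compare the definition of $\e_1^{(d)}(X,\ol{L})$ given in \hyperref[definition d-dimensional successive minima]{Definition~\ref*{definition d-dimensional successive minima}} with the right-hand side of~\eqref{eq:essentialminimum}. Denote by $E$ the supremum-infimum-supremum expression on the right of~\eqref{eq:essentialminimum}, and note first that since $L$ is big and nef, \hyperref[prop:generic-big]{Proposition~\ref*{prop:generic-big}} guarantees $\deg_L(Y)>0$ for every $Y\nsubseteq H_0$, so all the quotients $I_{\ol{L}}(Y,s)/\deg_L(Y)$ appearing in $E$ are well defined; moreover, by \hyperref[first properties of successive minima]{Lemma~\ref*{first properties of successive minima}(4)} the essential minimum is finite, so we are comparing two real numbers. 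For $d$-dimensional $Y\nsubseteq H_0$ set
\[
\mu(Y):=\sup\Big\{\tfrac{I_{\ol{L}}(Y,s)}{\deg_L(Y)}\ \big\vert\ s\in\mathrm{H}^0(X_{\ol{K}},L_{\ol{K}}^{\otimes n}),\ n\in\NN\setminus\{0\},\ Y\nsubseteq|\div(s)|\Big\},
\]
so that the defining inequality~\eqref{eq:essentialminimum-ineq} for membership in $X^{(d)}(\eta,\ol{L})$ reads precisely $\mu(Y)\leq\eta$ (after dividing by the positive number $\deg_L(Y)$, legitimate away from $H_0$), and $E=\sup_H\inf_{Y\nsubseteq H\cup H_0}\mu(Y)$.

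For the inequality $\e_1^{(d)}(X,\ol{L})\leq E$: fix $\eta<\e_1^{(d)}(X,\ol{L})$; by \hyperref[first properties of successive minima]{Lemma~\ref*{first properties of successive minima}(3)} (using nefness of $L$), and by the definition of the infimum, $X^{(d)}(\eta,\ol{L})$ is a proper Zariski closed subset of $X_{\ol{K}}$, hence contained in some codimension-one closed subset $H$. Then every $d$-dimensional $Y\nsubseteq H\cup H_0$ satisfies $Y\not\subseteq X^{(d)}(\eta,\ol{L})$, hence $Y$ is not one of the subvarieties defining that set, i.e.\ $\mu(Y)>\eta$; taking the infimum over such $Y$ and then the supremum over $H$ gives $E\geq\eta$. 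Letting $\eta\uparrow\e_1^{(d)}(X,\ol{L})$ yields the claim. (One must here dispose of the trivial case where no such $Y$ exists: since $d\leq\dim X$ and $H\cup H_0$ is a proper closed subset, $d$-dimensional subvarieties avoiding it do exist, e.g.\ by intersecting with generic hyperplane sections.)

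For the reverse inequality $E\leq\e_1^{(d)}(X,\ol{L})$: let $\eta>\e_1^{(d)}(X,\ol{L})$, so $X^{(d)}(\eta,\ol{L})=X_{\ol{K}}$. The point is that $X^{(d)}(\eta,\ol{L})$ is by construction the Zariski closure of a union of $d$-dimensional $Y$ with $\mu(Y)\leq\eta$; since this closure is all of $X_{\ol{K}}$, for every codimension-one closed $H$ there must be such a $Y$ with $Y\nsubseteq H$ — for otherwise all the $Y$'s in the union would lie in the proper closed set $H$ and so would their closure. We may further arrange $Y\nsubseteq H_0$: replace $H$ by $H\cup H_0$, which is still of codimension one (as $H_0$ is proper closed, but we should be slightly careful and instead argue that the union of $Y$'s satisfying~\eqref{eq:essentialminimum-ineq} and contained in $H_0$ has closure inside $H_0\neq X_{\ol{K}}$, so removing them does not change that the closure is everything). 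Thus for every $H$ there is a $d$-dimensional $Y\nsubseteq H\cup H_0$ with $\mu(Y)\leq\eta$, whence $\inf_{Y\nsubseteq H\cup H_0}\mu(Y)\leq\eta$; taking the supremum over $H$ gives $E\leq\eta$, and letting $\eta\downarrow\e_1^{(d)}(X,\ol{L})$ concludes.

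The main obstacle I anticipate is the bookkeeping around the exceptional set $H_0$ and the convention for empty suprema: one must make sure that excising $H_0$ from the range of $Y$ on the right-hand side of~\eqref{eq:essentialminimum} genuinely matches the Zariski-closure definition of $X^{(d)}(\eta,\ol{L})$, i.e.\ that $d$-dimensional subvarieties contained in $H_0$ contribute nothing essential to whether $X^{(d)}(\eta,\ol{L})=X_{\ol{K}}$, and that the quotient $I_{\ol{L}}(Y,s)/\deg_L(Y)$ used to rephrase~\eqref{eq:essentialminimum-ineq} is only taken where $\deg_L(Y)>0$. Everything else is a direct unwinding of \hyperref[definition d-dimensional successive minima]{Definition~\ref*{definition d-dimensional successive minima}}, Remark~\ref{equivalence of condition for inequality defining minima}, and the monotonicity furnished by \hyperref[first properties of successive minima]{Lemma~\ref*{first properties of successive minima}}.
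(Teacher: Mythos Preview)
Your proposal is correct and follows essentially the same approach as the paper's proof: both directions hinge on translating between the condition $X^{(d)}(\eta,\ol{L})=X_{\ol{K}}$ and the sup--inf--sup expression via the equivalence (for $Y\nsubseteq H_0$, so $\deg_L(Y)>0$) between~\eqref{eq:essentialminimum-ineq} and $\mu(Y)\leq\eta$, together with the monotonicity of Lemma~\ref{first properties of successive minima}(3). The only cosmetic difference is that the paper argues with $\widetilde\eta\pm\varepsilon$ whereas you pick $\eta$ strictly on either side of $\e_1^{(d)}(X,\ol{L})$; these are interchangeable. One small inaccuracy: Lemma~\ref{first properties of successive minima}(4) only gives $\e_1^{(d)}(X,\ol{L})\neq-\infty$, not finiteness, but your argument does not actually need finiteness and goes through unchanged if $\e_1^{(d)}(X,\ol{L})=+\infty$.
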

\begin{proof}
Fix $d=0,\ldots,\dim(X)$ and, to simplify the notation, let $\widetilde{\eta}$ denote the quantity on the right hand side of~\eqref{eq:essentialminimum}.
Fix an arbitrary choice of $\varepsilon>0$. For all closed subset $H$ of $X_{\ol{K}}$ of codimension~$1$,
the definitions of supremum and infimum yield that there exists a subvariety $Y$ of~$X_{\ol{K}}$, of dimension $d$ and not contained in $H\cup H_0$,
such that
\[
\sup_{\substack{s\in\mathrm{H}^0(X_{\ol{K}},L_{\ol{K}}^{\otimes n})\\n\in\NN\setminus\{0\}\\Y\nsubseteq \abs{\div(s)}}}\frac{I_{\ol{L}}(Y,s)}{\deg_{L}(Y)}\leq\widetilde{\eta}+\varepsilon.
\]
As $\deg_L(Y)>0$, such a subvariety $Y$ is contained in the set $X^{(d)}(\widetilde{\eta}+\varepsilon, \ol{L})$, but not in $H$, hence $X^{(d)}(\widetilde{\eta}+\varepsilon, \ol{L})\nsubseteq H$. As this is true for all closed subsets $H$ of $X_{\ol{K}}$ of codimension~$1$, we have that
\[
X^{(d)}(\widetilde{\eta}+\varepsilon,\ol{L})=X,
\]
which in turn implies, by the arbitrariness of~$\varepsilon$, that $\e_1^{(d)}(X,\ol{L})\leq\widetilde{\eta}$.

For the reverse inequality, consider again $\varepsilon>0$. By definition of the supremum, there exists a closed subset $H_\varepsilon$ of $X_{\ol{K}}$ of codimension~$1$, such that
\[
\inf_{\substack{Y\subseteq X_{\ol{K}}\\\dim(Y)=d\\ Y\nsubseteq H_\varepsilon\cup H_0}}\; \sup_{\substack{s\in\mathrm{H}^0(X_{\ol{K}},L_{\ol{K}}^{\otimes n})\\n\in\NN\setminus\{0\}\\Y\nsubseteq \abs{\div(s)}}}
\frac{I_{\ol{L}}(Y,s)}{\deg_{L}(Y)}>\widetilde{\eta}-\varepsilon;
\]
this means that for every subvariety $Y$ of dimension $d$ such that $Y\nsubseteq H_\varepsilon\cup H_0$,
\[
\sup_{\substack{s\in\mathrm{H}^0(X_{\ol{K}},L_{\ol{K}}^{\otimes n})\\n\in\NN\setminus\{0\}\\Y\nsubseteq \abs{\div(s)}}}\frac{I_{\ol{L}}(Y,s)}{\deg_{L}(Y)}>\widetilde{\eta}-\varepsilon.
\]
This implies that such subvarieties $Y$ can not satisfy the condition \eqref{eq:essentialminimum-ineq} for $\eta=\widetilde{\eta}-\varepsilon$; otherwise said, if a subvariety $Y$ satisfies the inequality in \eqref{eq:essentialminimum-ineq} with $\eta=\widetilde{\eta}-\varepsilon$, then it must be contained in $H_\varepsilon\cup H_0$ hence, by taking Zariski closures,
\[
X^{(d)}(\widetilde{\eta}-\varepsilon,\ol{L})\subseteq H_\varepsilon\cup H_0.
\]
As $H_\varepsilon\cup H_0$ has codimension~$1$, using \hyperref[first properties of successive minima]{Proposition~\ref*{first properties of successive minima}(3)} and the definition of the $d$-dimensional essential minimum forces $\widetilde{\eta}-\varepsilon\leq\e_1^{(d)}(X,\ol{L})$.
The arbitrariness of $\varepsilon>0$ yields $\widetilde{\eta}\leq\e_1^{(d)}(X,\ol{L})$, concluding the proof.
\end{proof}

%
%
%
%
%
%
%
%
%
%
%
%
%
%
%
%
%
%
%
%
%
%
%
%
%
%
%
%
%
%
%
%
%
%
%
%
%
%

\section{Key inequality and Zhang's inequality}\label{section key inequality}

Throughout this section, $X$ denotes a projective variety of dimension $N$ defined over a field~$K$.
Moreover, $L$ denotes a big semiample 
line bundle on~$X$.

The aim of this section is to prove the following key inequality, which is the essential ingredient for the main equidistribution result in this paper.
In addition, we also use it to prove an analogue of Zhang's inequality in our setting in Subsection 3.D.

\begin{theorem}[Key inequality]\label{thm:key-lemma}
Let $(\norm{\cdot}_v)_v$ be a semipositive quasi-algebraic metric on~$L$, and $\ol{\mathscr{O}_X}=(\mathscr{O}_X,(\norm{\cdot}_{v}')_v)$ be DSP quasi-algebraically metrized.
For $t\in\RR$, with $t$ close to $0$, and for $n>0$ big enough, there is a nonzero global section $s\in \mathrm{H}^0(X_{\ol{K}},L_{\ol{K}}^{\otimes n})$ satisfying
\begin{equation}\label{eq:very-small}
\sum_{v\in\fM}n_v\sup\log\norm{s}^{\otimes n}_{v}\norm{1}_{v}^{\prime\otimes t}\leq n\Big(- \widehat{\h}_{\ol{L}\otimes\ol{\mathscr{O}_X}^{\otimes t}}(X)+ O(t^2) \Big),
\end{equation}
where the implicit constant on $O(t^2)$ does not depend on~$n$.
\end{theorem}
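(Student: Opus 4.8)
The plan is to adapt the small--section technique of Yuan~\cite{Yuan:2008} and Gubler~\cite{Gubler:2008} to the perturbed metrized line bundle $\ol M:=\ol L\otimes\ol{\mathscr{O}_X}^{\otimes t}$, whose underlying line bundle is the big semiample $L$. Before doing so I would reduce to a convenient situation. By the Lipschitz continuity of heights (\hyperref[lemma:Lipsshitz]{Lemma~\ref*{lemma:Lipsshitz}}) and the density of $\QQ$ in $\RR$ it is enough to treat $t\in\QQ$: if $t'\in\QQ$ satisfies $\abs{t-t'}\le t^2$, then passing from $t'$ to $t$ changes the left--hand side of~\eqref{eq:very-small} by at most $\abs{t-t'}\sum_v n_v\sup\abs{\log\norm 1'_v}$ and the right--hand side by $O(\abs{t-t'})$, both absorbed into $n\,O(t^2)$ once $n$ is large. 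By \hyperref[prop:panico]{Proposition~\ref*{prop:panico}} we may extend scalars so that $\ol L$ and $\ol{\mathscr{O}_X}$ are algebraic, which affects neither sup--norms nor heights with our normalization of weights. Finally, after a uniform rescaling of the metric of $\ol L$ --- which shifts both sides of~\eqref{eq:very-small} by the same additive constant, hence preserves the statement --- we may assume $\ol L$ is arithmetically nef (it is nef already, being semiample).

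Next I would reduce the theorem to a single inequality on arithmetic volumes. By the construction of small sections of~\cite{Yuan:2008} and~\cite{Gubler:2008} --- which over number fields rests on the fact that $\widehat{\vol}$ is a genuine limit~\cite[Theorem~5.2]{Chen:Positive-degree}, and over function fields is carried out with the volume of a model (\hyperref[rmk:height=degree]{Remark~\ref*{rmk:height=degree}}, \hyperref[arithmetic volume over function fields]{Remark~\ref*{arithmetic volume over function fields}}) --- one obtains, for every $\varepsilon>0$ and every $n$ large enough, a nonzero $s\in\mathrm H^0(X_{\ol K},L_{\ol K}^{\otimes n})$ with
\[
\sum_{v\in\fM}n_v\sup\log\norm s^{\otimes n}_v\norm 1_v^{\prime\otimes t}\ \le\ -\,n\Big(\frac{\widehat{\vol}(\ol M)}{(N+1)\vol(L)}-\varepsilon\Big);
\]
the mechanism is to rescale the metric of $\ol M$ so that its arithmetic volume decreases by $(N+1)\vol(L)\big(\tfrac{\widehat{\vol}(\ol M)}{(N+1)\vol(L)}-\varepsilon\big)$, observe that it stays positive, and take a small section of a tensor power. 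Since $\vol(L)=\deg_L(X)$ and $\widehat{\vol}(\ol M)/\big((N+1)\deg_L(X)\big)$ is to be compared with $\widehat{\h}_{\ol M}(X)=\widehat{\h}_{\ol L\otimes\ol{\mathscr{O}_X}^{\otimes t}}(X)$, it now suffices to prove
\[
\widehat{\vol}(\ol M)\ \ge\ \h_{\ol M}(X)-O(t^2),
\]
with a constant independent of $n$; note that $\ol M$ carries no $n$ at all.

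To prove this inequality I would invoke the arithmetic Siu inequality. Write $\ol{\mathscr{O}_X}\simeq\ol P\otimes\ol Q^{-1}$ with $\ol P,\ol Q$ semipositive algebraically metrized line bundles on $\mathscr{O}_X$, and assume $t>0$ after swapping them if necessary. Tensoring $\ol P$ and $\ol Q$ with a sufficiently positive fixed metrized ample line bundle (e.g. a suitably rescaled Fubini--Study bundle) makes them arithmetically nef, and then
\[
\ol M\ \simeq\ \big(\ol L\otimes\ol P^{\otimes t}\big)\otimes\big(\ol Q^{\otimes t}\big)^{-1}=:\ol L_1\otimes\ol L_2^{-1},
\]
where $\ol L_1,\ol L_2$ are nef, semipositive and arithmetically nef --- for $\ol L_1$ one uses the reduction that $\ol L$ is arithmetically nef. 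Clearing the denominator of $t$ and applying \hyperref[lemma:Siu-aritmetico]{Lemma~\ref*{lemma:Siu-aritmetico}},
\[
\widehat{\vol}(\ol M)=\widehat{\vol}(\ol L_1\otimes\ol L_2^{-1})\ \ge\ \h_{\ol L_1}(X)-(N+1)\,\h_{\ol L_1,\dots,\ol L_1,\ol L_2}(X).
\]
Expanding the right--hand side in powers of $t$ by multilinearity of the height, the terms of degree $\ge2$ are $O(t^2)$, while the degree $0$ and $1$ terms sum to $\h_{\ol L}(X)+(N+1)\,t\,\h_{\ol L,\dots,\ol L,\ol{\mathscr{O}_X}}(X)=\h_{\ol L\otimes\ol{\mathscr{O}_X}^{\otimes t}}(X)+O(t^2)=\h_{\ol M}(X)+O(t^2)$, the auxiliary ample bundle cancelling since $\ol P\otimes\ol Q^{-1}\simeq\ol{\mathscr{O}_X}$. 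This gives the claim. Over function fields the argument is identical, with $\widehat{\vol}$ and global heights replaced by the volume and intersection numbers on a model and Lemma~\ref*{lemma:Siu-aritmetico} replaced by the geometric Siu theorem, following~\cite{Gubler:2008}. (When $t=0$ one may instead apply Moriwaki's arithmetic Hodge index theorem \hyperref[lemma:Moriwaki]{Lemma~\ref*{lemma:Moriwaki}} directly to the semipositive $\ol L$.)

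I expect the main difficulty to be the uniformity in $n$: one must check that the $O(t^2)$ in the volume inequality carries a constant independent of $n$ --- which works precisely because $\ol M$ does not depend on $n$ and the $n$--dependence only enters when passing to $\ol M^{\otimes n}$ --- and that positivity of the arithmetic volume produces a small section of the relevant $n$--th power for \emph{every} large $n$, which is exactly where the fact that $\widehat{\vol}$ is a limit, rather than a mere $\limsup$, becomes indispensable.
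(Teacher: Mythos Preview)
Your approach is essentially the paper's own: decompose $\ol{\mathscr{O}_X}$ as a difference of nef semipositive bundles, apply the arithmetic (resp.\ geometric) Siu inequality to $\ol L_1=\ol L\otimes\ol P^{\otimes t}$ and $\ol L_2=\ol Q^{\otimes t}$, expand by multilinearity to obtain $\widehat{\vol}(\ol L\otimes\ol{\mathscr{O}_X}^{\otimes t})\ge \h_{\ol L\otimes\ol{\mathscr{O}_X}^{\otimes t}}(X)-O(t^2)$, and then extract the desired section by rescaling the metric --- exactly the $\ol L(c)$ trick the paper uses in its Lemma~\ref{key-nf}. Your preliminary rescaling of $\ol L$ to make it arithmetically nef (which works because $L$ is semiample, so heights of points are bounded below) is a clean way to ensure $\ol L_1$ is nef, a point the paper leaves somewhat implicit.

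One small omission: \hyperref[prop:panico]{Proposition~\ref*{prop:panico}} only turns a quasi-algebraic metric into an algebraic one when every non-archimedean local metric is already a model metric; a general semipositive quasi-algebraic metric is only a \emph{limit} of such at the finitely many bad places. The paper handles this with a separate approximation step at the end of its proof, using the same Lipschitz continuity you invoke for $t\in\RR\setminus\QQ$, so you should fold that reduction in as well. The function field case, which you dismiss as ``identical'', is carried out in the paper rather differently via an explicit twist by $\pi^*\mathscr{M}^{\otimes r}$ on a projective model (Corollary~\ref{key-ff}), though your abstract volume formulation is a legitimate way to unify the two cases.
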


To prove this theorem we deal with two different cases depending on the nature of~$K$.
In the function field case we mainly follow~\cite[\S5]{Gubler:2008}, whereas in the number field case we take Yuan's approach that was introduced in~\cite[\S3]{Yuan:2008}.
For the convenience of the reader, we shall nevertheless give the full argument of these proofs, to clear out the subtle differences.

\subsection{Function fields}
We can assume that $k$ is an algebraically closed field.
Let $K=k(C)$ be the function field of a regular projective curve $C$ over~$k$.

Let $\pi:\mathscr{X}\rightarrow C$ be a projective model of $X$,
and let $\mathscr{L}$ be a vertically nef model of $L^{\otimes e}$ on~$\mathscr{X}$, for some nonzero $e\in\NN$.
Let $\scrM$ be an ample line bundle on $C$ such that the metrized line bundle on $\ol{L}$ induced by the model $\scrL\otimes\pi^* \scrM$ is nef;
such a line bundle exists by \cite[Lemma~5.3]{Gubler:2008}.

\begin{proposition}\label{prop:functionfield}
Let $\scrN$ be a line bundle on $\scrX$ that is trivial on the generic fiber,
and assume $e=1$.
For every $t\in\QQ$, with $t$ close to~$0$, and every $r\in\QQ$ such that
\[
r> \frac{-\h_{\scrL\otimes \scrN^{\otimes t}}(X)}{(N+1)\deg_{\scrM}(C)\deg_L(X)}+O(t^2),
\]
with the implicit constant in $O(t^2)$ not depending on~$n$, we have
\[
h^0\big(\scrX,(\scrL\otimes \pi^*\scrM^{\otimes r}\otimes \scrN^{\otimes t})^{\otimes n}\big)>0,
\]
for every $n>0$ big enough, with $tn,rn\in \ZZ$.
\end{proposition}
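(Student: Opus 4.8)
The strategy is to estimate the dimension of the space of sections of the model line bundle $(\scrL\otimes\pi^*\scrM^{\otimes r}\otimes\scrN^{\otimes t})^{\otimes n}$ on $\scrX$ by an asymptotic Riemann--Roch type computation, and to show that the leading coefficient is positive under the stated lower bound on $r$. Concretely, since $\scrX$ has dimension $N+1$ (being a projective model of the $N$-dimensional variety $X$ over the curve $C$), one expects
\[
h^0\big(\scrX,(\scrL\otimes\pi^*\scrM^{\otimes r}\otimes\scrN^{\otimes t})^{\otimes n}\big)
=\frac{n^{N+1}}{(N+1)!}\,\big(\scrL\otimes\pi^*\scrM^{\otimes r}\otimes\scrN^{\otimes t}\big)^{N+1}+o(n^{N+1}),
\]
provided the class $\scrL\otimes\pi^*\scrM^{\otimes r}\otimes\scrN^{\otimes t}$ is nef (or at least big); here the intersection number is taken on $\scrX$. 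So the whole point is to expand the top self-intersection number and check it is strictly positive.

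First I would expand, using multilinearity of intersection numbers and the fact that $\pi^*\scrM^{\otimes 2}=0$ on $\scrX$ (as $\scrM$ lives on the curve $C$, so $\pi^*\scrM\cdot\pi^*\scrM=0$):
\[
\big(\scrL\otimes\pi^*\scrM^{\otimes r}\otimes\scrN^{\otimes t}\big)^{N+1}
=\big(\scrL\otimes\scrN^{\otimes t}\big)^{N+1}
+r(N+1)\,\pi^*\scrM\cdot\big(\scrL\otimes\scrN^{\otimes t}\big)^{N}.
\]
Next I would identify the two terms arithmetically via Remark~\ref{rmk:height=degree} (together with the multilinearity of heights): since $\scrN$ is trivial on the generic fiber and $\scrL$ is a model of $L$ (recall $e=1$), the term $\big(\scrL\otimes\scrN^{\otimes t}\big)^{N+1}$ equals $\h_{\scrL\otimes\scrN^{\otimes t}}(X)$ up to the normalization, i.e. it is $\h_{\ol{L}\otimes\ol{N}^{\otimes t}}(X)$ where $\ol{N}$ is the metrized trivial bundle attached to $\scrN$; and $\pi^*\scrM\cdot\big(\scrL\otimes\scrN^{\otimes t}\big)^{N}=\deg_{\scrM}(C)\cdot\big(\scrL\otimes\scrN^{\otimes t}\big)^{N}|_{\text{generic fiber}}=\deg_{\scrM}(C)\,\deg_{L}(X)$, since the generic fiber of $\pi$ is $X$, the class $\scrN$ restricts trivially there, and $\scrL$ restricts to $L$. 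Substituting, the top self-intersection number becomes
\[
\h_{\scrL\otimes\scrN^{\otimes t}}(X)+r(N+1)\deg_{\scrM}(C)\deg_{L}(X),
\]
which is strictly positive exactly when $r>\dfrac{-\h_{\scrL\otimes\scrN^{\otimes t}}(X)}{(N+1)\deg_{\scrM}(C)\deg_L(X)}$, matching the hypothesis (the $O(t^2)$ slack absorbs the error coming from the next two paragraphs).

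\textbf{Handling nefness and the bigness input.} The Riemann--Roch lower bound $h^0\geq$ (top self-intersection)$/(N+1)!+o(n^{N+1})$ is only immediate when the line bundle is nef; for a big but not nef line bundle one needs a theorem of Siu/Fujita type, or the asymptotic Riemann--Roch for big classes, to get the volume as the leading term. The cleanest route: for $t$ rational and small, write $\scrL\otimes\scrN^{\otimes t}$ as a $\QQ$-combination and observe that $\scrL\otimes\pi^*\scrM^{\otimes r_0}$ is nef for a fixed $r_0$ (by the choice of $\scrM$ at the start of the subsection), so for $r>r_0$ and $|t|$ small the perturbation $\scrL\otimes\pi^*\scrM^{\otimes r}\otimes\scrN^{\otimes t}$ stays in the nef cone — this uses that $\scrN^{\otimes t}$ is a small perturbation and $\pi^*\scrM$ provides a nef cushion; alternatively one reduces to the case $r$ slightly larger than the displayed bound, which may still lie below $r_0$, in which case the class need not be nef and one invokes the arithmetic Siu inequality (Lemma~\ref{lemma:Siu-aritmetico}) or rather its geometric analogue on $\scrX$ to lower-bound $\vol$ by an intersection number. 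The $O(t^2)$ error term in the statement is precisely there to absorb the discrepancy between $\vol\big(\scrL\otimes\pi^*\scrM^{\otimes r}\otimes\scrN^{\otimes t}\big)$ and the naive self-intersection number when the class is not nef but only big, as well as the quadratic-in-$t$ terms one threw away by setting $\pi^*\scrM^{\otimes 2}=0$ interacts with $\scrN^{\otimes t}$ only at order $t$.

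\textbf{Main obstacle.} I expect the delicate point to be making the passage from ``positive top self-intersection number'' to ``$h^0>0$'' fully rigorous uniformly in $t$: one must ensure that the $o(n^{N+1})$ error and the $O(t^2)$ term do not interact badly, i.e. that for each fixed small rational $t$ there is a threshold $n_0(t,r)$ beyond which $h^0>0$, and that the bound on $r$ is independent of $n$ (as the statement demands). This is a standard but careful application of asymptotic Riemann--Roch on the $(N+1)$-dimensional projective variety $\scrX$ over the (possibly non-closed, but here algebraically closed) field $k$, combined with the continuity of the volume function on the big cone; the genuinely new content is just the bookkeeping of the $t$-dependence, which the $O(t^2)$ notation is designed to streamline. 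No deep new idea is needed beyond what is already in \cite[\S5]{Gubler:2008}.
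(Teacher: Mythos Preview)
Your proposal is correct and follows essentially the same approach as the paper: invoke the geometric Siu-type inequality (this is precisely \cite[Lemma~5.6]{Gubler:2008}, which already packages the $O(t^2)$ error), then expand the top self-intersection number using $\pi^*\scrM\cdot\pi^*\scrM=0$ and the triviality of $\scrN$ on the generic fiber, and finally identify the result with $\h_{\scrL\otimes\scrN^{\otimes t}}(X)+r(N+1)\deg_{\scrM}(C)\deg_L(X)$ via Remark~\ref{rmk:height=degree}. Your ``Handling nefness'' paragraph is an unnecessary detour---the class need not be nef, and the paper simply applies the Siu inequality from the outset rather than attempting to verify nefness first; the $O(t^2)$ arises directly from the Siu bound (writing the bundle as a difference of two nef classes with one piece of order $t$), not from any volume-vs-self-intersection discrepancy.
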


\begin{proof}
First, as Gubler's consequence to Siu's theorem (see \cite[Lemma~5.6]{Gubler:2008}) we have the following inequality
\begin{multline}\label{eq:gubler}
h^0\big(\scrX,\big(\scrL\otimes(\pi^*\scrM)^{\otimes r}\otimes\scrN^{\otimes t}\big)^{\otimes n}\big)\\
\geq
\frac{1}{(N+1)!}\big(\deg_{\scrL\otimes(\pi^*\scrM)^{\otimes r}\otimes\scrN^{\otimes t}}(\scrX)+O(t^2)\big)n^{N+1} + o(n^{N+1}),
\end{multline}
for every $n\in\NN$ big enough, such that $tn,rn\in\ZZ$, and every $t\in\QQ$ close to~$0$.

To compute the degree appearing on the right-hand side of this inequality, one remarks the following.
Since $C$ is a curve, the intersection product of $\pi^*\scrM$ with itself is zero by the projection formula.
In addition, as $\scrN$ is trivial on the generic fiber of $\scrX$, we have that it is trivial on all but a finite number of fibers of $\pi$; therefore $\pi^*\scrM\cdot \scrN=0$ by Chow's moving lemma.
We use these observations, in conjunction with the  the multilinearity of the degree, to obtain
\[
\deg_{\scrL\otimes(\pi^*\scrM)^{\otimes r}\otimes \scrN^{\otimes t}}(\mathscr{X})=
\deg_{\scrL\otimes \scrN^{\otimes t}}(\mathscr{X}) + (N+1)\deg_{\scrL,\ldots,\scrL,(\pi^*\scrM)^{\otimes r}}(\mathscr{X}).
\]
Moreover, applying the projection formula to the second summand on the right-hand side, we have that
\[
\deg_{\scrL\otimes(\pi^*\scrM)^{\otimes r}\otimes \scrN^{\otimes t}}(\mathscr{X})=\deg_{\scrL\otimes \scrN^{\otimes t}}(\mathscr{X}) + (N+1)r\deg_{\scrM}(C)\deg_L(X).
\]

Finally, by Remark~\ref{rmk:height=degree} and inequality~\eqref{eq:gubler}, we get that for
\[
r>\frac{-\h_{\scrL\otimes\scrN^{\otimes t}}(X)}{(N+1)\deg_{\scrM}(C)\deg_L(X)}+O(t^2)
\]
and $n$ big enough, the statement holds.
\end{proof}

\begin{corollary}\label{key-ff}
Let $\scrN$ be a line bundle on $\scrX$ that is trivial on the generic fiber.
For every $t\in\mathbb{Q}$ close to $0$ and $n>0$ big enough, with $nt\in\mathbb{Z}$, there exists a nonzero global section $s\in\mathrm{H}^0(X,L^{\otimes n})$ such that
\[
\sum_{v\in \fM}n_v\log\sup\norm{s}^{\otimes n}_{\mathscr{L}\otimes\mathscr{N}^{\otimes t},v}\leq n\Big(-\widehat{\h}_{\mathscr{L}\otimes\mathscr{N}^{\otimes t}}(X)+O(t^2)\Big),
\]
with the implicit constant of $O(t^2)$ not depending on~$n$.
\end{corollary}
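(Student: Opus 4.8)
The plan is to feed Proposition~\ref{prop:functionfield} into the description of small sections over function fields (Remark~\ref{remark:small sections over function fields}), the only real work being to account for the effect of the auxiliary twist by $\pi^*\scrM$ on sup-norms. First I would reduce to the case $e=1$ treated in Proposition~\ref{prop:functionfield}: passing from $L$ to $L^{\otimes e}$ (and rescaling $\scrM$, $\scrN$ accordingly) only rescales heights and forces $n$ to be divisible by $e$, which is harmless since we are free to enlarge $n$. So assume $\scrL$ is a vertically nef model of $L$ itself.

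Fix $t\in\QQ$ close to $0$. Using $\widehat{\h}=\h/((N+1)\deg_L(X))$, the hypothesis of Proposition~\ref{prop:functionfield} reads $r>\tfrac{-\widehat{\h}_{\scrL\otimes\scrN^{\otimes t}}(X)}{\deg_\scrM(C)}+C_1t^2$ for some constant $C_1$ independent of $n$ and of $t$. I would choose a rational number $r$ satisfying this inequality and exceeding $\tfrac{-\widehat{\h}_{\scrL\otimes\scrN^{\otimes t}}(X)}{\deg_\scrM(C)}+C_1t^2$ by an amount that is $O(t^2)$. Then Proposition~\ref{prop:functionfield} applies, and for every $n>0$ large enough (depending on $t$) with $rn,tn\in\ZZ$ there is a nonzero global section $\sigma\in\mathrm{H}^0\big(\scrX,(\scrL\otimes\pi^*\scrM^{\otimes r}\otimes\scrN^{\otimes t})^{\otimes n}\big)$. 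Since $\scrX$ is integral, $\sigma$ restricts to a nonzero section $s$ of $L^{\otimes n}$ on the generic fibre, so $s\in\mathrm{H}^0(X_{\ol{K}},L_{\ol{K}}^{\otimes n})$.

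Next I would compare metrics. The line bundle $(\scrL\otimes\pi^*\scrM^{\otimes r}\otimes\scrN^{\otimes t})^{\otimes n}$ lives on $\scrX$ over all of $C$ and, under the identification of its generic fibre with $L^{\otimes n}$, induces a $v$-adic metric $\norm{\cdot}_v^{\mathrm{mod}}$ at each $v\in\fM$; as $\sigma$ is a genuine global section, Remark~\ref{remark:small sections over function fields} gives $\log\sup_{X_v^{\an}}\norm{s}_v^{\mathrm{mod}}\le 0$ for all $v$. Since $\scrM$ is a line bundle on the regular curve $C$, it is trivial over every local ring $\mathscr{O}_{C,v}$, so the metric that $\pi^*\scrM^{\otimes rn}$ induces on $\mathscr{O}_X$ at $v$ is a positive constant $c_v$ on $X_v^{\an}$ (it is pulled back from $C$), with $c_v=1$ for all but finitely many $v$, and $\norm{\cdot}_v^{\mathrm{mod}}=\norm{\cdot}_{\scrL\otimes\scrN^{\otimes t},v}^{\otimes n}\cdot c_v$. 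The product formula together with Remark~\ref{rmk:height=degree} yields $\sum_{v\in\fM}n_v\log c_v=-rn\deg_\scrM(C)$ --- exactly the contribution appearing in the degree computation inside the proof of Proposition~\ref{prop:functionfield}. Hence
\[
\sum_{v\in\fM}n_v\log\sup\norm{s}_{\scrL\otimes\scrN^{\otimes t},v}^{\otimes n}
=\sum_{v\in\fM}n_v\log\sup\norm{s}_v^{\mathrm{mod}}-\sum_{v\in\fM}n_v\log c_v\le rn\deg_\scrM(C).
\]

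Finally, by the choice of $r$ one has $rn\deg_\scrM(C)=n\big(-\widehat{\h}_{\scrL\otimes\scrN^{\otimes t}}(X)+O(t^2)\big)$ with the implicit constant independent of $n$ and of $t$, which is the asserted bound. The point requiring the most care, and where I would focus, is precisely this last step: the slack between $r$ and the critical value of Proposition~\ref{prop:functionfield} must be taken of order $t^2$ (rather than a fixed $\varepsilon>0$), so that after multiplication by $n$ it is still absorbed into the single $O(t^2)$ term; one must also check that the divisibility constraints $rn,tn\in\ZZ$ only pin $n$ to a congruence class and that the threshold ``$n$ large enough'' is allowed to depend on $t$, both of which are unproblematic.
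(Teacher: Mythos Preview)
Your argument is correct and follows the same strategy as the paper: reduce to $e=1$, invoke Proposition~\ref{prop:functionfield} to produce a global section of $(\scrL\otimes\pi^*\scrM^{\otimes r}\otimes\scrN^{\otimes t})^{\otimes n}$, use Remark~\ref{remark:small sections over function fields} to bound its model sup-norms by~$0$, and then strip off the $\pi^*\scrM^{\otimes r}$ twist, which contributes exactly $rn\deg_\scrM(C)$ to the weighted sum of log-sup-norms. The paper makes this last shift explicit by writing $\scrM=\mathscr{O}_C\big(\sum_v m_v\,v\big)$ and computing $\log\norm{s}^{\otimes n}_{\scrL\otimes(\pi^*\scrM)^{\otimes r}\otimes\scrN^{\otimes t},v}=\log\norm{s}^{\otimes n}_{\scrL\otimes\scrN^{\otimes t},v}-rnm_v$, whereas you obtain the same identity via the height--degree relation of Remark~\ref{rmk:height=degree}; both are fine. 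Your handling of the final step is in fact cleaner than the paper's ``take the limit $r\to(\cdots)^+$'': you fix $r$ rational within an $O(t^2)$ window above the critical value, which makes transparent that the threshold on $n$ depends on $t$ (through $r$) while the implicit constant in $O(t^2)$ does not.
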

\begin{proof}
Without loss of generality, we can assume that $e=1$, since we can replace $L$ by $L^{\otimes e}$ which multiplies both sides of the equality by~$e$.

Let $\sum_{v}m_v\,v$ be a divisor on $C$ such that its associated line bundle is $\scrM$.
Notice that $\deg_{\scrM}(C)=\sum_v n_v m_v$.
For every nonzero global section $s\in \mathrm{H}^0(X,L^{\otimes n})$, we have
\[
\log\norm{s}^{\otimes n}_{\scrL\otimes(\pi^*\scrM)^{\otimes r}\otimes\scrN^{\otimes t},v}
=
\log\norm{s}^{\otimes n}_{\scrL\otimes\scrN^{\otimes t},v}-r\,n\,m_v.
\]
Hence
\[
\sum_{v\in \fM}n_v\log\sup\norm{s}^{\otimes n}_{\scrL\otimes(\pi^*\scrM)^{\otimes r}\otimes\scrN^{\otimes t},v}
=
\sum_{v\in \fM}n_v\log\sup\norm{s}^{\otimes n}_{\scrL\otimes\scrN^{\otimes t},v}-r\,n\,\deg_{\scrM}(C).
\]
Furthermore, the metric of a global section of the model $(\scrL\otimes(\pi^*\scrM)^{\otimes r}\otimes\scrN^{\otimes t})^{\otimes n}$ upper bounds this equality by~$0$, see Remark~\ref{remark:small sections over function fields}.
The existence of such a nonzero model section is guaranteed by Proposition~\ref{prop:functionfield} whenever
\[
r> \frac{-\h_{\scrL\otimes \scrN^{\otimes t}}(X)}{(N+1)\deg_{\scrM}(C)\deg_L(X)}+O(t^2).
\]
Hence, taking the limit
\[
r\longrightarrow  \bigg( \frac{-\h_{\scrL\otimes\scrN^{\otimes t}}(X)}{(N+1)\deg_{\scrM}(C)\deg_L(X)}+ O(t^2) \bigg) ^+
\]
concludes the proof.
\end{proof}

\subsection{Number fields}
In this subsection, let $L$ be only big and nef (not necessarily semiample). Fix an archimededan place $v_0\in\mathfrak{M}$.
Given a metric $(\norm{\cdot}_v)$ on $L$, we denote $\ol{L}(c):=\ol{L}(v_0,c/n_{v_0},1)$ for $c\in\RR$, as in Definition~\ref{perturbed line bundle}. By construction, if $\ol{L}$ is algebraic, then so is~$\ol{L}(c)$.

\begin{lemma}\label{key-nf}
Let $\ol{L}$ be algebraically metrized, $\ol{\mathscr{O}_X}$ be the trivial bundle on $X$ equipped with a DSP algebraic metric, and let $t\in\QQ$ close to~$0$.
Then, for $n$ big enough, there exists a nonzero section $s\in \mathrm{H}^0(X,L^{\otimes n})$ such that
\[
\log\sup\norm{s}^{\otimes n}_{\ol{L}\otimes \ol{\mathscr{O}_X}^{\otimes t},v}\leq
\left\lbrace
\begin{array}{ll}
n\Big(-\frac{1}{n_{v_0}}\widehat{\h}_{\ol{L}\otimes\ol{\mathscr{O}_X}^{\otimes t}}(X)+ O(t^2)\Big)&\mbox{ for }v=v_0,
\vspace{10pt}\\

0& \mbox{ for }v\neq v_0.
\end{array}\right.
\]

\end{lemma}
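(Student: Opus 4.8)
The plan is to follow Yuan's strategy from \cite[\S3]{Yuan:2008}, reducing the statement to an arithmetic bigness assertion which is then obtained from Moriwaki's arithmetic Siu inequality (\hyperref[lemma:Siu-aritmetico]{Lemma~\ref*{lemma:Siu-aritmetico}}) together with Chen's limit formula for the arithmetic volume.

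Write $\ol{M}:=\ol{L}\otimes\ol{\mathscr{O}_X}^{\otimes t}$ and, for $c\in\RR$, let $\ol{M}(c):=\ol{M}(v_0,c/n_{v_0},1)=\ol{L}(c)\otimes\ol{\mathscr{O}_X}^{\otimes t}$, which is again algebraically metrized. Since the perturbing function is the constant $c/n_{v_0}$, its first Chern form vanishes, so \hyperref[lemma about perturbed height]{Lemma~\ref*{lemma about perturbed height}} applied to $X$ (or simply multilinearity of heights) gives $\widehat{\h}_{\ol{M}(c)}(X)=\widehat{\h}_{\ol{M}}(X)+c$. I claim it is enough to show that $\ol{M}(c)$ is arithmetically big, i.e.\ $\widehat{\vol}(\ol{M}(c))>0$, for every $c>-\widehat{\h}_{\ol{M}}(X)+O(t^2)$, with the implicit constant independent of~$n$. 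Indeed, since the arithmetic volume of an algebraically metrized line bundle is a limit (\cite[Theorem~5.2]{Chen:Positive-degree}), bigness of $\ol{M}(c)$ produces, for all $n$ big enough, an $\ol{M}(c)^{\otimes n}$-small section $s\in\mathrm{H}^0(X,L^{\otimes n})\subseteq\mathrm{H}^0(X_{\ol{K}},L_{\ol{K}}^{\otimes n})$; unwinding the perturbed metric, $\log\sup\norm{s}^{\otimes n}_{\ol{M},v}\le0$ for $v\neq v_0$ and $\log\sup\norm{s}^{\otimes n}_{\ol{M},v_0}\le nc/n_{v_0}$, and taking $c$ to be the threshold $-\widehat{\h}_{\ol{M}}(X)$ plus the ($n$-independent) error $O(t^2)$ yields exactly the claimed bounds.

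To prove the bigness I would argue as follows. Write $t=p/q$ with $p,q\in\ZZ$, $q>0$, decompose $\ol{\mathscr{O}_X}=\ol{A}_1\otimes\ol{A}_2^{-1}$, and, after tensoring $\ol{A}_1,\ol{A}_2$ by a sufficiently positive arithmetically ample line bundle, arrange that $\ol{L}_1:=\ol{L}(c)^{\otimes q}\otimes\ol{A}_1^{\otimes p}$ and $\ol{L}_2:=\ol{A}_2^{\otimes p}$ are nef semipositive algebraically metrized line bundles. As $\ol{M}(c)^{\otimes q}=\ol{L}_1\otimes\ol{L}_2^{-1}$, \hyperref[lemma:Siu-aritmetico]{Lemma~\ref*{lemma:Siu-aritmetico}} gives
\[
q^{N+1}\,\widehat{\vol}\big(\ol{M}(c)\big)=\widehat{\vol}\big(\ol{L}_1\otimes\ol{L}_2^{-1}\big)\ \ge\ \h_{\ol{L}_1}(X)-(N+1)\,\h_{\ol{L}_1,\dots,\ol{L}_1,\ol{L}_2}(X).
\]
Expanding both global heights by multilinearity in powers of $p=tq$, every term of order $\ge2$ in $t$ is absorbed into an $O(t^2)$ with constant not involving~$n$, while the terms of order $0$ and $1$ reassemble --- using $\ol{A}_1\otimes\ol{A}_2^{-1}=\ol{\mathscr{O}_X}$ and multilinearity --- into $q^{N+1}\big(\h_{\ol{L}(c)}(X)+(N+1)t\,\h_{\ol{L}(c),\dots,\ol{L}(c),\ol{\mathscr{O}_X}}(X)\big)=q^{N+1}\big(\h_{\ol{M}(c)}(X)+O(t^2)\big)$. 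Dividing by $q^{N+1}$ and using $\h_{\ol{M}(c)}(X)=(N+1)\deg_L(X)\big(\widehat{\h}_{\ol{M}}(X)+c\big)$ together with $\deg_L(X)=\vol(L)>0$, one gets $\widehat{\vol}(\ol{M}(c))\ge(N+1)\deg_L(X)\big(\widehat{\h}_{\ol{M}}(X)+c\big)+O(t^2)$, which is strictly positive precisely when $c>-\widehat{\h}_{\ol{M}}(X)+O(t^2)$.

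I expect the main difficulty to lie in the third paragraph: one must make sure that the metrized line bundles $\ol{L}_1,\ol{L}_2$ fed into the arithmetic Siu inequality are genuinely (arithmetically) nef after the perturbation --- the delicate point being that $\ol{L}(c)$ itself need not be, so one exploits that heights of closed points with respect to a semipositive metrized line bundle are bounded below and absorbs the defect into the positive part $\ol{A}_1$ --- while simultaneously keeping all error terms of order $O(t^2)$ with a constant uniform in~$n$ (in particular, preventing the auxiliary positivity from contributing at order $O(1)$), and checking that $\ol{M}(c)$ stays algebraically metrized so that Chen's limit theorem applies. The remaining verifications (vanishing of the first Chern form of a constant metric, the multilinear reassembly above, and the unwinding of the perturbed metric) are routine.
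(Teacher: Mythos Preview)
Your proposal follows essentially the same strategy as the paper: perturb by a constant at $v_0$, decompose $\ol{\mathscr{O}_X}$ as a quotient of two semipositive nef line bundles, apply Moriwaki's arithmetic Siu inequality (Lemma~\ref{lemma:Siu-aritmetico}), reassemble the multilinear expansion into $\h_{\ol{L}(c)\otimes\ol{\mathscr{O}_X}^{\otimes t}}(X)+O(t^2)$, and read off bigness for $c>-\widehat{\h}_{\ol{M}}(X)+O(t^2)$; then unwind the perturbed metric on a small section. The paper works directly with $\QQ$-line bundles $\ol{L}_1=\ol{L}(c)\otimes\ol{N}_1^{\otimes t}$, $\ol{L}_2=\ol{N}_2^{\otimes t}$ rather than clearing denominators via $t=p/q$, but this is cosmetic.

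Two small remarks. First, your appeal to Chen's limit theorem is not needed: $\widehat{\vol}>0$ (as a $\limsup$) already furnishes small sections for arbitrarily large $n$, and the statement only asks for ``$n$ big enough''. Second, your worry about arithmetic nefness of $\ol{L}_1$ is legitimate, but your proposed fix---absorbing the defect into $\ol{A}_1$---does not work as written: since $\ol{A}_1$ enters to the power $p=tq$, any extra positivity you add there is of order $t$ and cannot compensate a fixed defect of $\ol{L}(c)^{\otimes q}$. The paper simply asserts that $\ol{L}_1$ is a ``semipositive nef metrized $\QQ$-divisor'' and proceeds; it does not spell out a separate argument for this point either.
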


\begin{proof}
First, let us prove that $\ol{L}(c)\otimes\ol{\mathscr{O}_X}^{\otimes t}$ is big for
\begin{equation}\label{eq:NumberField-big-c}
c=-\widehat{\h}_{\ol{L}\otimes\ol{\mathscr{O}_X}^{\otimes t}}(X)
+ O(t^2)+\varepsilon,
\end{equation}
for every $\varepsilon>0$.

Let $(\scrX,\scrL,\scrN)$ be an algebraic model over $\cO_K$ of $(X,L^{\otimes e},\mathscr{O}_X)$, such that the non-archimedean $v$-adic metrics of $\ol{L}$ and $\ol{\mathscr{O}_X}$ are given by this model. This is possible after taking a common model~$\mathscr{X}$, see for instance \cite[Proposition~1.3.6]{Burgos-Philippon-Sombra:2014}.
Up to taking $\ol{\mathscr{O}_X}^{\otimes -1}$ we may assume that $t\geq 0$.
Let $\ol{N}_1,\ol{N}_2$ be two algebraic semipositive nef metrized line bundles on $X$, induced by line bundles $\scrN_1,\scrN_2$ on $\scrX$, such that $\ol{\mathscr{O}_X}\simeq\ol{N}_1\otimes\ol{N}_2^{\otimes -1}$; this comes from $\scrN\simeq\scrN_1\otimes\scrN_2^{\otimes -1}$.
Then, $\ol{L}_1=\ol{L}(c)\otimes \ol{N_1}^{\otimes t}$ and $\ol{L}_2=\ol{N}_2^{\otimes t}$ are two algebraic semipositive nef metrized $\QQ$-divisors such that
\[
\ol{L}(c)\otimes\ol{\scrO_X}^{\otimes t}\simeq \ol{L}_1\otimes\ol{L}_2^{\otimes -1}.
\]
We can apply Lemma~\ref{lemma:Siu-aritmetico} to obtain
\[
\widehat{\vol}(\ol{L}_1\otimes\ol{L}_2^{\otimes -1})
\geq \h_{\ol{L}_1}(X)-(N+1)\h_{\ol{L}_1,\ldots,\ol{L}_1,\ol{L}_2}(X).
\]
Moreover, by the multilinearity of the height function, we readily see that the right-hand side of this inequality amounts to
\[
\h_{\ol{L}(c)\otimes \ol{\scrO_X}^{\otimes t}}(X) + O(t^2).
\]
Then, for $c$ satisfying equality~\eqref{eq:NumberField-big-c} and by the multilinearity of the height,
we have $\h_{\ol{L}(c)\otimes\ol{\mathscr{O}_X}^{\otimes t}}(X)>0$.
This implies the arithmetic bigness of $\ol{L}(c)\otimes\ol{\mathscr{O}_X}^{\otimes t}$ by Lemma~\ref{lemma:Moriwaki}.

Next, since $\ol{L}(c)\otimes\ol{\mathscr{O}_X}^{\otimes t}$ is big, for $n$ big enough, there exists a nonzero section $s\in \mathrm{H}^0(X,L^{\otimes n})$ which is small with respect to the metrized line bundle $\big(\ol{L}(c)\otimes\ol{\scrO_X}^{\otimes t}\big)^{\otimes n}$.
The statement then follows from the equality
\[\log\sup\norm{s}^{\otimes n}_{\ol{L}(c)\otimes\ol{\scrO_X}^{\otimes t},v}=
\left\lbrace
\begin{array}{l}
\log\sup\norm{s}^{\otimes n}_{\ol{L}\otimes\ol{\scrO_X}^{\otimes t},v_0} + n\Bigg(\frac{1}{n_{v_0}}\widehat{\h}_{\ol{L}\otimes\ol{\scrO_X}^{\otimes t}}(X)+O(t^2)-\varepsilon\Bigg),
\vspace{10pt}\\

\log\sup\norm{s}^{\otimes n}_{\ol{L}\otimes\ol{\scrO_X}^{\otimes t},v}\qquad\quad \mbox{if }v\neq v_0,
\end{array}\right.
\]
and taking the limit $\varepsilon\rightarrow 0$.
\end{proof}

\subsection{Proof of Theorem~\ref{thm:key-lemma}}
Let us now prove the main theorem of this section, by using the previous results.
There is a double generalization step going on: first to consider quasi-algebraic metrics, and then to consider their limits.

\begin{proof}[Proof of Theorem~\ref{thm:key-lemma}]
When the metrics $\ol{L}$ and $\ol{\mathscr{O}_X}$ are algebraic and $t\in\QQ$, the result follows directly from Corollary~\ref{key-ff} and Lemma~\ref{key-nf}.

Next, assume $\ol{L}$ and $\ol{\mathscr{O}_X}$ are quasi-algebraic, still defined locally by models of some positive integer power of~$L$.
By Proposition~\ref{prop:panico}, up to a base change to a finite field extension of $K$, the metrics can still be taken as algebraic.
Since $v$-adic metrics (and henceforth heights) are invariant under finite field extensions, the result follows by the fact that $\sum_{w\vert v}n_w=n_v$.

It is left to prove the general case, which is obtained by limit from the above one. Let $t\in\RR$.
By definition, $\ol{L}=(L,(\norm{\cdot}_v))$
can be expressed as the uniform limit of quasi-algebraic semipositive metrized line bundles $\ol{L}_i$ whose $v$-adic metrics are defined by $K^{\circ}_v$-models of positive integer powers of~$L$.
The same holds for $\ol{\mathscr{O}_X}^{\otimes t}$, which can be approximated by a sequence $\ol{\mathscr{O}_{X,i}}^{\otimes t_i}$ of DSP quasi-algebraic metrized line bundles, with $t_i\in\QQ$.
Denote by $d_i=\mathrm{d}(\ol{L}\otimes \ol{\scrO_X}^{\otimes t},\ol{L}_i\otimes \ol{\scrO_{X,i}}^{\otimes t_i})$. 
By Lemma~\ref{lemma:Lipschitz}, there exists a Lipschitz continuous function in $d_i$ bounding
\[
\Abs{\widehat{\h}_{\ol{L}_i\otimes\ol{\mathscr{O}_{X,i}}^{\otimes t_i}}(X)-\widehat{\h}_{\ol{L}\otimes\ol{\mathscr{O}_X}^{\otimes t}}(X)}.
\]
By allowing $d_i$ to get as small as necessary compared to $t^2$, we obtain equation~\eqref{eq:very-small} in general.
\end{proof}

\subsection{Zhang's inequality}

As a first simple application of Theorem~\ref{thm:key-lemma}, we give the following analogue of Zhang's lower inequality on the essential minimum~\cite[Theorem~5.2]{Zhang:1995}.
\begin{corollary}\label{cor:Zhangs-ineq}
Let $X$ be a projective variety over~$K$, and $\ol{L}$ a big and semiample line bundle on $X$ equipped with a semipositive quasi-algebraic metric.
Then, for every integer $d=0,\ldots,\dim(X)$, we have
\[
\e_1^{(d)}(X,\ol{L})\geq\widehat{\h}_{\ol{L}}(X).
\]
\end{corollary}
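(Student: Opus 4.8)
The plan is to deduce this from the Key inequality (Theorem~\ref{thm:key-lemma}), applied with $\ol{\mathscr{O}_X}$ carrying the trivial metric and $t=0$, combined with the description of the essential minimum in Proposition~\ref{prop:equivalent-def-ess-minimum}. Since $L$ is big and semiample, it is in particular big and nef, so Proposition~\ref{prop:equivalent-def-ess-minimum} applies. I would dispose at the outset of the trivial case $\dim(X)=0$ --- then $d=0$ and $\e_1^{(0)}(X,\ol{L})=\widehat{\h}_{\ol{L}}(X)$ by a direct computation --- and assume henceforth $\dim(X)\geq 1$. Write $H_0$ for the proper closed subset of $X_{\ol{K}}$ furnished by Proposition~\ref{prop:generic-big}, so that $\deg_L(Y)>0$ whenever $Y\nsubseteq H_0$.

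By Theorem~\ref{thm:key-lemma} with the above choices, there are an integer $n>0$ and a nonzero section $s\in\mathrm{H}^0(X_{\ol{K}},L_{\ol{K}}^{\otimes n})$ with $\sum_{v\in\fM}n_v\sup\log\norm{s}_v^{\otimes n}\leq-\,n\,\widehat{\h}_{\ol{L}}(X)$ (the $O(t^2)$ term vanishes at $t=0$). Because $L$ is big, $s$ vanishes along a nonempty effective divisor, so $H:=\abs{\div(s)}$ is a closed subset of $X_{\ol{K}}$ of codimension~$1$. Let $Y$ be any $d$-dimensional subvariety with $Y\nsubseteq H\cup H_0$; then $s$ intersects $Y$ properly. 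Substituting the pointwise bound $-\log\norm{s}_w^{\otimes n}\geq-\sup\log\norm{s}_w^{\otimes n}$ into the defining integral~\eqref{definition of local integrals} of $I_{\ol{L}_w}(Y,s)$, and using that $\c_1(\ol{L}_w)^{\wedge d}\wedge\delta_{Y^{\an}_w}$ is a positive measure of total mass $\deg_L(Y)$ by Proposition~\ref{prop:measure-properties}(1), that $\sum_{w\mid v}n_w=n_v$, and that the supremum of $\log\norm{s}_w^{\otimes n}$ over $X_w^{\an}$ coincides with that over $X_v^{\an}$, I obtain
\[
I_{\ol{L}}(Y,s)\;\geq\;-\,\frac{\deg_L(Y)}{n}\sum_{v\in\fM}n_v\sup\log\norm{s}_v^{\otimes n}\;\geq\;\widehat{\h}_{\ol{L}}(X)\,\deg_L(Y).
\]
As $\deg_L(Y)>0$, this gives $\sup_{s'}I_{\ol{L}}(Y,s')/\deg_L(Y)\geq\widehat{\h}_{\ol{L}}(X)$, the supremum being taken over the nonzero sections $s'$ of powers of $L_{\ol{K}}$ with $Y\nsubseteq\abs{\div(s')}$. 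Since $Y$ was an arbitrary $d$-dimensional subvariety with $Y\nsubseteq H\cup H_0$, feeding this into the right-hand side of Proposition~\ref{prop:equivalent-def-ess-minimum} for the particular subset $H$ yields $\e_1^{(d)}(X,\ol{L})\geq\widehat{\h}_{\ol{L}}(X)$.

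The argument is short; the step needing the most care is the matching of the place-sum defining $I_{\ol{L}}(Y,s)$ --- which runs over the places of a finite extension $K'$ of $K$ over which $Y$ and $s$ are defined --- with the $\fM=\fM_K$-indexed inequality produced by Theorem~\ref{thm:key-lemma}. This relies on the weight normalization $\sum_{w\mid v}n_w=n_v$, on the identification $X_w^{\an}=X_v^{\an}$ for $w\mid v$ (so that the sup-norms agree), and on the independence of $I_{\ol{L}}(Y,s)$ from the choice of $K'$ (Remark~\ref{correcting integrals as height jumps}). The one genuine positivity input is that the Chambert-Loir measure $\c_1(\ol{L}_w)^{\wedge d}\wedge\delta_{Y^{\an}_w}$ is positive with total mass $\deg_L(Y)$, which is exactly where the semipositivity of the metric on $L$ is used.
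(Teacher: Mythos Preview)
Your proof is correct and follows the same route as the paper: apply Theorem~\ref{thm:key-lemma} with the trivial metric on $\mathscr{O}_X$ to produce a section whose sup-norm sum is controlled, bound $I_{\ol{L}}(Y,s)/\deg_L(Y)$ from below via the positivity and total mass of the Chambert-Loir measures, and plug into Proposition~\ref{prop:equivalent-def-ess-minimum}. The only difference is cosmetic: the paper fixes $\varepsilon>0$ and takes $t$ small rather than setting $t=0$ outright---a mild precaution, since the limit arguments closing the proofs of Corollary~\ref{key-ff} and Lemma~\ref{key-nf} do not literally deliver a section achieving the sharp bound at $t=0$, only one within any prescribed $\varepsilon$ of it.
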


\begin{proof}
Equip $\mathscr{O}_X$ with the trivial metric. For every $\varepsilon>0$, choose $t$ close enough to $0$ such that, by Theorem~\ref{thm:key-lemma}, there exists $s\in \mathrm{H}^0(X_{\ol{K}},L_{\ol{K}}^{\otimes n})$ for $n$ big enough satisfying
\[
\frac{I_{\overline{L}}(Y,s)}{\deg_L(Y)}\geq\widehat{\h}_{\ol{L}}(X)-\varepsilon
\]
for every $d$-dimensional subvariety $Y$ of $X_{\overline{K}}$ that does not lie neither in $\abs{\div(s)}$ nor in the Zariski closed subset $H_0$ defined in \hyperref[prop:generic-big]{Proposition \ref*{prop:generic-big}}.
Therefore, by \hyperref[prop:equivalent-def-ess-minimum]{Proposition \ref*{prop:equivalent-def-ess-minimum}} we obtain the result.
\end{proof}

Notice that in the case of function fields (and $d=0$), Gubler~\cite{Gubler:2008} gave a proof of Zhang's inequality that also allowed to consider the case of function fields of higher dimensional varieties.
However, his argument relies on a reduction to the function field case of curves that strongly uses the fact that $\e_1^{(0)}(X,\ol{L})$ (the essential minimum of points) does not depend on the choice of the section, bypassing the need of a ``Key inequality'' (of the form of Theorem~\ref{thm:key-lemma}) that holds for function fields of higher dimensional varieties.
It is our impression that, in the setting of this article, we cannot avoid Theorem~\ref{thm:key-lemma} to prove Corollary~\ref{cor:Zhangs-ineq} in this case.

%
%
%
%
%
%
%
%
%
%
%
%
%
%
%
%
%
%
%
%
%
%
%
%
%
%
%
%
%
%
%
%
%
%
%
%
%
%

\section{Equidistribution of small effective cycles}\label{section 4}

Fix for the entire section the choice of a projective variety $X$ over~$K$.
A cycle of $X_{\ol{K}}$ is an element of the free abelian group generated by the subvarieties of~$X_{\ol{K}}$.
Recall that it is said to be \emph{effective} if all of its coefficients are nonnegative, it is called \emph{of pure dimension $d$} (or a \emph{$d$-cycle}) if it is a linear combination of subvarieties of $X_{\ol{K}}$ of dimension~$d$, and it is said to be \emph{Galois invariant} if it is fixed by all the elements of~$\Gal(\ol{K}/K)$.

In this section, we are interested in the study of the interactions between the geometric and arithmetic properties of a net $(Z_m)_m$ of Galois invariant effective $d$-cycles of~$X$.
In this case, we write each member of the net as the formal finite sum
\begin{equation}\label{cycles decomposition}
Z_m=\sum_ia_{m,i}\, Y_{m,i},
\end{equation}
with $a_{m,i}\in\mathbb{N}$ and $Y_{m,i}$ being a subvariety of $X_{\ol{K}}$ of dimension $d$ for all~$i$.
After defining the notions of genericity and smallness for such nets, we focus on the statement and proof of the equidistribution theorem by means of the higher dimensional essential minima introduced above.

\subsection{Generic and small nets of cycles}

The two following definitions for nets of cycles over a directed set $(\mathcal{J},\preceq)$ are fundamental for equidistribution statements.
The first one is of geometric nature and formalizes the requirement that the members of the net have a negligible summand in any closed subset of $X_{\ol{K}}$ of codimension~$1$.

\begin{definition}\label{definition genericity}
Let $L$ be a big and nef line bundle on~$X$. 
A net $(Z_m)_{m\in\mathcal{J}}$ of cycles of $X_{\ol{K}}$ is said to be \emph{$L$-generic} if the degree of $Z_m$ with respect to $L$ is eventually nonzero and, for every closed subset $H\subseteq X_{\ol{K}}$ of codimension~$1$, we have that
\[
\lim_{m}\frac{1}{\deg_L(Z_m)}\sum_{Y_{m,i}\subseteq H}a_{m,i}\deg_L(Y_{m,i})=0,
\]
with notation as in \eqref{cycles decomposition}.
\end{definition}

Notice that, if $(Y_m)_{m}$ is a net of subvarieties of $X_{\ol{K}}$, \hyperref[prop:generic-big]{Proposition~\ref*{prop:generic-big}} assures that the previous definition is equivalent to the following statement: for every closed subset $H$ of $X_{\ol{K}}$ of codimension~$1$ there exists $m_0\in\mathcal{J}$ such that $Y_m\nsubseteq H$ for all~$m\succeq m_0$.
In such a case, and as in \cite[6.2]{Gubler:2008}, the net $(Y_m)_{m}$ is simply called \emph{generic}, to underline its independence on the choice of $L$.
This agrees with \hyperref[def introduction generic and small]{Definition~\ref*{def introduction generic and small}} in the Introduction.

\begin{remark}
A sequence $(Y_m)_{m}$ of subvarieties of $X_{\ol{K}}$ is generic if and only if for every closed subset $H\subseteq X_{\ol{K}}$ of codimension~$1$, the set $\{m\in\mathbb{N}\,\mid\,Y_m\subseteq H\}$ is finite, which agrees with the classical definition.
\end{remark}

\begin{remark}\label{rmk:genericity for Galois cycle}
Let $Y$ be a subvariety of $X_{\ol{K}}$ of dimension~$d$.
For every element $\sigma$ of the absolute Galois group of~$K$, denote by $Y^\sigma$ the corresponding Galois conjugate of~$Y$; it is again a subvariety of $X_{\ol{K}}$ of dimension~$d$.
The finite set $O(Y):=\{Y^\sigma\mid\sigma\in\Gal(\ol{K}/K)\}$ is called the \emph{Galois orbit} of $Y$ over $K$ and the cycle
\[
Y^{\Gal}:=\sum_{Y^\sigma\in O(Y)}Y^\sigma
\]
is called the \emph{Galois cycle} of~$Y$.
It is a Galois invariant $d$-cycle of $X_{\ol{K}}$ by construction, with degree~$\#O(Y)\,\deg_L(Y)$.

A net $(Y_m)_m$ of subvarieties of $X_{\ol{K}}$ is $L$-generic if and only if the net of their associated Galois cycles is.
Indeed, if $Y_m$ lies in a one codimensional closed subset $H$ of~$X_{\ol{K}}$, then the support of $Y_m^{\Gal}$ is contained in the union of Galois conjugates of~$H$.
Conversely, if $Y_m^{\Gal}$ has a summand contained in~$H$, then $Y_m$ lies in the union of Galois conjugates of~$H$.
\end{remark}

The notion of genericity of a net of $d$-dimensional subvarieties of $X_{\ol{K}}$ is intimately related with the $d$-dimensional essential minimum of a semipositive quasi-algebraic metrized line bundle, as the next statement shows.

\begin{proposition}\label{prop:inequality liminf-essential minimum}
Let $\ol{L}$ be a semipositive quasi-algebraic metrized line bundle on $X$ with $L$ big and nef.
Let $d=0,\ldots,\dim(X)$, and $(Y_m)_{m\in\mathcal{J}}$ be a generic net of $d$-dimensional subvarieties of~$X_{\ol{K}}$.
Then,
\begin{equation}\label{inequality liminf-essential minimum}
\liminf_{m}\sup_{\substack{s\in\mathrm{H}^0(X_{\ol{K}},L_{\ol{K}}^{\otimes n})\\n\in\mathbb{N}\setminus\{0\}\\Y_m\nsubseteq|\div(s)|}}\frac{I_{\ol{L}}(Y_m,s)}{\deg_L(Y_m)}\geq\e_1^{(d)}(X,\ol{L}).
\end{equation}
Moreover, there exists a generic net $(Y_m)_{m\in\mathcal{J}}$ of $d$-dimensional subvarieties of $X_{\ol{K}}$ satisfying
\begin{equation}\label{equality liminf-essential minimum}
\lim_{m}\sup_{\substack{s\in\mathrm{H}^0(X_{\ol{K}},L_{\ol{K}}^{\otimes n})\\n\in\mathbb{N}\setminus\{0\}\\Y_m\nsubseteq|\div(s)|}}\frac{I_{\ol{L}}(Y_m,s)}{\deg_L(Y_m)}=\e_1^{(d)}(X,\ol{L}).
\end{equation}
\end{proposition}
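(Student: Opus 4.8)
The strategy is to treat the two assertions separately: inequality~\eqref{inequality liminf-essential minimum} is obtained by unwinding Definition~\ref{definition d-dimensional successive minima} together with a short contradiction argument, while a net realizing equality~\eqref{equality liminf-essential minimum} is constructed by hand, indexed by pairs $(H,\varepsilon)$ with $H$ a codimension-one closed subset of $X_{\ol{K}}$ and $\varepsilon>0$. Throughout, for a subvariety $Y$ with $\deg_L(Y)\neq 0$ abbreviate $A(Y):=\sup\bigl\{I_{\ol{L}}(Y,s)/\deg_L(Y)\ :\ s\in\mathrm{H}^0(X_{\ol{K}},L_{\ol{K}}^{\otimes n}),\ n\geq 1,\ Y\nsubseteq|\div(s)|\bigr\}$, so that the left-hand side of~\eqref{inequality liminf-essential minimum} is $\liminf_m A(Y_m)$. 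To prove~\eqref{inequality liminf-essential minimum}, suppose $\liminf_m A(Y_m)=\ell<\e_1^{(d)}(X,\ol{L})$ and fix $\eta_0$ with $\ell<\eta_0<\e_1^{(d)}(X,\ol{L})$. By Definition~\ref{definition d-dimensional successive minima} we have $X^{(d)}(\eta_0,\ol{L})\subsetneq X_{\ol{K}}$, so $X^{(d)}(\eta_0,\ol{L})\cup H_0$ is a proper closed subset, where $H_0$ is the Zariski closed subset of Proposition~\ref{prop:generic-big}; enlarge it to a closed subset $H$ of codimension one (any proper closed subset is contained in one of codimension one, e.g. by adjoining a general hyperplane section). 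By genericity there is $m_0$ with $Y_m\nsubseteq H$ for all $m\succeq m_0$; for such $m$ one has $\deg_L(Y_m)>0$ by Proposition~\ref{prop:generic-big}, and $Y_m$ cannot satisfy~\eqref{eq:essentialminimum-ineq} with $\eta=\eta_0$, for otherwise $Y_m$ would belong to the union whose Zariski closure is $X^{(d)}(\eta_0,\ol{L})\subseteq H$. Dividing~\eqref{eq:essentialminimum-ineq} by $\deg_L(Y_m)>0$, the failure of~\eqref{eq:essentialminimum-ineq} reads $A(Y_m)>\eta_0$ for all $m\succeq m_0$, whence $\ell=\liminf_m A(Y_m)\geq\eta_0>\ell$, a contradiction.

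For the existence statement, note first that $\e_1^{(d)}(X,\ol{L})\neq-\infty$ since $L$ is big and nef (Lemma~\ref{first properties of successive minima}(4)), and if $\e_1^{(d)}(X,\ol{L})=+\infty$ then by the first part any generic net of $d$-dimensional subvarieties already satisfies~\eqref{equality liminf-essential minimum} with both sides equal to $+\infty$ — such nets exist because, for every codimension-one closed subset, one may choose a general $d$-dimensional complete intersection not contained in it. So assume $\e_1^{(d)}(X,\ol{L})\in\RR$. Let $\mathcal{J}$ be the set of all pairs $(H,\varepsilon)$ as above, ordered by $(H,\varepsilon)\preceq(H',\varepsilon')$ iff $H\subseteq H'$ and $\varepsilon\geq\varepsilon'$; it is directed via $(H_1\cup H_2,\min\{\varepsilon_1,\varepsilon_2\})$. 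Fix $(H,\varepsilon)\in\mathcal{J}$. Since $L$ is nef, Lemma~\ref{first properties of successive minima}(3) together with the definition of $\e_1^{(d)}(X,\ol{L})$ as an infimum gives $X^{(d)}(\e_1^{(d)}(X,\ol{L})+\varepsilon,\ol{L})=X_{\ol{K}}$, which is not contained in the proper closed subset $H\cup H_0$. As this set is, by definition, the Zariski closure of the union of the $d$-dimensional subvarieties satisfying~\eqref{eq:essentialminimum-ineq} with $\eta=\e_1^{(d)}(X,\ol{L})+\varepsilon$, at least one such subvariety $Y_{(H,\varepsilon)}$ is not contained in $H\cup H_0$; take it as the $(H,\varepsilon)$-th term of the net. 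Then $Y_{(H,\varepsilon)}\nsubseteq H$ shows the net $(Y_{(H,\varepsilon)})_{(H,\varepsilon)\in\mathcal{J}}$ is generic, $\deg_L(Y_{(H,\varepsilon)})>0$ by Proposition~\ref{prop:generic-big}, and dividing~\eqref{eq:essentialminimum-ineq} gives $A(Y_{(H,\varepsilon)})\leq\e_1^{(d)}(X,\ol{L})+\varepsilon$. Letting $(H,\varepsilon)$ run through $\mathcal{J}$, so that $\varepsilon\to0$, we obtain $\limsup_{(H,\varepsilon)}A(Y_{(H,\varepsilon)})\leq\e_1^{(d)}(X,\ol{L})$; combined with the lower bound on the $\liminf$ furnished by the first part, the limit exists and equals $\e_1^{(d)}(X,\ol{L})$, which is~\eqref{equality liminf-essential minimum}.

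I do not expect a genuine obstacle: the proof is essentially bookkeeping with the definition of $X^{(d)}(\eta,\ol{L})$ and of genericity. The points deserving attention are keeping the exceptional locus $H_0$ in play so that the degrees $\deg_L(Y_m)$ are strictly positive and the normalization by $\deg_L(Y_m)$ is legitimate; the use of nefness of $L$ both for the monotonicity of $\eta\mapsto X^{(d)}(\eta,\ol{L})$ and for that positivity; and the elementary remark that genericity against codimension-one closed subsets coincides with genericity against all proper closed subsets, since the latter can always be enlarged to the former.
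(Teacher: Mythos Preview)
Your proof is correct. Both parts follow essentially the same logic as the paper, with two minor but genuine differences worth noting. For the inequality~\eqref{inequality liminf-essential minimum}, the paper routes through the sup--inf--sup reformulation of Proposition~\ref{prop:equivalent-def-ess-minimum}, whereas you argue by contradiction directly from Definition~\ref{definition d-dimensional successive minima} and the set $X^{(d)}(\eta_0,\ol{L})$; your route is slightly more self-contained since it does not invoke that proposition. For the existence statement, the paper indexes its net by codimension-one closed subsets $H$ alone and manufactures the vanishing ``$\varepsilon$'' as $1/\ell_H$ (the reciprocal of the number of irreducible components of $H$), while you take the more orthodox directed set of pairs $(H,\varepsilon)$; both constructions yield the same conclusion, the paper's being marginally slicker and yours being the textbook move. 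Your handling of the side cases ($\e_1^{(d)}(X,\ol{L})=+\infty$, positivity of $\deg_L(Y_m)$ via $H_0$, enlargement of proper closed subsets to codimension one) is fine.
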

\begin{proof}
Let $H_0$ denote the closed subset of $X_{\ol{K}}$ introduced in \hyperref[prop:generic-big]{Proposition~\ref*{prop:generic-big}}.
For simplicity of notation, write
\[
F(Y):=\sup_{\substack{s\in\mathrm{H}^0(X_{\ol{K}},L_{\ol{K}}^{\otimes n})\\n\in\mathbb{N}\setminus\{0\}\\Y\nsubseteq|\div(s)|}}\frac{I_{\ol{L}}(Y,s)}{\deg_L(Y)}
\]
for every subvariety $Y$ of $X_{\ol{K}}$ such that~$Y\nsubseteq H_0$.
Let $H$ be any closed subset of $X_{\ol{K}}$ of codimension~$1$.
By genericity of the net $(Y_m)_{m}$, there exists $k_H\in\mathcal{J}$ such that the subvariety $Y_m$ is not contained in $H\cup H_0$ whenever~$m\succeq k_H$.
Hence,
\[
\liminf_mF(Y_m)=\adjustlimits\sup_k\inf_{m\succeq k}F(Y_m)\geq\inf_{m\succeq k_H}F(Y_m)\geq\inf_{\substack{Y\subseteq X_{\ol{K}}\\\dim(Y)=d\\Y\nsubseteq H\cup H_0}}F(Y).
\]
As this is true for all choice of~$H$, \hyperref[prop:equivalent-def-ess-minimum]{Proposition~\ref*{prop:equivalent-def-ess-minimum}} implies \eqref{inequality liminf-essential minimum}.

For the second claim, consider the directed set $(\mathcal{J},\subseteq)$ consisting of all closed subsets of $X_{\ol{K}}$ of pure codimension $1$ and containing~$H_0$, endowed with the usual inclusion relation.
For every~$H\in\mathcal{J}$, \hyperref[prop:equivalent-def-ess-minimum]{Proposition~\ref*{prop:equivalent-def-ess-minimum}} ensures that
\[
\inf_{\substack{Y\subseteq X_{\ol{K}}\\\dim(Y)=d\\Y\nsubseteq H}}F(Y)\leq\e_1^{(d)}(X,\ol{L}).
\]
Hence, there exists a subvariety $Y_H\subseteq X_{\ol{K}}$ of dimension $d$, not contained in $H$ and satisfying
\begin{equation}\label{recursive definition of small sequence}
F(Y_H)\leq\e_1^{(d)}(X,\ol{L})+\frac{1}{\ell_H},
\end{equation}
with $\ell_H$ being the number of irreducible components of~$H$. 
Notice that the function $\ell_{\bullet}$ from $(\mathcal{J},\subseteq)$ to $\mathbb{N}$ is strictly increasing.
Therefore, combining \eqref{inequality liminf-essential minimum} and~\eqref{recursive definition of small sequence},
\[
\e_1^{(d)}(X,\ol{L})\leq\liminf_H F(Y_H)\leq\limsup_H F(Y_H)\leq\e_1^{(d)}(X,\ol{L}).
\]
This shows that the net $(Y_H)_{H\in\mathcal{J}}$ satisfies \eqref{equality liminf-essential minimum}.
Moreover, for every closed subset $H$ of $X_{\ol{K}}$ of codimension~$1$ consider $H^\prime\in\mathcal{J}$ such that $H\subseteq H^\prime$.
By construction, $Y_{H^\prime}\nsubseteq H$, so the definition of the preorder on~$\mathcal{J}$ implies that the net $(Y_H)_{H\in\mathcal{J}}$ is generic. 
\end{proof}

\begin{remark}
When the base field $K$ is countable (for instance when $K$ is a number field), equality \eqref{equality liminf-essential minimum} holds for a generic sequence $(Y_m)_{m\in\NN}$ of $d$-dimensional subvarieties of~$X_{\ol{K}}$.
Indeed, in such a case the collection of irreducible closed subsets of $X_{\ol{K}}$ of pure codimension $1$ is countable.
One can write it as $\{H_1,H_2,\dots\}$ and assume that~$H_0\subseteq H_1$, where $H_0$ is the closed subset of \hyperref[prop:generic-big]{Proposition~\ref*{prop:generic-big}}.
To obtain the claim, it suffices to repeat the argument in the previous proof by taking $\mathcal{J}$ to be the countable family whose $k$-th element is~$H_1\cup\ldots\cup H_k$.
\end{remark}

\begin{remark}\label{rmk:another equivalent definition}
The relations proven in \hyperref[prop:inequality liminf-essential minimum]{Proposition~\ref*{prop:inequality liminf-essential minimum}} give a third equivalent definition for the $d$-dimensional essential minimum of a semipositive quasi-algebraic metrized line bundle $\ol{L}$ with $L$ big and nef, that is
\[
\e_1^{(d)}(X,\ol{L})=\min_{(Y_m)_m}\;\liminf_m\sup_{\substack{s\in\mathrm{H}^0(X_{\ol{K}},L_{\ol{K}}^{\otimes n})\\n\in\mathbb{N}\setminus\{0\}\\Y_m\nsubseteq|\div(s)|}}\frac{I_{\ol{L}}(Y_m,s)}{\deg_L(Y_m)},
\]
where the minimum is taken over the family of generic nets $(Y_m)_{m\in\mathcal{J}}$ of $d$-dimensional subvarieties of~$X_{\ol{K}}$.
\end{remark}

The result of \hyperref[prop:inequality liminf-essential minimum]{Proposition~\ref*{prop:inequality liminf-essential minimum}} suggests the following arithmetic notion for a net of effective cycle.

\begin{definition}\label{definition smallness}
Let $\ol{L}$ be a semipositive quasi-algebraic metrized line bundle on $X$ and~$d=0,\dots,\dim(X)$.
A net $(Z_m)_{m}$ of effective cycles of pure dimension $d$ in~$X_{\ol{K}}$, whose degrees with respect to $L$ are eventually nonzero, is said to be \emph{$\overline{L}$-small} if
\[
\lim_m\frac{1}{\deg_L(Z_m)}\sum_ia_{m,i}\sup_{\substack{s\in\mathrm{H}^0(X_{\ol{K}},L_{\ol{K}}^{\otimes n})\\n\in\mathbb{N}\setminus\{0\}\\Y_{m,i}\nsubseteq|\div(s)|}}I_{\ol{L}}(Y_{m,i},s)=\e_1^{(d)}(X,\ol{L}),
\]
with notation as in~\eqref{cycles decomposition}.
\end{definition}

In particular, a generic net of $d$-dimensional subvarieties is $\overline{L}$-small if and only if it satisfies~\eqref{equality liminf-essential minimum},
which agrees with Definition~\ref{def introduction generic and small} in the Introduction.
Loosely speaking, this is equivalent to the requirement that the asymptotic behavior of the (normalized) maximal ``height-gap'' of its members is as small as possible, which justifies the adopted terminology.

\begin{remark}
A net $(p_m)_{m}$ of closed points in $X_{\ol{K}}$ is $\overline{L}$-small if and only if
\[
\lim_m\h_{\overline{L}}(p_m)=\e_1^{(0)}(X,\ol{L}).
\]
This agrees, for sequences, with the classical definition.
A comparison to previous notions of smallness for higher dimensional subvarieties is carried out in \hyperref[subsection comparison]{subsection~5.A}.
\end{remark}

\begin{remark}\label{rmk:smallness for Galois cycle}
A net $(Y_m)_m$ of $d$-dimensional subvarieties of $X_{\ol{K}}$ is $\ol{L}$-small if and only if the net of their associated Galois cycles, as in \hyperref[rmk:genericity for Galois cycle]{Remark~\ref*{rmk:genericity for Galois cycle}}, is such.
This follows from the observation that, for every Galois automorphism $\sigma$ of $\ol{K}$, we have
\[
\sup_{\substack{s\in\mathrm{H}^0(X_{\ol{K}},L_{\ol{K}}^{\otimes n})\\n\in\mathbb{N}\setminus\{0\}\\Y_m\nsubseteq|\div(s)|}}I_{\ol{L}}(Y_m,s)
=
\sup_{\substack{s\in\mathrm{H}^0(X_{\ol{K}},L_{\ol{K}}^{\otimes n})\\n\in\mathbb{N}\setminus\{0\}\\Y^\sigma_m\nsubseteq|\div(s)|}}I_{\ol{L}}(Y^\sigma_m,s),
\]
since $L$ is defined on $K$ and correcting integrals are invariant under Galois action (using for instance \hyperref[correcting integrals as height jumps]{Remark~\ref*{correcting integrals as height jumps}} and the analogous property for heights).
\end{remark}

\begin{remark}\label{rmk:essential minimum and small net of Galois invariant cycle}
Let $(Z_m)_m$ be a $\ol{L}$-small net of Galois invariant effective $d$-cycles in~$X_{\ol{K}}$.
Then, each member of the net can be written as $Z_m=\sum_ia_{m,i}Y_{m,i}^{\Gal}$, with each $Y_{m,i}$ being a $d$-dimensional subvariety of~$X_{\ol{K}}$, and $Y_{m,i}^{\Gal}$ the corresponding Galois cycle as in \hyperref[rmk:genericity for Galois cycle]{Remark~\ref*{rmk:genericity for Galois cycle}}.
The $\ol{L}$-smallness and the effectiveness of the net yield
\begin{equation*}
\begin{split}
\e_1^{(d)}(X,\ol{L})
&=
\limsup_m\frac{1}{\deg_L(Z_m)}\sum_ia_{m,i}\sum_{Y^\sigma_{m,i}\in O(Y_{m,i})}\sup_{\substack{s\in\mathrm{H}^0(X_{\ol{K}},L_{\ol{K}}^{\otimes n})\\Y^\sigma_{m,i}\nsubseteq|\div(s)|}} I_{\ol{L}}(Y_{m,i}^\sigma,s)
\\&\geq
\limsup_m\frac{1}{\deg_L(Z_m)}\sum_ia_{m,i}\,\sup_s I_{\ol{L}}\big(Y_{m,i}^{\Gal},s\big),
\end{split}
\end{equation*}
where the last supremum is taken over the global sections $s$ of tensor powers of $L_{\ol{K}}$ which intersect $Y^{\Gal}_{m,i}$ properly.
As the sections are not necessarily defined over~$K$, the previous inequality may be strict; this is a reason for \hyperref[definition smallness]{Definition~\ref*{definition smallness}}.
\end{remark}

It is clear that for a net of $d$-dimensional effective cycles of $X_{\ol{K}}$ being generic and small are unrelated requirements.
For instance, if $X$ is the projective line over the field of rational numbers and $\ol{L}$ is the line bundle $\mathscr{O}(1)$ equipped with the canonical metric, the sequence which is constantly equal to the point $[1:1]$ is $\ol{L}$-small but not generic, whereas the sequence $([1:m])_{m\in\NN}$ is generic but not $\ol{L}$-small.
The equidistribution result of next section concerns the nets of effective cycles which are both generic and small.

\subsection{Equidistribution theorems}

For a topological space~$X$, denote by $\mathcal{C}_b(X,\mathbb{R})$ the real vector space of bounded continuous real-valued functions on~$X$.
Recall that a net $(\mu_m)_{m}$ of Borel probability measures on $X$ is said to \emph{converge weakly} to another Borel probability measure $\mu$ on $X$ if
\begin{equation}\label{convergence of integrals}
\int_Xf\ d\mu_m\longrightarrow\int_Xf\ d\mu
\end{equation}
for every~$f\in\mathcal{C}_b(X,\mathbb{R})$.
The following criterion allows to prove such a weak convergence by only checking \eqref{convergence of integrals} for a big enough family of bounded continuous functions on~$X$.

\begin{proposition}[Weyl's criterion]\label{Weyl criterion}
Let $X$ be a topological space, and $\mathcal{A}$ be a subset of $\mathcal{C}_b(X,\mathbb{R})$ such that the vector subspace generated by $\mathcal{A}$ is dense in $\mathcal{C}_b(X,\mathbb{R})$ with respect to the uniform convergence topology.
Then, a net of Borel probability measures $(\mu_m)_{m\in\mathcal{J}}$ on $X$ converges weakly to a Borel probability measure $\mu$ if and only if the convergence in \eqref{convergence of integrals} holds for every~$f\in\mathcal{A}$.
\end{proposition}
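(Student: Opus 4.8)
The plan is to prove the two implications separately, the forward one being trivial and the reverse one being the substantive part. For the forward direction: if $(\mu_m)_m$ converges weakly to $\mu$, then by definition \eqref{convergence of integrals} holds for \emph{every} $f\in\mathcal{C}_b(X,\mathbb{R})$, in particular for every $f\in\mathcal{A}$; there is nothing to check.

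For the reverse direction, assume \eqref{convergence of integrals} holds for all $f\in\mathcal{A}$. First I would upgrade this to the $\mathbb{R}$-linear span $V$ of $\mathcal{A}$: if $g=\sum_{j=1}^k\lambda_j f_j$ with $f_j\in\mathcal{A}$ and $\lambda_j\in\mathbb{R}$, then by linearity of the integral $\int_X g\,d\mu_m=\sum_j\lambda_j\int_X f_j\,d\mu_m\to\sum_j\lambda_j\int_X f_j\,d\mu=\int_X g\,d\mu$. So \eqref{convergence of integrals} holds for every $g\in V$. Next I would pass to the uniform closure: let $f\in\mathcal{C}_b(X,\mathbb{R})$ be arbitrary and fix $\varepsilon>0$. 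By the density hypothesis there is $g\in V$ with $\sup_{x\in X}|f(x)-g(x)|\leq\varepsilon$. Then, using that $\mu_m$ and $\mu$ are probability measures (total mass $1$), the standard three-term estimate gives
\[
\Bigl|\int_X f\,d\mu_m-\int_X f\,d\mu\Bigr|
\leq\Bigl|\int_X (f-g)\,d\mu_m\Bigr|+\Bigl|\int_X g\,d\mu_m-\int_X g\,d\mu\Bigr|+\Bigl|\int_X (g-f)\,d\mu\Bigr|
\leq 2\varepsilon+\Bigl|\int_X g\,d\mu_m-\int_X g\,d\mu\Bigr|.
\]
Since $g\in V$, the middle term tends to $0$ along the net, so $\limsup_m\bigl|\int_X f\,d\mu_m-\int_X f\,d\mu\bigr|\leq 2\varepsilon$. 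As $\varepsilon>0$ was arbitrary, the limit is $0$, i.e.\ \eqref{convergence of integrals} holds for $f$. Since $f\in\mathcal{C}_b(X,\mathbb{R})$ was arbitrary, $(\mu_m)_m$ converges weakly to $\mu$.

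The only mild subtlety — and the one point worth stating carefully — is the use of the \emph{boundedness} of the measures in the first and third terms: one needs $\bigl|\int_X(f-g)\,d\mu_m\bigr|\leq\varepsilon\,\mu_m(X)=\varepsilon$ and likewise for $\mu$, which is exactly where the hypothesis that all measures are probability measures enters (any uniform bound on the masses would do). No other obstacle arises; the argument is a routine density-plus-$\varepsilon/3$ argument and applies verbatim with $X=X_v^{\mathrm{an}}$ and $\mathcal{A}$ the set of (exponentials of, or simply the) $v$-adic elementary functions, whose span is dense in $\mathcal{C}(X_v^{\mathrm{an}},\mathbb{R})=\mathcal{C}_b(X_v^{\mathrm{an}},\mathbb{R})$ by Theorem~\ref{density of brick functions} since $X_v^{\mathrm{an}}$ is compact.
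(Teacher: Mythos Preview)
Your proof is correct and follows essentially the same approach as the paper: both argue the trivial forward direction and then, for the converse, extend the convergence from $\mathcal{A}$ to its linear span by linearity of the integral and pass to the uniform closure via the standard three-term (``$\varepsilon/3$'') estimate, using that probability measures have total mass~$1$. The only cosmetic difference is that the paper normalizes to $\varepsilon/3$ from the start whereas you conclude with a $\limsup\leq 2\varepsilon$; your explicit remark on where the probability-measure hypothesis enters is a welcome addition.
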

\begin{proof}
One direction is obvious.
For the converse, fix~$f\in\mathcal{C}_b(X,\mathbb{R})$.
By density, for every $\varepsilon>0$ there exists a linear combination $f_{\varepsilon}$ of elements of $\mathcal{A}$ such that~$\|f-f_\varepsilon\|_{\sup}<\varepsilon/3$.
The linearity of the integral assures that \eqref{convergence of integrals} holds for~$f_\varepsilon$, then by hypothesis there exists $m_0\in\mathcal{J}$ such that
\[
\bigg|\int_Xf_\varepsilon\ d\mu_m-\int_Xf_\varepsilon\ d\mu\bigg|<\frac{\varepsilon}{3}
\]
for every~$m\succeq m_0$.
It follows that, for any such $m\in\mathcal{J}$,
\begin{multline*}
\bigg|\int_Xf\ d\mu_m-\int_Xf d\mu\bigg|
\leq
\bigg|\int_X(f-f_\varepsilon)\ d\mu_m\bigg|+
\\+\bigg|\int_Xf_\varepsilon\ d\mu_m-\int_Xf_\varepsilon\ d\mu\bigg|+\bigg|\int_X(f_\varepsilon-f)\ d\mu\bigg|
<\varepsilon,
\end{multline*}
which verifies the claim.
\end{proof}

Consider a Galois invariant effective $d$-cycle $Z$ of~$X_{\ol{K}}$.
By grouping together Galois orbits of subvarieties of~$\ol{K}$, $Z$ can be seen as a cycle of~$X$, see~\cite[A.4.13]{Bombieri-Gubler:2006}.
This allows to consider, if $v\in\mathfrak{M}$ and $\ol{L}_v$ is a semipositive $v$-adic metrized line bundle on~$X$, the measure
\[
\c_1(\ol{L})^{\wedge d}\wedge\delta_{Z_v^{\an}}
\]
on $X_v^{\an}$ defined in~\eqref{eq: Chambert-Loir measure}.
It is positive and of total mass $\deg_L(Z)$ by \hyperref[prop:measure-properties]{Proposition~\ref*{prop:measure-properties}}, and it is independent on the choice of the embedding $K_v\hookrightarrow\ol{K}_v$ thanks to the Galois invariancy.

We are ready to prove the main result of the paper, that is an equidistribution theorem for small and generic nets of Galois invariant effective cycles of~$X_{\ol{K}}$.
Its proof is inspired by the classical strategy and involves suitable perturbations of metrized line bundles, as well as the \hyperref[thm:key-lemma]{Key inequality} of \hyperref[section key inequality]{Section \ref*{section key inequality}}.

\begin{theorem}[equidistribution of effective cycles]\label{equidistribution theorem}
Let $X$ be a projective variety over~$K$, and $\ol{L}$ be a big and semiample line bundle on $X$ equipped with a quasi-algebraic semipositive metric.
Let also $d=0,\dots,\dim(X)$ and assume that $\e_1^{(d)}(X,\ol{L})=\widehat{\h}_{\overline{L}}(X)$.
Then, for every $L$-generic and $\overline{L}$-small net $(Z_m)_{m\in\mathcal{J}}$ of Galois invariant effective cycles of $X_{\ol{K}}$ of pure dimension $d$, the weak convergence of probability measures on~$X_v^{\an}$
\[
\frac{1}{\deg_L(Z_m)}\ \c_1(\overline{L}_v)^{\wedge d}\wedge\delta_{Z_{m,v}^{\an}}\longrightarrow \frac{1}{\deg_L(X)}\ \c_1(\overline{L}_v)^{\wedge\dim(X)}
\]
holds for every $v\in\mathfrak{M}$.
\end{theorem}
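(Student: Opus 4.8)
The plan is to run the classical variational principle of Szpiro--Ullmo--Zhang and Yuan, adapted to effective cycles and to the present weaker hypothesis by replacing the usual semipositivity arguments with the Key inequality (Theorem~\ref{thm:key-lemma}) and the notion of $\ol L$-smallness. First I would reduce to a local statement: fix $v_0\in\fM$ and set $\mu_m:=\deg_L(Z_m)^{-1}\,\c_1(\ol L_{v_0})^{\wedge d}\wedge\delta_{Z_{m,v_0}^{\an}}$ and $\mu:=\deg_L(X)^{-1}\,\c_1(\ol L_{v_0})^{\wedge\dim(X)}$. By Proposition~\ref{prop:measure-properties}, the effectiveness of $Z_m$ and the fact that $\deg_L(Z_m)>0$ eventually (by $L$-genericity), these are Borel probability measures on the compact space $X_{v_0}^{\an}$. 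By Weyl's criterion (Proposition~\ref{Weyl criterion}) and Theorem~\ref{density of brick functions} it suffices to show, for every $v_0$-adic elementary function $f$, that $\int_{X_{v_0}^{\an}}f\,d\mu_m\to\int_{X_{v_0}^{\an}}f\,d\mu$; and since $-f$ is again elementary, it is enough to prove the one-sided bound $\liminf_m\int_{X_{v_0}^{\an}}f\,d\mu_m\ge\int_{X_{v_0}^{\an}}f\,d\mu$ for all such $f$.

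Next I would fix a small $t\in\QQ_{>0}$ and pass, via Remark~\ref{rmk:elementary functions give perturbation after finite field extension} and Lemma~\ref{perturbation of DSP qa is DSP qa after a finite field extension}, to a finite extension $K'/K$ and a place $w_0\mid v_0$ over which $f$ yields the DSP quasi-algebraic perturbation $\ol L(w_0,f,t)$; since heights, degrees, the correcting integrals $I$, the essential minimum, $L$-genericity and $\ol L$-smallness are unchanged by finite base change, and since Lemma~\ref{correcting integrals for perturbed metrics} and Lemma~\ref{lemma about perturbed height} reduce the $w_0$-integrals to $v_0$-integrals, I may work over $K'$. Applying Theorem~\ref{thm:key-lemma} to $\ol L$ and to $\ol{\mathscr{O}_X}:=(\mathscr{O}_X,(\norm{\cdot}_v'))$ with $-\log\norm{1}_{w_0}'=f$ and trivial metric elsewhere gives, for $n$ large, a nonzero $s\in\mathrm{H}^0(X_{\ol K},L_{\ol K}^{\otimes n})$ with $\sum_v n_v\sup\log(\norm{s}^{\otimes n}_v\norm{1}_v^{\prime\otimes t})\le n(-\widehat{\h}_{\ol L(w_0,f,t)}(X)+O(t^2))$, the implicit constant independent of $n$. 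Fixing one such $n$ and putting $H:=|\div(s)|\cup H_0$ with $H_0$ the codimension-one subset of Proposition~\ref{prop:generic-big}, I would then deduce, for every $d$-dimensional subvariety $Y\nsubseteq H$ (so that $\deg_L(Y)>0$),
\[
\sup_{s'}I_{\ol L}(Y,s')\ \ge\ I_{\ol L}(Y,s)\ \ge\ \deg_L(Y)\bigl(\widehat{\h}_{\ol L(w_0,f,t)}(X)-O(t^2)\bigr)-n_{w_0}\,t\int_{X_{v_0}^{\an}}f\ \c_1(\ol L_{v_0})^{\wedge d}\wedge\delta_{Y_{v_0}^{\an}},
\]
by combining the definition of $I_{\ol L}$, the positivity and total mass $\deg_L(Y)$ of $\c_1(\ol L_{v_0})^{\wedge d}\wedge\delta_{Y_{v_0}^{\an}}$ (Proposition~\ref{prop:measure-properties}, using that $\ol L$ is semipositive), the displayed norm bound, and the pointwise comparison of the $\ol L$- and $\ol L(w_0,f,t)$-norms of $s$.

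Summing this over the summands $Z_m=\sum_i a_{m,i}Y_{m,i}$ of~\eqref{cycles decomposition} — the previous bound for the $Y_{m,i}\nsubseteq H$ and Lemma~\ref{lemma about lower absolute constant} (where semiampleness of $L$ is essential), $\sup_{s'}I_{\ol L}(Y_{m,i},s')\ge C_{\ol L,\ol K}\deg_L(Y_{m,i})$, for the $Y_{m,i}\subseteq H$ — then dividing by $\deg_L(Z_m)$ and invoking $L$-genericity (so the total $L$-degree of the summands inside $H$ is negligible and $\deg_L(Z_m)^{-1}\sum_{Y_{m,i}\nsubseteq H}a_{m,i}\int f\,\c_1(\ol L_{v_0})^{\wedge d}\wedge\delta_{Y_{m,i,v_0}^{\an}}=\int f\,d\mu_m+o(1)$), I would obtain
\[
\frac{1}{\deg_L(Z_m)}\sum_i a_{m,i}\sup_{s'}I_{\ol L}(Y_{m,i},s')\ \ge\ \widehat{\h}_{\ol L(w_0,f,t)}(X)-O(t^2)-n_{w_0}\,t\int_{X_{v_0}^{\an}}f\,d\mu_m+o(1).
\]
By Definition~\ref{definition smallness} the left-hand side converges to $\e_1^{(d)}(X,\ol L)=\widehat{\h}_{\ol L}(X)$, so taking $\limsup_m$ of the right-hand side (legitimate since the left-hand side converges) and inserting $\widehat{\h}_{\ol L(w_0,f,t)}(X)=\widehat{\h}_{\ol L}(X)+n_{w_0}t\int_{X_{v_0}^{\an}}f\,d\mu+O(t^2)$ (Lemma~\ref{lemma about perturbed height} with $Z=X$, noting $\deg_{L(w_0,f,t)}=\deg_L$) gives $0\ge n_{w_0}t\bigl(\int_{X_{v_0}^{\an}}f\,d\mu-\liminf_m\int_{X_{v_0}^{\an}}f\,d\mu_m\bigr)+O(t^2)$; dividing by $n_{w_0}t>0$ and letting $t\to 0^+$ yields $\liminf_m\int_{X_{v_0}^{\an}}f\,d\mu_m\ge\int_{X_{v_0}^{\an}}f\,d\mu$, which is what was needed.

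The main obstacle is precisely what forces this indirect route: the perturbed bundle $\ol L(w_0,f,t)$ is only DSP, so its Chambert--Loir measures are signed and one cannot lower-bound heights or correcting integrals for it directly from positivity; everything must instead be funneled through $I_{\ol L}$ for the original semipositive $\ol L$, the perturbation being absorbed via the pointwise norm comparison and via Lemmas~\ref{correcting integrals for perturbed metrics} and~\ref{lemma about perturbed height}. Alongside this, one must keep the $O(t^2)$ error of the Key inequality uniform in $n$ so that it survives the division by $n$, be careful with the sign of $t$ when converting $\liminf$ to $\limsup$ in the variational step, and handle the summands of $Z_m$ contained in $H$, which is exactly the role of the absolute lower bound of Lemma~\ref{lemma about lower absolute constant}.
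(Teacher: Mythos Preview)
Your proposal is correct and follows essentially the same approach as the paper's proof: reduce via Weyl's criterion and Theorem~\ref{density of brick functions} to elementary $f$, perturb over a finite extension, invoke the Key inequality (Theorem~\ref{thm:key-lemma}) to produce a section with controlled sup-norm, lower-bound $\sup_{s'}I_{\ol L}(Y_{m,i},s')$ for the good summands via this section and Lemma~\ref{correcting integrals for perturbed metrics}, use Lemma~\ref{lemma about lower absolute constant} for the bad ones, and conclude via $L$-genericity, $\ol L$-smallness, and Lemma~\ref{lemma about perturbed height}.

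Two small presentational differences are worth noting. First, you exploit the symmetry $f\mapsto -f$ (elementary functions form a $\QQ$-vector space) to reduce to a single one-sided inequality with $t\to 0^+$, whereas the paper carries out both $t\to 0^+$ and $t\to 0^-$ explicitly; your route is slightly cleaner. Second, you work throughout with the decomposition of $Z_m$ into subvarieties of $X_{\ol K}$ and invoke Definition~\ref{definition smallness} directly, while the paper first rewrites $Z_m=\sum_i a_{m,i}Y_{m,i}$ with each $Y_{m,i}$ a subvariety of $X$ (using Galois invariance) and passes through Remark~\ref{rmk:essential minimum and small net of Galois invariant cycle} to obtain only the inequality $\e_1^{(d)}(X,\ol L)\ge\limsup_m(\cdots)$. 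Your version is fine, but you are silently using that for $Y$ defined over $\ol K$ the individual measure $\c_1(\ol L_{v_0})^{\wedge d}\wedge\delta_{Y_{v_0}^{\an}}$ depends on a choice of embedding $\ol K\hookrightarrow\CC_{v_0}$, and only the Galois-invariant sum over all summands of $Z_m$ is intrinsic; it would be worth saying this explicitly (or, as the paper does, grouping by Galois orbits from the outset).
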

\begin{proof}

%
%

Let $(Z_m)_m$ be a $L$-generic and $\overline{L}$-small net of Galois invariant effective $d$-cycles of $X_{\ol{K}}$ of pure dimension~$d$.
By Galois invariancy, we can write each member of the net as
\[
Z_m=\sum_ia_{m,i}\,Y_{m,i},
\]
with every $Y_{m,i}$ being a subvariety of~$X$.
By definition of $L$-genericity, up to considering a queue of the net, we can assume that $\deg_L(Z_m)>0$ for all $m\in\mathcal{J}$.
Moreover, for every $v\in\mathfrak{M}$, the probability measure
\begin{equation}\label{measure in the proof of equidistribution}
\frac{1}{\deg_L(Z_m)}\ c_1(\overline{L}_v)^{\wedge d}\wedge\delta_{Z_{m,v}^{\an}}
\end{equation}
is not affected by removing from the cycle $Z_m$ the subvarieties $Y_{m,i}$ whose degree with respect to $L$ vanishes, because of \hyperref[prop:measure-properties]{Proposition \ref*{prop:measure-properties} (1)}.
Hence, we can also assume that $\deg_L(Y_{m,i})>0$ for all $i$ and $m\in\mathcal{J}$.
Therefore, the measure \eqref{measure in the proof of equidistribution} can be written as
\[
\sum_i\frac{a_{m,i}\;\deg_L(Y_{m,i})}{\deg_L(Z_m)}\frac{1}{\deg_L(Y_{m,i})}\ \c_1(\ol{L}_v)^{\wedge d}\wedge\delta_{Y_{m,i,v}^{\an}}.
\]

Fix a place~$v_0\in\mathfrak{M}$.
Since $X$ is proper, the analytic space $X_{v_0}^{\an}$ is compact by~\cite[Theorem~3.4.8 (ii)]{Berkovich:1990}. 
So, because of \hyperref[density of brick functions]{Theorem \ref*{density of brick functions}} and \hyperref[Weyl criterion]{Proposition~\ref*{Weyl criterion}}, we can reduce to prove the convergence \eqref{convergence of integrals} for $v_0$-adic elementary functions.

%
%

From now on, let $f$ be a $v_0$-adic elementary function, and consider $t\in\mathbb{Q}$ sufficiently close to~$0$.
By \hyperref[perturbation of DSP qa is DSP qa after a finite field extension]{Lemma~\ref*{perturbation of DSP qa is DSP qa after a finite field extension}}, there exists a finite field extension $K^\prime$ of $K$ and a place $w_0$ of $K^\prime$ dividing $v_0$ such that $\overline{L}(w_0,f,t)$ is a DSP quasi-algebraic metrized line bundle defined over~$K'$.
The key inequality of \hyperref[thm:key-lemma]{Theorem~\ref*{thm:key-lemma}} asserts that there exists $n\in\mathbb{N}$ and a global section $s_t\in H^0(X_{\ol{K}},L^{\otimes n}_{\ol{K}})$ for which
\begin{equation}\label{defining property of the supersmall section in the proof of equidistribution}
\sum_{w\in\mathfrak{M}_{K^\prime}}n_w\sup\log\|s_t\|_{\ol{L}(w_0,f,t),w}\leq n\Big(-\widehat{\h}_{\ol{L}(w_0,f,t)}(X)+O(t^2)\Big).
\end{equation}

To simplify the notation in the remaining of the proof, set, for every subvariety $Y$ of~$X$,
\[
\mathcal{H}(Y):=\big\{s\in H^0(X_{\ol{K}},L_{\ol{K}}^{\otimes n})\mid\, n\in\mathbb{N}\setminus\{0\}\text{ and } s\text{ intersects }Y\text{ properly}\big\}.
\] 

By \hyperref[rmk:essential minimum and small net of Galois invariant cycle]{Remark~\ref*{rmk:essential minimum and small net of Galois invariant cycle}}, the $\ol{L}$-smallness and the effectiveness of the net $(Z_m)_m$ ensure that
\begin{multline}\label{starting inequality in the proof of equidistribution}
\e_1^{(d)}(X,\ol{L})\geq
\limsup_{m}\frac{1}{\deg_L(Z_m)}\sum_ia_{m,i}\sup_{s\in\mathcal{H}(Y_{m,i})}I_{\ol{L}}(Y_{m,i},s)\\
=
\limsup_{m}\Bigg(\sum_{\substack{i\\s_t\in\mathcal{H}(Y_{m,i})}}\frac{a_{m,i}\deg_L(Y_{m,i})}{\deg_L(Z_m)}\sup_{s\in\mathcal{H}(Y_{m,i})}\frac{I_{\ol{L}}(Y_{m,i},s)}{\deg_L(Y_{m,i})}
\\
+\sum_{\substack{i\\s_t\notin\mathcal{H}(Y_{m,i})}}\frac{a_{m,i}\deg_L(Y_{m,i})}{\deg_L(Z_m)}\sup_{s\in\mathcal{H}(Y_{m,i})}\frac{I_{\ol{L}}(Y_{m,i},s)}{\deg_L(Y_{m,i})}\Bigg).
\end{multline}

%
%

Observe that, if $s_t\in\mathcal{H}(Y_{m,i})$ then it intersects $Y_{m,i}$ properly, so a combination of the definition of correcting integral, \hyperref[correcting integrals for perturbed metrics]{Lemma \ref*{correcting integrals for perturbed metrics}}, \eqref{defining property of the supersmall section in the proof of equidistribution} and \hyperref[prop:measure-properties]{Proposition~\ref*{prop:measure-properties}(1)} yields
\begin{multline}\label{supersmall section in the proof of equidistribution}
\sup_{s\in\mathcal{H}(Y_{m,i})}\frac{I_{\ol{L}}(Y_{m,i},s)}{\deg_L(Y_{m,i})}\geq\frac{I_{\ol{L}}(Y_{m,i},s_t)}{\deg_L(Y_{m,i})}
\\
\geq \big(\,\widehat{\h}_{\ol{L}(w_0,f,t)}(X)-O(t^2)\big)
-\frac{n_{w_0}t}{\deg_L(Y_{m,i})}\int_{X_{v_0}^{\an}}f\ \c_1(\ol{L}_{v_0})^{\wedge d}\wedge\delta_{Y_{m,i,v_0}^{\an}}.
\end{multline}

%
%

To control the second summand in~\eqref{starting inequality in the proof of equidistribution}, since $L$ is semiample, \hyperref[lemma about lower absolute constant]{Lemma \ref*{lemma about lower absolute constant}} ensures that there exists a real constant $C_{\ol{L}}$ for which, for all the $d$-dimensional subvarieties $Y_{m,i}$ of~$X$,
\begin{multline}\label{lower bound in the proof of equidistribution}
\sup_{s\in\mathcal{H}(Y_{m,i})}\frac{I_{\ol{L}}(Y_{m,i},s)}{\deg_L(Y_{m,i})}\geq C_{\ol{L}}
\\
\geq\big(C_{\ol{L}}+n_{w_0}t\cdot\min f\big)-\frac{n_{w_0}t}{\deg_L(Y_{m,i})}\int_{X_{v_0}^{\an}}f\ \c_1(\ol{L}_{v_0})^{\wedge d}\wedge\delta_{Y_{m,i,v_0}^{\an}},
\end{multline}
where the second inequality comes from the fact that the involved measure is positive and of total mass~$\deg_L(Y_{m,i})$.

%
%

Plugging inequalities \eqref{supersmall section in the proof of equidistribution} and \eqref{lower bound in the proof of equidistribution} in \eqref{starting inequality in the proof of equidistribution} and using the fact that the coefficients $a_{m,i}$ are nonnegative, one has after reordering the terms
\begin{multline*}
\e_1^{(d)}(X,\ol{L})
\geq
\limsup_{m}\Bigg(\frac{-n_{w_0}t}{\deg_L(Z_m)}\sum_{\substack{i\\\text{}}}a_{m,i}\int_{X_{v_0}^{\an}}f\ \c_1(\ol{L}_{v_0})^{\wedge d}\wedge\delta_{Y_{m,i,v_0}^{\an}}
\\
+\big(\widehat{\h}_{\ol{L}(w_0,f,t)}(X)-O(t^2)\big)\sum_{\substack{i\\s_t\in\mathcal{H}(Y_{m,i})}}\frac{a_{m,i}\deg_L(Y_{m,i})}{\deg_L(Z_m)}
\\
+\big(C_{\ol{L}}+n_{w_0}t\cdot\min f\big)\sum_{\substack{i\\s_t\notin\mathcal{H}(Y_{m,i})}}\frac{a_{m,i}\deg_L(Y_{m,i})}{\deg_L(Z_m)}\Bigg).
\end{multline*}

%
%

Consider the union of the Galois conjugates of~$|\div(s_t)|$.
It is a closed subset of $X$ of codimension~$1$; by construction, $s_t$ intersects $Y_{m,i}$ properly if $Y_{m,i}$ is not contained in it.
Then, the definition of $L$-genericity implies that the second and third summands in the right hand side of the above inequality admit a limit with respect to~$m$.
These limits are, respectively, $\widehat{\h}_{\ol{L}(w_0,f,t)}(X)-O(t^2)$ and~$0$.
Therefore we obtain
\begin{equation}\label{final inequality for essmin in the proof of equidistribution}
\e_1^{(d)}(X,\ol{L})\geq
\limsup_{m}\Bigg(\frac{-n_{w_0}t}{\deg_L(Z_m)}\int_{X_{v_0}^{\an}}f\ \c_1(\ol{L}_{v_0})^{\wedge d}\wedge\delta_{Z_{m,v_0}^{\an}}\Bigg)
+\widehat{\h}_{\ol{L}(w_0,f,t)}(X)-O(t^2).
\end{equation}

%
%

The hypothesis $\e_1^{(d)}(X,\ol{L})=\widehat{\h}_{\overline{L}}(X)$ and an application of \hyperref[lemma about perturbed height]{Lemma \ref*{lemma about perturbed height}} yield
\begin{equation}\label{equality for the essmin in the proof of equidistribution}
\e_1^{(d)}(X,\ol{L})=\widehat{\h}_{\overline{L}(w_0,f,t)}(X)-\frac{n_{w_0}t}{\deg_L(X)}\int_{X^{\an}_{v_0}}f\ c_1(\overline{L}_{v_0})^{\wedge\dim(X)}+O(t^2).
\end{equation}

It suffices to combine \eqref{final inequality for essmin in the proof of equidistribution} and \eqref{equality for the essmin in the proof of equidistribution} and simplify the terms (the weight $n_{w_0}$ is a positive real number) to obtain that
\begin{multline*}
\liminf_m\frac{t}{\deg_L(Z_m)}\int_{X_{v_0}^{\an}}f\ \c_1(\ol{L}_{v_0})^d\wedge\delta_{Z_{m,v_0}^{\an}}+O(t^2)
\\
\geq\frac{t}{\deg_L(X)}\int_{X^{\an}_{v_0}}f\ \c_1(\overline{L}_{v_0})^{\wedge\dim(X)}+O(t^2).
\end{multline*}

The previous inequality holds for every rational $t$ sufficiently small in absolute value.
In particular, for $t\longrightarrow0^+$ one has
\[
\liminf_m\frac{1}{\deg_L(Z_m)}\int_{X_{v_0}^{\an}}f\ \c_1(\ol{L}_{v_0})^d\wedge\delta_{Z_{m,v_0}^{\an}}\geq\frac{1}{\deg_L(X)}\int_{X^{\an}_{v_0}}f\ \c_1(\overline{L}_{v_0})^{\wedge\dim(X)},
\]
while for $t\longrightarrow0^-$
\[
\limsup_m\frac{1}{\deg_L(Z_m)}\int_{X_{v_0}^{\an}}f\ \c_1(\ol{L}_{v_0})^d\wedge\delta_{Z_{m,v_0}^{\an}}\leq\frac{1}{\deg_L(X)}\int_{X^{\an}_{v_0}}f\ \c_1(\overline{L}_{v_0})^{\wedge\dim(X)}.
\]
Comparing the two inequalities, one deduces that the net converges and that the limit coincides with the claimed one.

\end{proof}

As a special case, we obtain \hyperref[introthm:equidistribution]{Theorem~\ref*{introthm:equidistribution}} in the introduction.

\begin{proof}[Proof of Theorem~\ref*{introthm:equidistribution}]
Let $(Y_m)_m$ be a generic and $\ol{L}$-small net of subvarieties of $X_{\ol{K}}$ of dimension~$d$.
The net of Galois cycles of $Y_m$ is a generic and $\ol{L}$-small net of Galois invariant effective $d$-cycles of $X_{\ol{K}}$ because of \hyperref[rmk:genericity for Galois cycle]{Remark~\ref*{rmk:genericity for Galois cycle}} and \hyperref[rmk:smallness for Galois cycle]{Remark~\ref*{rmk:smallness for Galois cycle}}.
Therefore, the claim follows readily from \hyperref[equidistribution theorem]{Theorem~\ref*{equidistribution theorem}}.
\end{proof}

Another consequence is an equidistribution statement over smaller Berkovich spaces.

\begin{remark}
Let $(Z_m)_m$ be a $L$-generic and $\ol{L}$-small net of effective $d$-cycles of $X$.
By considering push-forwards by any of the natural maps $X_v^{\an}\to X_{K_v}^{\an}$, \hyperref[equidistribution theorem]{Theorem~\ref*{equidistribution theorem}} implies the weak convergence of the associated probability measures on~$X_{K_v}^{\an}$.
This is the higher dimensional version of~\cite[Theorem~3.2]{Yuan:2008}.
\end{remark}

We conclude the section with a question regarding a stronger (mixed) version of the equidistribution theorem for higher dimensional cycles.

\begin{question}
Let $d=0,\dots,\dim(X)$, and $\ol{L}$ be an ample line bundle on $X$ equipped with a semipositive quasi-algebraic metric.
Consider $(Z_m)_m$ an $L$-generic and $\ol{L}$-small net of Galois invariant effective cycles of $X_{\ol{K}}$ of pure dimension~$d$.

Is it true that for any choice of semipositive quasi-algebraic metrized ample line bundles $\ol{L}_1,\dots,\ol{L}_d$ the weak convergence of probability measures on $X_v^{\an}$
\begin{multline*}
\frac{1}{\deg_{L_1,\dots,L_d}(Y_m)}\ \c_1(\overline{L}_{1,v})\wedge\ldots\wedge\c_1(\ol{L}_{d,v})\wedge \delta_{Z_{m,v}^{\an}}
\longrightarrow
\\\frac{1}{\deg_{L_1,\dots,L_d,L,\dots,L}(X)}\ \c_1(\overline{L}_{1,v})\wedge\ldots\wedge\c_1(\ol{L}_{d,v})\wedge\c_1(\ol{L})^{\wedge (\dim(X)-d)}
\end{multline*}
 holds for every place~$v$?
\end{question}

%
%
%
%
%
%
%
%
%
%
%
%
%
%
%
%
%
%
%
%
%
%
%
%
%
%
%
%
%
%
%
%
%
%
%
%
%
%

\section{Further comments}\label{section 5}

Throughout this section let $X$ be a projective variety defined over~$K$, and $\ol{L}$ a semipositive quasi-algebraic metrized line bundle on~$X$.

\subsection{Comparing with the literature}\label{subsection comparison}
Of course, Theorem~\ref{equidistribution theorem} is equivalent, when studying the equidistribution of small points, to the classical results in this area, and does not convey any new information, as the integral appearing in the definition of smallness is just the height of points.
The fundamental divisive element is the treatment of higher dimensional subvarieties.

In this subsection we compare Theorem~\ref{equidistribution theorem} to other equidistribution theorems for positive dimensional varieties present in literature.
In particular, we refer to the work of Autissier~\cite{Autissier:2006}, Yuan~\cite{Yuan:2008} and, implicitly, Gubler~\cite{Gubler:2008}.

The main difference between the equidistribution in Theorem~\ref{equidistribution theorem} and the aforementioned ones is in the hypothesis of the statements.
Precisely, the notion of a small sequence of $d$-dimensional subvarieties $(Y_m)_m$ of $X$ in~\cite{Autissier:2006,Yuan:2008,Gubler:2008} is defined using the convergence of their normalized heights; that is,
\begin{equation}\label{eq:alternative-small}
\lim_{m\rightarrow \infty}\widehat{\h}_{\ol{L}}(Y_m)\;=\;\widehat{\h}_{\ol{L}}(X).
\end{equation}
Compare this to Definition~\ref{definition smallness}, even under the assumption that $\e_1^{(d)}(X,\ol{L})=\h_{\ol{L}}(X)$.
This ``seemingly'' simplification of the notion of \emph{small} comes however with the following required extra hypothesis on the metrized line bundle so that the respective equidistribution theorems in \emph{loc.~cit.} hold.
\begin{assumption}\label{eq:condition}
Fixed $d>0$,  
\begin{equation*}
\widehat{\h}_{\ol{L}}(Y)\;\geq\;\widehat{\h}_{\ol{L}}(X),
\end{equation*}
for every subvariety $Y$ of $X_{\ol{K}}$ of dimension $(d-1)$.
\end{assumption}

Notice that this assumption implies also that every effective $(d-1)$-cycle $Z$ of $X_{\ol{K}}$ satisfies the same inequality.
This hypothesis is verified in a large number of cases which are of intrinsic interest. For instance, canonical metrics of toric line bundles, N\'{e}ron-Tate height of abelian varieties, and, more generally, canonical heights associated to dynamical systems, see~Subsection~5.B.
Nevertheless, we can easily produce an example where Assumption~\ref{eq:condition} does not hold.

\begin{example}\label{ex:failure of hypothesis *}
Let $X=\PP^n_\QQ$ be the projective space over $\QQ$, and $L=\mathscr{O}(1)$ together with the Fubini-Study metric.
For a point $p=(p_0:\cdots:p_n)\in\PP^n(\ol{\QQ})$ and a regular section $s$ of $\mathscr{O}(1)$, which we identify with a homogeneous lineal polynomial~$f$,
these metrics are given by
\[
\norm{s(p)}_v=\left\lbrace
\begin{array}{ll}
\frac{\abs{f(p_0,\ldots,p_n)}_v}{(\sum_{i}\abs{p_i}_v^2)^{1/2}},
&\mbox{if }v\mid \infty;\vspace{4mm}\\
\frac{\abs{f(p_0,\ldots,p_n)}_v}{\max_{i}(\abs{p_i}_v)},
&\mbox{if }v\nmid\infty.
\end{array}\right.
\]
For short, we denote this metrized line bundle by $\ol{\mathscr{O}(1)}^{\mathrm{FS}}$.

One can compute explicitly the height of $X$ with respect to this metric
\[
\h_{\ol{\scrO(1)}^{\mathrm{FS}}}(\PP_\QQ^n)= \frac{n+1}{2}\sum_{j=2}^{n+1}\frac{1}{j},
\]
see for instance~\cite[Lemma~3.3.1]{Bost-Gillet-Soule}. 

The minimal height of points with respect to $\ol{\mathscr{O}(1)}^{\mathrm{FS}}$ is $0$ (\cite[Théorème~0.1]{Sombra:msvtp}) which already contradicts Assumption~\ref{eq:condition} for $d=1$.
Further immediate examples can be given, when $n=3$. Let $\bfzeta=(\zeta_1,\zeta_2,\zeta_3)\in\Gm^3(\ol{\QQ})$ be a torsion point, and denote by $C_{\bfzeta}\subseteq\PP^3_{\ol{\QQ}}$ the translate of the Veronese curve of degree~$3$ by~$\bfzeta$, that is the closure of the image of the morphism
\[
\Gm\longrightarrow\PP^3,\quad t\longmapsto (1:\zeta_1 t:\zeta_2 t^2:\zeta_3 t^3).
\]
Then \cite[Corollary~7.1.6]{Burgos-Philippon-Sombra:2014} and the invariance of the Fubini-Study metric under torsion translates give
\[
\h_{\ol{\scrO(1)}^{\mathrm{FS}}}(C_{\bfzeta})=\frac{3}{2}+\pi\bigg(1-\frac{2}{4}\bigg)\cot\bigg(\frac{\pi}{4}\bigg).
\]
Hence this also contradicts Assumption~\ref{eq:condition} for $d=2$, as
\[
\widehat{\h}_{\ol{\scrO(1)}^{\mathrm{FS}}}(C_{\bfzeta})=
\frac{\h_{\ol{\scrO(1)}^{\mathrm{FS}}}(C_{\bfzeta})}{2\deg(C_{\bfzeta})}=\frac{1}{4}+\frac{\pi}{12}
<
\frac{13}{24}=\frac{\h_{\ol{\scrO(1)}^{\mathrm{FS}}}(\PP^3)}{4}
=\widehat{\h}_{\ol{\scrO(1)}^{\mathrm{FS}}}(\PP^3).
\]
\end{example}

\vspace{\baselineskip}

Even if Assumption~\ref{eq:condition} does not hold in general, we restrict to its setting to compare both notions of smallness, meaning Definition~\ref{definition smallness} and~\eqref{eq:alternative-small}.
To do so, we first give the following lemma, which may be related to Corollary~\ref{cor:Zhangs-ineq} and serves already as a first comparison point between both contexts.
For simplicity, we assume also that $L$ is ample, although this hypothesis may be omitted by a careful use of genericity (using Proposition~\ref{prop:generic-big}).
\begin{lemma}\label{lemma:Zhang-normalized-height}
Let $\ol{L}$ be a semipositive metrized ample line bundle satisfying Assumption~\ref{eq:condition} for a fixed~$d=1,\ldots,\dim(X)$. Then
\begin{equation}\label{eq:Zhang-normalized-height}
\sup_{\substack{H\text{ closed subset of }X_{\ol{K}}\\ \codim(H)=1}}\;\inf_{\substack{Y\subseteq X_{\ol{K}}\\ \dim(Y)=d\\ Y\nsubseteq H}}\widehat{\h}_{\ol{L}}(Y)\geq \widehat{\h}_{\ol{L}}(X).
\end{equation}
\end{lemma}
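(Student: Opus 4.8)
The plan is to relate the left-hand side of~\eqref{eq:Zhang-normalized-height} to the $d$-dimensional essential minimum and then apply Zhang's inequality. Since $L$ is ample, every subvariety of $X_{\ol{K}}$ has strictly positive degree with respect to $L$ (Nakai--Moishezon), so in \hyperref[prop:equivalent-def-ess-minimum]{Proposition~\ref*{prop:equivalent-def-ess-minimum}} one may take $H_0=\emptyset$ and write
\[
\e_1^{(d)}(X,\ol{L})=\sup_{\substack{H\subseteq X_{\ol{K}}\\\codim(H)=1}}\ \inf_{\substack{Y\subseteq X_{\ol{K}}\\\dim(Y)=d\\ Y\nsubseteq H}}\ \sup_{\substack{s\in\mathrm{H}^0(X_{\ol{K}},L_{\ol{K}}^{\otimes n})\\ n\in\NN\setminus\{0\}\\ Y\nsubseteq|\div(s)|}}\ \frac{I_{\ol{L}}(Y,s)}{\deg_L(Y)}.
\]
The core of the argument is an upper bound, valid under Assumption~\ref{eq:condition}, for the innermost quantity in terms of $\widehat{\h}_{\ol{L}}(Y)$ and $\widehat{\h}_{\ol{L}}(X)$.

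To establish this bound, I would fix a $d$-dimensional subvariety $Y$ of $X_{\ol{K}}$ and a global section $s$ of $L_{\ol{K}}^{\otimes n}$ with $Y\nsubseteq|\div(s)|$, and use \hyperref[correcting integrals as height jumps]{Remark~\ref*{correcting integrals as height jumps}} to write
\[
I_{\ol{L}}(Y,s)=\h_{\ol{L}}(Y)-\frac{\h_{\ol{L}}(\div(s)\cdot Y)}{n}.
\]
As $s$ is a global section, $\div(s)\cdot Y$ is an \emph{effective} $(d-1)$-cycle, and by multilinearity of the degree $\deg_L(\div(s)\cdot Y)=n\deg_L(Y)$. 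The extension of Assumption~\ref{eq:condition} to effective $(d-1)$-cycles (recorded right after its statement) then gives $\h_{\ol{L}}(\div(s)\cdot Y)\geq d\,\deg_L(\div(s)\cdot Y)\,\widehat{\h}_{\ol{L}}(X)=d\,n\,\deg_L(Y)\,\widehat{\h}_{\ol{L}}(X)$. Inserting this, together with $\h_{\ol{L}}(Y)=(d+1)\deg_L(Y)\,\widehat{\h}_{\ol{L}}(Y)$ from~\eqref{normalized height}, and dividing by $\deg_L(Y)>0$, I get
\[
\frac{I_{\ol{L}}(Y,s)}{\deg_L(Y)}\leq(d+1)\,\widehat{\h}_{\ol{L}}(Y)-d\,\widehat{\h}_{\ol{L}}(X),
\]
a bound uniform in $s$ (and the supremum over $s$ is over a nonempty set, again because $L$ is ample).

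Now I would take the supremum over $s$, then the infimum over $Y\nsubseteq H$, then the supremum over $H$ of codimension $1$, noting that $d\,\widehat{\h}_{\ol{L}}(X)$ is a constant, to obtain
\[
\e_1^{(d)}(X,\ol{L})\leq(d+1)\sup_{\substack{H\subseteq X_{\ol{K}}\\\codim(H)=1}}\ \inf_{\substack{Y\subseteq X_{\ol{K}}\\\dim(Y)=d\\ Y\nsubseteq H}}\widehat{\h}_{\ol{L}}(Y)-d\,\widehat{\h}_{\ol{L}}(X).
\]
By \hyperref[cor:Zhangs-ineq]{Corollary~\ref*{cor:Zhangs-ineq}} (Zhang's inequality) we have $\e_1^{(d)}(X,\ol{L})\geq\widehat{\h}_{\ol{L}}(X)$; substituting this on the left and rearranging yields $(d+1)\,\widehat{\h}_{\ol{L}}(X)\leq(d+1)\sup_H\inf_{Y\nsubseteq H}\widehat{\h}_{\ol{L}}(Y)$, and dividing by $d+1$ gives~\eqref{eq:Zhang-normalized-height}.

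The proof is mostly bookkeeping once the inputs are lined up; I do not expect a genuine obstacle. The points needing a little attention are the reduction to $H_0=\emptyset$ (licensed by ampleness), the passage from Assumption~\ref{eq:condition} to effective $(d-1)$-cycles (already in the text, and compatible with the effectivity of $\div(s)$ since $s$ is a genuine global section), and checking that the innermost supremum ranges over a nonempty family so that the inequality survives the three successive sup/inf operations. To drop the ampleness assumption, as remarked in the statement, one reinstates the set $H_0$ of \hyperref[prop:generic-big]{Proposition~\ref*{prop:generic-big}} throughout, replaces $Y\nsubseteq H$ by $Y\nsubseteq H\cup H_0$, and uses this to guarantee $\deg_L(Y)>0$; the remaining estimates are unchanged.
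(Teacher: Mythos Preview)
Your proof is correct and follows essentially the same approach as the paper's: both combine \hyperref[cor:Zhangs-ineq]{Corollary~\ref*{cor:Zhangs-ineq}}, \hyperref[prop:equivalent-def-ess-minimum]{Proposition~\ref*{prop:equivalent-def-ess-minimum}}, \hyperref[correcting integrals as height jumps]{Remark~\ref*{correcting integrals as height jumps}}, and Assumption~\ref{eq:condition} to bound $\sup_s I_{\ol{L}}(Y,s)/\deg_L(Y)$ by $(d+1)\widehat{\h}_{\ol{L}}(Y)-d\,\widehat{\h}_{\ol{L}}(X)$, then rearrange. Your version is slightly more explicit about the effectivity of $\div(s)\cdot Y$, the degree computation $\deg_L(\div(s)\cdot Y)=n\deg_L(Y)$, and the reduction $H_0=\emptyset$ under ampleness, but the logical skeleton is identical.
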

\begin{proof}
Combining Corollary~\ref{cor:Zhangs-ineq}, Proposition~\ref{prop:equivalent-def-ess-minimum}, Remark~\ref{correcting integrals as height jumps} and Assumption~\ref{eq:condition}, we have that
\begin{multline*}
\widehat{\h}_{\ol{L}}(X)\leq \sup_{\substack{H\text{\mbox{ closed subset of }}X_{\ol{K}}\\\codim(H)=1}}\;
\inf_{\substack{Y\subseteq X_{\ol{K}}\\ \dim(Y)=d\\ Y\nsubseteq H}}
\sup_{\substack{s\in\mathrm{H}^0(X_{\ol{K}},L_{\ol{K}}^{\otimes n})\\n\in\mathbb{N}\setminus\{0\}\\Y_m\nsubseteq|\div(s)|}}
\bigg((d+1)\widehat{\h}_{\ol{L}}(Y)-d\;\widehat{\h}_{\ol{L}}(\div(s)\cdot Y)\bigg)\\
\leq \sup_{\substack{H\text{\mbox{ closed subset of }}X_{\ol{K}}\\ \codim(H)=1}}\;\inf_{\substack{Y\subseteq X_{\ol{K}}\\ \dim(Y)=d\\ Y\nsubseteq H}}(d+1)\widehat{\h}_{\ol{L}}(Y)-d\;\widehat{\h}_{\ol{L}}(X),
\end{multline*}
from which we readily deduce the statement.
\end{proof}

Notice that this lemma and the notion of smallness in~\eqref{eq:alternative-small} motivate the definition of an alternative version of $d$-dimensional essential minimum as the value on the left-hand side of~\eqref{eq:Zhang-normalized-height}.
However, the inequality in this lemma does not hold in general. 
\begin{example}\label{ex:failure of Zhang ineq}
Following the same notation as in Example~\ref{ex:failure of hypothesis *},
the family $(C_{\bfzeta})_{\bfzeta}$, where $\bfzeta$ ranges over all torsion points in $\Gm^3(\ol{\QQ})$, is generic. 
Therefore
\[
\sup_{\substack{H\text{ closed subset of }\PP^3_{\ol{\QQ}}\\\codim(H)=1}}\;\inf_{\substack{Y\subseteq \PP^3_{\ol{\QQ}}\\ \dim(Y)=1\\ Y\nsubseteq H}}\widehat{\h}_{\ol{\scrO(1)}^{\mathrm{FS}}}(Y)
\leq \frac{1}{4}+\frac{\pi}{12}<\widehat{\h}_{\ol{\scrO(1)}^{\mathrm{FS}}}(\PP^3).
\]
\end{example}

To further display the difference between both notions of small generic sequences we present the following result.

\begin{proposition}\label{prop:autissier}
Let $L$ be an ample line bundle on~$X$.
Fix $d= 1,\ldots, \dim(X)$, and let $\ol{L}$ be $L$ together with a semipositive quasi-algebraic metric such that Assumption~\ref{eq:condition} is satisfied for~$d$.
Let $(Y_m)_m$ be a generic net of $d$-dimensional subvarieties of~$X_{\ol{K}}$.
If equation~\eqref{eq:alternative-small} is satisfied for~$(Y_m)_m$, then $(Y_m)_m$ is $\ol{L}$-small (as in Definition~\ref{definition smallness})
and moreover  $\e_1^{(d)}(X,\ol{L})=\widehat{\h}_{\ol{L}}(X)$.
\end{proposition}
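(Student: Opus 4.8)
The plan is to squeeze the quantity $\sup_s I_{\ol{L}}(Y_m,s)/\deg_L(Y_m)$ between $\widehat{\h}_{\ol{L}}(X)$ from above — using Assumption~\ref{eq:condition} together with the convergence \eqref{eq:alternative-small} — and $\e_1^{(d)}(X,\ol{L})$ from below — using Proposition~\ref{prop:inequality liminf-essential minimum} and Corollary~\ref{cor:Zhangs-ineq} — so that all the intervening quantities collapse to a single value.

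First I would record the elementary upper bound that Assumption~\ref{eq:condition} forces on the correcting integrals. Fix a $d$-dimensional subvariety $Y$ of $X_{\ol{K}}$; since $L$ is ample, $\deg_L(Y)>0$ by the Nakai--Moishezon criterion. For a global section $s\in\mathrm{H}^0(X_{\ol{K}},L_{\ol{K}}^{\otimes n})$ with $Y\nsubseteq|\div(s)|$, the cycle $\div(s)\cdot Y$ is an \emph{effective} $(d-1)$-cycle (as $s$ is a global section) of $L$-degree $n\deg_L(Y)$, so the extension of Assumption~\ref{eq:condition} to effective $(d-1)$-cycles — noted right after its statement, and obtained by linearity from \eqref{normalized height} — gives $\h_{\ol{L}}(\div(s)\cdot Y)\ge d\,\widehat{\h}_{\ol{L}}(X)\,n\,\deg_L(Y)$. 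Combining this with Remark~\ref{correcting integrals as height jumps} and the definition \eqref{normalized height} of the normalized height yields
\[
\frac{I_{\ol{L}}(Y,s)}{\deg_L(Y)}=(d+1)\,\widehat{\h}_{\ol{L}}(Y)-\frac{\h_{\ol{L}}(\div(s)\cdot Y)}{n\,\deg_L(Y)}\ \le\ (d+1)\,\widehat{\h}_{\ol{L}}(Y)-d\,\widehat{\h}_{\ol{L}}(X),
\]
and taking the supremum over all such $s$ preserves this bound (if no admissible $s$ exists the supremum is $-\infty$ and the bound is trivial).

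Next I would apply this bound along the net and pass to the limit. Using \eqref{eq:alternative-small} we get $\limsup_m\big[(d+1)\widehat{\h}_{\ol{L}}(Y_m)-d\,\widehat{\h}_{\ol{L}}(X)\big]=\widehat{\h}_{\ol{L}}(X)$, hence
\[
\limsup_m\ \sup_{\substack{s\in\mathrm{H}^0(X_{\ol{K}},L_{\ol{K}}^{\otimes n})\\ n\in\NN\setminus\{0\}\\ Y_m\nsubseteq|\div(s)|}}\frac{I_{\ol{L}}(Y_m,s)}{\deg_L(Y_m)}\ \le\ \widehat{\h}_{\ol{L}}(X).
\]
On the other hand, $L$ ample is in particular big and nef, so Proposition~\ref{prop:inequality liminf-essential minimum} applied to the generic net $(Y_m)_m$ gives $\liminf_m\sup_s I_{\ol{L}}(Y_m,s)/\deg_L(Y_m)\ge\e_1^{(d)}(X,\ol{L})$; and $L$ ample is also big and semiample, so Corollary~\ref{cor:Zhangs-ineq} gives $\e_1^{(d)}(X,\ol{L})\ge\widehat{\h}_{\ol{L}}(X)$. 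Chaining these,
\[
\widehat{\h}_{\ol{L}}(X)\ \le\ \e_1^{(d)}(X,\ol{L})\ \le\ \liminf_m\sup_s\frac{I_{\ol{L}}(Y_m,s)}{\deg_L(Y_m)}\ \le\ \limsup_m\sup_s\frac{I_{\ol{L}}(Y_m,s)}{\deg_L(Y_m)}\ \le\ \widehat{\h}_{\ol{L}}(X),
\]
so every term is equal. The middle equality gives $\e_1^{(d)}(X,\ol{L})=\widehat{\h}_{\ol{L}}(X)$, and the outer ones show that $\lim_m\sup_s I_{\ol{L}}(Y_m,s)/\deg_L(Y_m)$ exists and equals $\e_1^{(d)}(X,\ol{L})$, i.e.\ that $(Y_m)_m$ satisfies \eqref{equality liminf-essential minimum}, which for a generic net of $d$-dimensional subvarieties is precisely the $\ol{L}$-smallness of Definition~\ref{definition smallness}. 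There is no serious obstacle here: the argument is a two-sided squeeze assembled from results already proven, and the only points demanding care are the bookkeeping of normalizations (the $d$ versus $d+1$ factors, and $\deg_L(\div(s)\cdot Y)=n\deg_L(Y)$) and the passage from Assumption~\ref{eq:condition} for $(d-1)$-subvarieties to the same inequality for the effective cycle $\div(s)\cdot Y$.
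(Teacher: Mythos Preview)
Your proof is correct and follows essentially the same route as the paper: both arguments bound $\sup_s I_{\ol{L}}(Y_m,s)/\deg_L(Y_m)$ from above via Remark~\ref{correcting integrals as height jumps} combined with Assumption~\ref{eq:condition} and the convergence~\eqref{eq:alternative-small}, and from below via Proposition~\ref{prop:inequality liminf-essential minimum} and Corollary~\ref{cor:Zhangs-ineq}, then squeeze. If anything, your write-up is slightly more explicit about why $\div(s)\cdot Y$ is an effective $(d-1)$-cycle and about the normalization factors, which the paper leaves implicit.
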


\begin{proof}
Assuming $\widehat{\h}_{\ol{L}}(Y_m)$ converges to $\widehat{\h}_{\ol{L}}(X)$,
write
\begin{multline*}
L_{sup}:=\limsup_m\sup_{\substack{s\in\mathrm{H}^0(X_{\ol{K}},L^{\otimes n}_{\ol{K}})\\ n\in\NN\setminus\lbrace0\rbrace\\ Y_m\nsubseteq |\div(s)|}}
\frac{I_{\ol{L}}(Y_m,s)}{\deg_L(Y_m)}\\
=
\limsup_m\sup_{\substack{s\in\mathrm{H}^0(X_{\ol{K}},L^{\otimes n}_{\ol{K}})\\ n\in\NN\setminus\lbrace0\rbrace\\ Y_m\nsubseteq |\div(s)|}}
\big((d+1)\widehat{\h}_{\ol{L}}(Y_m)-d\;\widehat{\h}_{\ol{L}}(\div(s)\cdot Y_m)\big),
\end{multline*}
where the second equality is due to Remark~\ref{correcting integrals as height jumps}.
Since the global height of the $Y_m$'s is independent on the choice of the section, we further get
\[
L_{sup}=\limsup_{m}\bigg((d+1)\;\widehat{\h}_{\ol{L}}(Y_m)-d \inf_{\substack{s\in\mathrm{H}^0(X_{\ol{K}},L^{\otimes n}_{\ol{K}})\\ n\in\NN\setminus\lbrace0\rbrace\\ Y_m\nsubseteq |\div(s)|}}\widehat{\h}_{\ol{L}}(\div(s)\cdot Y_m)\bigg).
\]
By Assumption~\ref{eq:condition}, the interior of the $\limsup$ is bounded above by $(d+1)\;\widehat{\h}_{\ol{L}}(Y_m)-d\;\widehat{\h}_{\ol{L}}(X)$ for every~$m$.
Since $\lim(d+1)\;\widehat{\h}_{\ol{L}}(Y_m)=(d+1)\;\widehat{\h}_{\ol{L}}(X)$ by hypothesis,
we conclude that $L_{\sup}\leq \widehat{\h}_{\ol{L}}(X)$.

On the other hand, let $L_{inf}$ be defined equivalently to $L_{sup}$, replacing the limit superior by a limit inferior. 
Then, since $(Y_m)_m$ is generic, by Proposition~\ref{prop:inequality liminf-essential minimum} and Corollary~\ref{cor:Zhangs-ineq} we have that $L_{inf}\geq\e_1^{(d)}(X,\ol{L})\geq \widehat{\h}_{\ol{L}}(X)$.

Therefore $L_{sup}=L_{inf}=\widehat{\h}_{\ol{L}}(X)=\e_1^{(d)}(X,\ol{L})$, which concludes the proof.
\end{proof}

This proposition illustrates the fact that Theorem~\ref{introthm:equidistribution} contains the equidistribution theorems for positive dimensional varieties present in literature, in particular~\cite{Autissier:2006,Yuan:2008}.

\subsection{Dynamical heights}
Let $L$ be ample.
Let $f\colon X\rightarrow X$ be a surjective morphism such that $L^{\otimes d}\simeq f^*L$ for some integer $d\geq 2$.
We call the triple $(X,f,L)$ an \emph{algebraic dynamical system}.

A result of Zhang~\cite[Theorem~2.2]{Zhang:adelic-metrics} gives the construction of a \emph{canonical metric} associated to $(X,f,L)$.
Fixed an isomorphism $\varphi\colon L^{\otimes d}\xrightarrow{\simeq} f^*L$, there exists a unique semipositive quasi-algebraic metric on $L$
such that $\varphi$ determines an isometry between $\ol{L}^{\otimes d}$ and $f^*\ol{L}$.
When $X$ is a polarized toric variety, choosing $f$ as the extension of the morphism
\[
\Gm^n\longrightarrow \Gm^n,\quad t\rightarrow t^k,
\]
for any choice $k\in\NN$, corresponds to the canonical metric associated to $X$~\cite[\S3.4]{Maillot:GAdvt};
we also refer to~\cite[Proposition-Definition~4.3.15]{Burgos-Philippon-Sombra:2014} for the definition of the canonical metric on~$X$.
On the other hand, when $X$ is an abelian variety and $f$ the multiplication-by-$n$ endomorphism, $n\in\NN_{>1}$, we obtain the Néron-Tate metric on $L$~\cite[\S3]{Zhang:adelic-metrics}.

In the case of such dynamical metrics, every subvariety $Y$ of $X_{\ol{K}}$ has nonnegative height.
Moreover, if $Y$ is a \emph{preperiodic subvariety}, that is $\lbrace f^m(Y),m\in\NN\rbrace$ is finite, then it has height~$0$.
In particular $\mathrm{h}_{\ol{L}}(X)=0$, which automatically guarantees that Assumption~\ref{eq:condition} is always satisfied in the case of dynamical heights.

The dynamical version of Theorem~\ref{equidistribution theorem} amounts to the following.
\begin{theorem}
Let $(X,f,L)$ be an algebraic dynamical system, defining the semipositive quasi-algebraic metrized line bundle~$\ol{L}$.
Let $d=0,\ldots,\dim(X)$, and $(Z_m)_{m}$ be an $L$-generic net of Galois invariant effective $d$-cycles of~$X_{\ol{K}}$.
Write $Z_m=\sum_{i}a_{m,i}\,Y_{m,i}$.
If
\[
\lim_m\,\frac{1}{\deg_L(Z_m)}\sum_ia_{m,i}\sup_{\substack{s\in\mathrm{H}^0(X_{\ol{K}},L_{\ol{K}}^{\otimes n})\\n\in\mathbb{N}\setminus\{0\}\\Y_{m,i}\nsubseteq|\div(s)|}}I_{\ol{L}}(Y_{m,i},s)
=0,
\]
then the weak convergence of measures on~$X_v^{\an}$
\[
\frac{1}{\deg_L(Z_m)}\ \c_1(\ol{L}_v)^{\wedge d}\wedge\delta_{Z_{m,v}^{\an}}
\longrightarrow
\frac{1}{\deg_L(X)}\ \c_1(\ol{L}_v)^{\wedge \dim(X)}
\]
holds for every place~$v\in\fM$.
\end{theorem}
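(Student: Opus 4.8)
The plan is to reduce the statement to Theorem~\ref{equidistribution theorem}. Since $f$ is surjective we have $f(X)=X$, so $X$ is preperiodic and therefore $\widehat{\h}_{\ol{L}}(X)=0$; moreover, as recalled just above, every subvariety of $X_{\ol{K}}$ has nonnegative normalized height, hence so does every effective cycle, which is exactly Assumption~\ref{eq:condition} for all $d$. Granting that $\e_1^{(d)}(X,\ol{L})=0$, the hypothesis of the present theorem says precisely that the defining limit in Definition~\ref{definition smallness} equals $0=\e_1^{(d)}(X,\ol{L})$, that is, that the net $(Z_m)_m$ is $\ol{L}$-small; together with $\e_1^{(d)}(X,\ol{L})=\widehat{\h}_{\ol{L}}(X)$ and the $L$-genericity of $(Z_m)_m$, this places us exactly in the situation of Theorem~\ref{equidistribution theorem}, whose conclusion is the asserted weak convergence of measures on each $X_v^{\an}$.

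It therefore remains to prove $\e_1^{(d)}(X,\ol{L})=0$. The inequality $\e_1^{(d)}(X,\ol{L})\geq\widehat{\h}_{\ol{L}}(X)=0$ is Corollary~\ref{cor:Zhangs-ineq} (our $\ol{L}$ is big and semiample with semipositive quasi-algebraic metric, $L$ being ample). For the opposite inequality I would run a cycle analogue of Proposition~\ref{prop:inequality liminf-essential minimum}. Writing $F(Y):=\sup_s I_{\ol{L}}(Y,s)/\deg_L(Y)$, with $s$ ranging over the sections of powers of $L_{\ol{K}}$ not vanishing on $Y$, then after discarding a negligible tail the quantity appearing in the hypothesis is the convex combination $\sum_i w_{m,i}\,F(Y_{m,i})$ with weights $w_{m,i}=a_{m,i}\deg_L(Y_{m,i})/\deg_L(Z_m)$ summing to $1$ (recall $\deg_L(Y_{m,i})>0$ since $L$ is ample). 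Fix a codimension one closed subset $H\subseteq X_{\ol{K}}$. By $L$-genericity the total weight of the components with $Y_{m,i}\subseteq H$ tends to $0$; by Lemma~\ref{lemma about lower absolute constant} (semiampleness of $L$) there is a real constant $C_{\ol{L}}$ with $F(Y)\geq C_{\ol{L}}$ for every subvariety $Y$ of $X_{\ol{K}}$, hence for those components, whereas $F(Y_{m,i})\geq\inf_{Y\nsubseteq H}F(Y)$ for the remaining ones. Taking $\liminf_m$ gives $0=\lim_m\sum_iw_{m,i}F(Y_{m,i})\geq\inf_{Y\nsubseteq H}F(Y)$, and since $H$ was arbitrary, the description of the essential minimum in Proposition~\ref{prop:equivalent-def-ess-minimum} (with $H_0=\emptyset$, as $L$ is ample) yields $\e_1^{(d)}(X,\ol{L})\leq 0$.

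Combining the two bounds gives $\e_1^{(d)}(X,\ol{L})=\widehat{\h}_{\ol{L}}(X)=0$, which finishes the proof by the reduction of the first paragraph. The genuinely delicate point is the upper bound $\e_1^{(d)}(X,\ol{L})\leq 0$: in contrast with the $0$-dimensional case, for $d\geq1$ one cannot derive it from a Zariski dense family of height zero (that is, preperiodic) $d$-dimensional subvarieties, since a sufficiently general polarized endomorphism may possess none; the argument is thus forced to extract the bound from the hypothesis of the theorem itself. In the classical cases where such a dense family does exist (for instance $X$ toric or abelian) one obtains $\e_1^{(d)}(X,\ol{L})=0$ unconditionally by applying Proposition~\ref{prop:autissier} to a generic net of preperiodic $d$-dimensional subvarieties.
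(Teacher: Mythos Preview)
Your proof is correct, and in fact more complete than what the paper provides. The paper offers no proof at all: it simply writes ``The dynamical version of Theorem~\ref{equidistribution theorem} amounts to the following'' and states the result. Read literally, this leaves a gap, since to invoke Theorem~\ref{equidistribution theorem} as a black box one needs both $\e_1^{(d)}(X,\ol{L})=\widehat{\h}_{\ol{L}}(X)$ and $\ol{L}$-smallness of the net, and in the dynamical setting both reduce to knowing $\e_1^{(d)}(X,\ol{L})=0$. The paper does not establish this in general; the subsequent Proposition~\ref{prop:suf-cond} only supplies sufficient conditions (density of height-zero $d$-dimensional subvarieties).

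You close this gap by proving, in effect, a cycle analogue of the inequality in Proposition~\ref{prop:inequality liminf-essential minimum}: for any $L$-generic net of effective $d$-cycles, the limit inferior of the weighted average $\sum_i w_{m,i}F(Y_{m,i})$ dominates $\inf_{Y\nsubseteq H}F(Y)$ for every codimension-one $H$, hence dominates $\e_1^{(d)}(X,\ol{L})$. Your splitting argument (components contained in $H$ bounded below via Lemma~\ref{lemma about lower absolute constant}, the rest by $\inf_{Y\nsubseteq H}F(Y)$, with $L$-genericity annihilating the weight of the former) is exactly the right extension of the paper's argument for subvarieties, and it correctly extracts $\e_1^{(d)}(X,\ol{L})\leq 0$ from the hypothesis of the theorem itself. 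An alternative the authors may have had in mind is simply to rerun the proof of Theorem~\ref{equidistribution theorem} with $0$ substituted for $\e_1^{(d)}(X,\ol{L})$ throughout, since that proof only uses the upper bound in \eqref{starting inequality in the proof of equidistribution} together with the value of $\widehat{\h}_{\ol{L}}(X)$; your route has the advantage of treating the main theorem as a black box and, as you note in your final remark, of isolating precisely where the dynamical hypothesis does real work.
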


In this setting, it is easy to give sufficient conditions for which the hypotheses of the equidistribution theorem are satisfied, namely $\e_1^{(d)}(X,\ol{L})=\widehat{\h}_{\ol{L}}(X)=0$.

\begin{proposition}\label{prop:suf-cond}
Let $(X,f,L)$ be an algebraic dynamical system, $\ol{L}$ be its associated canonical height, and $d =0,\dots,\dim(X)$.
Assume that the  $d$-dimensional subvarieties of $X_{\ol{K}}$ having height equal to $0$ are dense in~$X_{\ol{K}}$.
Then $\e^{(d)}_1(X,\overline{L})=0$.

In particular, the equality holds if $(X,\ol{L})$ is a polarized toric variety endowed with the canonical metric, or a polarized abelian variety together with the associated Néron-Tate metric.
\end{proposition}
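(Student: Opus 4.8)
The plan is to establish the two inequalities $\e_1^{(d)}(X,\ol{L})\ge 0$ and $\e_1^{(d)}(X,\ol{L})\le 0$ for the general statement, and then to exhibit the dense families that make the hypothesis applicable in the toric and abelian cases. The lower bound is free: in a dynamical system $\h_{\ol{L}}(X)=0$, hence $\widehat{\h}_{\ol{L}}(X)=0$, and \hyperref[cor:Zhangs-ineq]{Corollary~\ref*{cor:Zhangs-ineq}} gives $\e_1^{(d)}(X,\ol{L})\ge\widehat{\h}_{\ol{L}}(X)=0$.

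For the upper bound I would use the description of $\e_1^{(d)}(X,\ol{L})$ in \hyperref[prop:equivalent-def-ess-minimum]{Proposition~\ref*{prop:equivalent-def-ess-minimum}}, noting that since $L$ is ample it is big and nef and, by the Nakai--Moishezon criterion, the exceptional set $H_0$ may be taken empty. Fix a closed subset $H\subseteq X_{\ol{K}}$ of codimension one. By the density hypothesis there is a $d$-dimensional subvariety $Y\nsubseteq H$ with $\h_{\ol{L}}(Y)=0$. For every $s\in\mathrm{H}^0(X_{\ol{K}},L_{\ol{K}}^{\otimes n})$ with $Y\nsubseteq|\div(s)|$, the divisor $\div(s)$ is effective (it is the divisor of zeros of a global section), so $\div(s)\cdot Y$ is an effective $(d-1)$-cycle; since every subvariety of $X_{\ol{K}}$ has nonnegative $\ol{L}$-height, multilinearity gives $\h_{\ol{L}}(\div(s)\cdot Y)\ge 0$. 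Hence, by \hyperref[correcting integrals as height jumps]{Remark~\ref*{correcting integrals as height jumps}}, $I_{\ol{L}}(Y,s)=\h_{\ol{L}}(Y)-n^{-1}\h_{\ol{L}}(\div(s)\cdot Y)\le 0$, so $\sup_s I_{\ol{L}}(Y,s)/\deg_L(Y)\le 0$ and the infimum over such $Y$ is $\le 0$. As $H$ is arbitrary, \hyperref[prop:equivalent-def-ess-minimum]{Proposition~\ref*{prop:equivalent-def-ess-minimum}} yields $\e_1^{(d)}(X,\ol{L})\le 0$, proving the first assertion; for $d=0$ this is the classical vanishing of the essential minimum when the points of height zero are dense.

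For a polarized toric variety with its canonical metric, the $d$-dimensional translates of subtori by torsion points are preperiodic, hence of height zero (alternatively, use the explicit computations of \cite{Burgos-Philippon-Sombra:2014}), and they form a dense family: for $d=0$ these are the torsion points; for $d=\dim(X)$ one takes $Y=X$; and for $0<d<\dim(X)$ one fixes a $d$-dimensional subtorus $T'$ and observes that the Zariski closure of the union of its torsion translates is stable under translation by the dense set of torsion points, hence under the whole torus, hence equals $X$. Thus the hypothesis of the first part holds.

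For a polarized abelian variety $X$ with the Néron--Tate metric, the same argument works when $d=0$, when $d=\dim(X)$, or when $X$ carries a $d$-dimensional abelian subvariety; the real obstacle is the case $0<d<\dim(X)$ with $X$ simple, where there is no $d$-dimensional subvariety of height zero and the density hypothesis literally fails. I would handle this by replacing height-zero subvarieties with small-height ones: fix a $d$-dimensional subvariety $W$ and, for each $n\ge 1$, an irreducible component $W'$ of $[n]^{-1}(W)$; the isometry $[n]^{*}\ol{L}\simeq\ol{L}^{\otimes n^{2}}$ together with the projection formula for heights and degrees under the isogeny $[n]$ yields $\widehat{\h}_{\ol{L}}(W')=n^{-2}\widehat{\h}_{\ol{L}}(W)$, and the torsion translates of the components of $[n]^{-1}(W)$ have this same height and form a dense family, their union being stable under all torsion translations, hence under $X$. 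Therefore, given a codimension-one closed $H$ and $\varepsilon>0$, choosing $n$ with $n^{-2}\widehat{\h}_{\ol{L}}(W)<\varepsilon$ and then such a translate $Y\nsubseteq H$, one gets $\sup_s I_{\ol{L}}(Y,s)/\deg_L(Y)\le(d+1)\widehat{\h}_{\ol{L}}(Y)<(d+1)\varepsilon$, and \hyperref[prop:equivalent-def-ess-minimum]{Proposition~\ref*{prop:equivalent-def-ess-minimum}} forces $\e_1^{(d)}(X,\ol{L})\le 0$; equivalently, since Assumption~\ref{eq:condition} is automatic in the dynamical setting, a generic net extracted from these subvarieties can be fed into \hyperref[prop:autissier]{Proposition~\ref*{prop:autissier}}. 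The point that needs care is the height identity $\widehat{\h}_{\ol{L}}(W')=n^{-2}\widehat{\h}_{\ol{L}}(W)$, which rests on the projection formula for $[n]$.
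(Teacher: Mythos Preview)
Your argument for the general statement is correct and essentially identical to the paper's: the lower bound from \hyperref[cor:Zhangs-ineq]{Corollary~\ref*{cor:Zhangs-ineq}}, and the upper bound by choosing, for each codimension-one $H$, a height-zero $Y\nsubseteq H$ and using nonnegativity of $\h_{\ol{L}}(\div(s)\cdot Y)$ together with \hyperref[correcting integrals as height jumps]{Remark~\ref*{correcting integrals as height jumps}} and \hyperref[prop:equivalent-def-ess-minimum]{Proposition~\ref*{prop:equivalent-def-ess-minimum}}.

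Where you diverge from the paper is in the treatment of the ``in particular'' clause. The paper's proof stops after the general statement and does not justify the density hypothesis in the two named examples. You correctly observe that for a simple abelian variety (or, more generally, one lacking a $d$-dimensional abelian subvariety) there are \emph{no} $d$-dimensional subvarieties of N\'eron--Tate height zero when $0<d<\dim(X)$, by the Ullmo--Zhang theorem; so the density hypothesis literally fails and the first part does not apply. Your workaround---taking irreducible components $W'$ of $[n]^{-1}(W)$, using the projection formula for the isogeny $[n]$ together with $[n]^{*}\ol{L}\simeq\ol{L}^{\otimes n^{2}}$ to obtain $\widehat{\h}_{\ol{L}}(W')=n^{-2}\widehat{\h}_{\ol{L}}(W)$, and then spreading out by torsion translates (which preserve $\widehat{\h}_{\ol{L}}$ since $[m](Y+\tau)=[m]Y$ forces $\widehat{\h}_{\ol{L}}(Y+\tau)=\widehat{\h}_{\ol{L}}(Y)$)---is sound and yields $\e_1^{(d)}(X,\ol{L})\le(d+1)\varepsilon$ for every $\varepsilon>0$. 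This is genuinely more careful than the paper's presentation; effectively you are noting that the abelian case is an application of \hyperref[prop:autissier]{Proposition~\ref*{prop:autissier}} rather than of the present proposition.
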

\begin{proof}
By Corollary~\ref{cor:Zhangs-ineq}, we have that $\e_1^{(d)}(X,\ol{L})\geq 0$.
On the other hand, for every closed subset $H\subseteq X_{\ol{K}}$ of codimension~$1$, we can find a $d$-dimensional subvariety $Y_H$ of height~$0$ such that $Y_H\nsubseteq H$.
Therefore, by Proposition~\ref{prop:equivalent-def-ess-minimum},
\[
\e_1^{(d)}(X,\ol{L})\leq \sup_H \sup_s \big((d+1)\widehat{\h}_{\ol{L}}(Y_H)-d\widehat{\h}_{\ol{L}}(\div(s)\cdot Y_H)\big)
\leq 0,
\]
where the second inequality follows from the fact that $\h_{\ol{L}}(Y_H)=0$ and $\h_{\ol{L}}(Z)\geq 0$ for every effective cycle on~$X$.
This concludes the proof.
\end{proof}

\begin{remark}
The case of semiabelian varieties is of particular interest.
By taking a canonical height associated to a fixed semiabelian variety~$X$,
that is, associated to a semipositive quasi-algebraic metrized line bundle~$\ol{L}$ on $X$ as for instance in~\cite[\S4]{Chambert-Loir:1999},
one sees that $\e_1^{(0)}(X,\ol{L})=0$ by the proposition above.

The study of higher dimensional essential minima is far more difficult, starting by the fact that already in the case of an abelian variety one cannot expect there to be algebraic subgroups of every dimension (to then apply Proposition~\ref{prop:suf-cond}).
Let $(Y_m)_m$ be a generic net of $d$-dimensional subvarieties of $X_{\ol{K}}$.

In the case when $X$ is split (that is, isogenous to a product of a torus and an abelian variety), we have that $\widehat{\h}_{\ol{L}}(X)=0$.
Hence, if
\[
\sup_s \frac{I(Y_m,s)}{\deg_L(Y_m)}\longrightarrow 0,
\]
then $(Y_m)_m$ equidistributes in the sense of Theorem~\ref{introthm:equidistribution}.

In the case when $X$ is not split, the techniques developed by K\"uhne in~\cite{Kuhne:2018} are specially helpful for determining expected equidistribution.
The idea is to look at the whole isogeny class of $X$ instead of merely $X$ itself.
Following the discussion in \S3 of \emph{loc.~cit.}, one can choose a sequence of pairs $(X_n,\ol{L}_n)$ such that $X_n$ is isogenous to $X$ and $\ol{L}_n$ defines a canonical metric on $X_n$ such that $\h_{\ol{L}_n}(X_n)\rightarrow 0$.
In particular, if we denote by $Y_{m,n}$ the image of $Y_m$ in $X_n$,
\[
\lim_{m,n}\sup_s \frac{I(Y_{m,n},s)}{\deg_L(Y_{m,n})}\longrightarrow 0
\]
is a sufficient condition for the net $(Y_m)_m$ to equidistribute in the sense of Theorem~\ref{introthm:equidistribution},
giving an example of higher dimensional equidistribution without assuming necessarily $e_1^{(d)}(X,\ol{L})=\widehat{h}_{\ol{L}}(X)$.
\end{remark}

%
%
%
%
%
%
%
%
%
%
%
%
%
%
%
%
%
%
%
%
%
%
%
%
%
%
%
%
%
%
%
%
%
%
%
%
%
%

\bibliographystyle{amsalpha}
\bibliography{biblio}

\end{document}